\newtheorem{thm}{Theorem}[section]
\newtheorem{cor}[thm]{Corollary}
\newtheorem{lm}[thm]{Lemma}
\newtheorem{prop}[thm]{Proposition}
\newtheorem{defn} [thm]{Definition}
\newtheorem{rem}[thm]{Remark}
\numberwithin{equation}{section}
\newcommand{\tr}{\mathop{tr}}
\newcommand{\cM}{{\mathcal M}}
\newcommand{\cN}{{\mathcal N}}
\newcommand{\cA}{{\mathcal A}}
\begin{document}

\date{\today}

\title[Notes on derivations of Murray--von Neumann algebras]{Notes on derivations of Murray--von Neumann algebras}

\author[Ber]{Aleksey Ber}
\address{ Department of Mathematics, National University of Uzbekistan, Vuzgorodok,
100174, Tashkent, Uzbekistan}
\email{aber1960@mail.ru}

\author[Kudaybergenov]{Karimbergen Kudaybergenov}
\address{ Department of Mathematics, Karakalpak State University, Ch. Abdirov 1, Nukus 230113, Uzbekistan}
\email{karim2006@mail.ru}

\author[Sukochev]{Fedor Sukochev}
\address{ School of Mathematics and Statistics, University of New South Wales,
Kensington, 2052, Australia}
\email{f.sukochev@unsw.edu.au}

\begin{abstract}
Let $\mathcal{M}$ be  a type II$_1$ von Neumann factor and let $S(\mathcal{M})$ be the associated Murray-von Neumann algebra of all measurable operators affiliated to $\mathcal{M}.$   We extend a result of Kadison and Liu \cite{KL} by showing that any derivation from $S(\mathcal{M})$ into an $\mathcal{M}$-bimodule $\mathcal{B}\subsetneq S(\mathcal{M})$ is trivial. In the special case, when $\mathcal{M}$ is the hyperfinite type  II$_1-$factor $\mathcal{R}$, we introduce the algebra $AD(\mathcal{R})$, a noncommutative analogue of the algebra of all almost everywhere approximately differentiable functions on $[0,1]$ and show that it is a proper subalgebra of $S(\mathcal{R})$. This algebra is strictly larger than the corresponding ring of continuous geometry introduced by von Neumann. Further, we establish that the classical approximate derivative on (classes of) Lebesgue measurable functions on $[0,1]$ admits an extension to a derivation from $AD(\mathcal{R})$ into $S(\mathcal{R})$, which fails to be spatial. Finally, we show that for a Cartan masa $\mathcal{A}$ in a hyperfinite II$_1-$factor $\mathcal{R}$ there exists a derivation $\delta$ from $\mathcal{A}$ into $S(\mathcal{A})$ which does not admit an extension up to a derivation from $\mathcal{R}$ to $S(\mathcal{R}).$

\end{abstract}

\dedicatory{To the memory of Richard Kadison}
\subjclass[2010]{47B47, 46L51, 46L57}
\keywords{von Neumann algebra, hyperfinite factor, derivation, commutator}

\maketitle

\section{Introduction}\label{Intro}

Let
$\mathcal{A}$ be an algebra over the field of complex numbers and $\mathcal{B}$ be an $\mathcal{A}$ -bimodule. A linear operator $D:\mathcal{A}\rightarrow \mathcal{B}$
is called a \textit{derivation} if it satisfies
the identity $D(xy)=D(x)y+xD(y)$ for all  $x, y\in \mathcal{A}$. Each element  $a\in \mathcal{B}$ defines a linear
derivation $\textrm{ad}_a:D:\mathcal{A}\rightarrow \mathcal{B}$ given by $\textrm{ad}_a(x)=ax-xa,\,x\in
\mathcal{A}$.  Such derivations $\textrm{ad}_a$ are called \textit{spatial derivations}. If the element  $a$ implementing the derivation
$\textrm{ad}_a$ belongs to $\mathcal{A},$ then $\textrm{ad}_a$ obviously maps $\mathcal{A}$ into itself and is called  \textit{inner derivation} (of the algebra $\mathcal{A}$).

The theory of derivations in operator algebras is an important and well studied part of the general theory
 of operator algebras, with applications in mathematical physics (see, e.g.~\cite{Bra}, \cite{Sak2}). It is well known that every derivation
of a
  $C^{\ast}$-algebra is bounded (i.e. is norm continuous), and that every derivation of a von
 Neumann algebra is  inner.
 For  a detailed exposition of the theory of bounded derivations we refer to the monograph of Sakai~\cite{Sak2}.

The development of a non-commutative integration theory was initiated by Segal \cite{Seg.}, who introduced new classes of (not necessarily Banach) algebras of unbounded operators, in particular the algebra  $S(\mathcal{M})$ of all measurable operators affiliated with a von Neumann algebra $\mathcal{M}$ (see next section for precise definitions).

The properties of derivations of the
algebra $S(\mathcal{M})$ are far from being similar to those exhibited
by derivations on von Neumann algebras.
On one hand, for commutative von Neumann algebra $\mathcal{M}=L_\infty [0,1]$, the algebra $S(\mathcal{M})$  coincides with Lebesgue space  $S[0, 1]$ of all  measurable complex functions on
the interval $[0, 1]$, and the latter algebra admits non trivial (and hence, non-inner) derivations \cite{BSCh04, BSCh06}.
On the other hand, if $\mathcal{M}$ is a properly infinite von Neumann algebra, then all derivations on $S(\mathcal{M})$ are inner (\cite[Theorem 2.7]{AAK}, \cite[Theorem 4.17 and Proposition 5.3]{BdPS}, \cite[Corollary 5.1]{BSCh13} and \cite[Corollary 4.2]{BCS14}).
These two dramatically different results indicate a special interest (and difficulty) in the case when $\mathcal{M}$ is a type II$_1-$von Neumann algebra, and this is precisely the case in which we are interested in this paper. In this case, $S(\mathcal{M})$ is the algebra of all operators affiliated with $\mathcal{M}$, which is sometimes referred to as the Murray-von Neumann algebra associated with $\mathcal{M}$ (see e.g. \cite{KL}). It is still unknown whether the algebra $S(\mathcal{M})$ admits non-inner derivations. To our best knowledge, the question whether every derivation on $S(\mathcal{M})$ is necessarily inner was firstly posed in \cite{Ayupov}.
A partial step towards proving that $S(\mathcal{M})$ may not admit any non-inner derivations was made by  Kadison and Liu \cite{KL} who showed that any derivation from $S(\mathcal{M})$ into $\mathcal{M}$ is necessarily trivial when $\mathcal{M}$ is a von Neumann algebra of type II$_1$.
In fact, it is conjectured in \cite[p.211]{KL} that  $S(\mathcal{M})$ does not admit non-inner derivations in this setting. In this paper, we partially confirm this conjecture by showing that any derivation from the Murray-von Neumann algebra $S(\mathcal{M})$ associated with any type II$_1$ von Neumann algebra $\mathcal{M}$,  with values in a Calkin operator space $\mathfrak{B}\subsetneq S(\mathcal{M})$ is necessarily trivial (see Theorem~\ref{KL-new}).

The result of \cite{KL} cited above corresponds to the very special case $\mathfrak{B}=\mathcal{M}$. It is worthwhile to point out that if $\mathcal{M}$ is a type II$_1-$ factor, then every $\mathcal{M}$-bimodule $\mathfrak{B}\subseteq S(\mathcal{M})$ is automatically a Calkin operator space. In other words in this special case our result states that every derivation from  $S(\mathcal{M})$ into any $\mathcal{M}$-bimodule  distinct from $S(\mathcal{M})$ is trivial  (see Corollary~\ref{cor_ext_KL}). Our proof is based on an entirely different approach to that of \cite{KL}, and appears to be of interest in its own right.

The second part of the paper is concerned with extensions of derivations initially defined on abelian subalgebras $\mathcal{A}$, of a type II$_1$ von Neumann algebra $\mathcal{M}$.  Here, we concentrate on the special case where $\mathcal{M}$ coincides with the hyperfinite type II$_1$ factor $\mathcal{R}$, and $\mathcal{A}$ coincides with a special Cartan masa in $\mathcal{R}$, the ``diagonal'' subalgebra $\mathcal{D}$ of $\mathcal{R}$.  The algebra $\mathcal{D}$ is $*$-isomorphic to the algebra $L_\infty[0,1]$, and therefore, there exists a $*$-subalgebra $AD(\mathcal{D})\subset S(\mathcal{R})$, which is $*$-isomorphic to the classical $*$-subalgebra of all  almost everywhere approximately differentiable function of $S[0,1]$ (see Section \ref{sec_AD_intro} for precise definitions). Next, we construct a noncommutative analogue $AD(\mathcal{R})$, generalising the algebra $AD(\mathcal{D})$ of \lq\lq approximately differentiable operators\rq\rq in $S(\mathcal{R})$, and show that this algebra admits a derivation, which extends the approximate derivation on $AD(\mathcal{D})$ (see Theorem \ref{b main thm}).
The $\ast$-algebra $AD(\mathcal{R})$ contains as a proper $\ast$-subalgebra  the regular ring  $C_\infty$ of continuous geometry for $\mathbb{C},$
constructed by J. von Neumann as a completion  in the rank-metric of a sum of an increasing sequence of matrix rings over the field of complex numbers. Continuous geometry was developed by J. von Neumann in the period  1935-37 in his series of article consisting of five papers (see e.g. \cite{Neugeo, Neu62}). In particular, the notion of rank-distance was firstly defined in \cite{Neu37} (see also \cite[pp. 160-161]{Neu62}), and described by von Neumann as \lq\lq a really significant topology\rq\rq (see \cite[p. 137]{Neu62}). This topology also plays a crucial role in our construction of the algebra $AD(\mathcal{R}).$  It is of interest to observe that the properties of this topology also play an important role in our extension of the Kadison-Liu result from \cite{KL}, described beforehand.

Finally, in the last section of this paper we show that there exists a derivation $\delta$ from a Cartan masa $\mathcal{A}$ of $\mathcal{R}$ with values in $S(\mathcal{A})$, which cannot be extended to a derivation from $\mathcal{R}\to S(\mathcal{R})$. This derivation $\delta$ is nothing fancy, in fact it is a twisted version of the approximate derivation on $AD(\mathcal{D})$  which fails to have an extension  up to a derivation on $S(\mathcal{R})$ (see Theorem \ref{thm_nonext_diagonal}). The crucial result in this proof is \cite[Theorem 1.2]{BSZ} (restated below as Theorem \ref{com}), which states that the identity of the algebra $\mathcal{M}$ can not be written as commutator  $[a,b]$ with $a,b\in S(\mathcal{M})$ if one of the elements $a$ or $b$ is normal.

The paper is organized as follows. In Section \ref{sec2} we gather necessary preliminaries. Section \ref{sec6} is devoted to the Kadison-Liu conjecture.

In Section~\ref{sect1} we prove that the largest subalgebra of $S[0,1]$ which admits a unique extension of the classical derivation $\frac{d}{dt}$ on $[0,1]$ is the algebra $AD[0,1]$ of all approximately differentiable functions.

In Section~\ref{der_on_factors}, for a  hyperfinite factor $\mathcal{R}$ of type II$_1$, we construct a dense (with respect to the measure topology) $\ast$-regular (in the sense von Neumann) subalgebra $AD(\mathcal{R})$ in $S(\mathcal{R}).$ This algebra contains  a $*$-subalgebra  $\ast$-isomorphic to
the algebra $AD[0,1]$ and can be viewed as a noncommutative analogue of approximately differentiable functions. We prove that the approximate derivative on $\partial_{AD}:AD[0,1]\to S[0,1]$ can be extended up to a derivation $\delta_{AD}:AD(\mathcal{R})\to S(\mathcal{R})$, and that this derivation is not spatial.

In Section~\ref{sec5} we show that a twisted version of the approximate derivative on the algebra $S(\mathcal{D})$ cannot be extended up to a derivation on the whole algebra $S(\mathcal{R})$, and prove a similar result for an arbitrary
Cartan masa $\mathcal{A}$ in the hyperfinite II$_1-$factor $\mathcal{R}$.

\subsection*{Acknowledgement} The authors thank Dmitriy Zanin and Galina Levitina for useful discussions and comments on earlier versions of the present paper and Jinghao Huang and Thomas Scheckter for careful reading of the manuscript and supplying useful feedback. We also thank Kenneth Dykema for discussion of Cartan subalgebras in the hyperfinite II$_1$-factor. Some results of Section~\ref{sect1} were presented by the first named author at Crimea Autumn Mathematical School KROMSH-{2005}.

\section{Preliminaries}\label{sec2}

In this section we briefly list some necessary facts concerning algebras of measurable operators.

Let $H$  be a Hilbert space and let $B(H)$ be the $\ast$--algebra of all bounded linear operators
on $H.$ A von Neumann algebra $\mathcal{M}$  is a weakly closed unital $\ast$-subalgebra in $B(H).$ For details on von Neumann algebra theory, the reader is
 referred to \cite{Dixmier, KR1, KRII, SS, Tak}. General facts concerning measurable operators may be found in \cite{Nel,Seg.} (see also \cite[Chapter IX]{Tak2} and the forthcoming book \cite{DPS}). For convenience of the reader, some of the basic definitions are recalled below.

\subsection{The Murray-von Neumann algebra}

A densely defined closed linear operator $x : \textrm{dom}(x) \to  H$
(here the domain $\textrm{dom}(x)$ of $x$ is a linear subspace in $H$) is said to be \textit{affiliated} with $\mathcal{M}$
if $yx \subset  xy$ for all $y$ from the commutant $\mathcal{M}'$  of the algebra $\mathcal{M}.$

Recall that two projections $e, f \in  \mathcal{M}$ are called \textit{equivalent} if there exists an element
$u \in \mathcal{M}$ such that $u^\ast  u = e$ and $u u^\ast  = f.$
A projection $p \in \mathcal{M}$ is called \textit{finite}, if the
conditions $q \leq  p$ and $q$ is equivalent to $p$ (denoted by $p\sim q$)  imply that $q = p.$
A linear operator $x$ affiliated with $\mathcal{M}$ is called \textit{measurable} with respect to $\mathcal{M}$ if
$\chi_{(\lambda,\infty)}(|x|)$ is a finite projection for some $\lambda>0.$ Here
$\chi_{(\lambda,\infty)}(|x|)$ is the  spectral projection of $|x|$ corresponding to the interval $(\lambda, +\infty).$
We denote the set of all measurable operators by $S(\mathcal{M}).$
Clearly, $\mathcal{M}$  is a subset of $S(\mathcal{M}).$

Let $x, y \in  S(\mathcal{M}).$ It is well known that $x+y$ and
$xy$ are densely-defined and preclosed
operators. Moreover, the (closures of) operators $x + y, xy$ and $x^\ast$  are also in $S(\mathcal{M}).$
When
equipped with these operations, $S(\mathcal{M})$ becomes a unital $\ast$-algebra over $\mathbb{C}$  (see [19]). It
is clear that $\mathcal{M}$  is a $\ast$-subalgebra of $S(\mathcal{M}).$

For a self-adjoint $x\in S(\mathcal{M})$ we denote by $x_+$ (respectively, $x_-$) its posiitve (respectively negative part), defined by $x_+=\frac{x+|x|}{2}$ (respectively, $x_-=\frac{x-|x|}{2}$). We note that $x_-$ and $x_+$ are orthogonal, that is $x_-x_+=0$.

If, for example, if $\mathcal{M}$ is finite, then every operator affiliated with $\mathcal{M}$ becomes measurable.
In particular, the set of all affiliated operators forms a $\ast$-algebra, which coincides with
$S(\mathcal{M}).$
Following \cite{KL, KLT}, in the case when von Nemaunn algebra $\mathcal{M}$ is finite, we refer to the algebra $S(\mathcal{M})$ as the Murray-von Neumann algebra associated with $\mathcal{M}$.

Let $\tau$ be a faithful normal finite trace on $\mathcal{M}.$
Consider the topology  $t_{\tau}$ of convergence in measure or \textit{measure topology}
on $S(\mathcal{M}),$ which is defined by
the following neighborhoods of zero:
$$
N(\varepsilon, \delta)=\{x\in S(\mathcal{M}): \exists \, e\in P(\mathcal{M}), \, \tau(\mathbf{1}-e)\leq\delta, \, xe\in
\mathcal{M}, \, \|xe\|_\infty\leq\varepsilon\},
$$
where $\varepsilon, \delta$
are positive numbers,    $\mathbf{1}$ is the unit in $\mathcal{M}$ and $\|\cdot\|_\infty$ denotes the operator norm on
$\mathcal{M}.$  The algebra $S(\mathcal{M})$ equipped with the measure topology is a topological algebra.

We also recall the following result (see e.g. \cite[Proposition 3.3]{dePS_flow})
\begin{prop}\label{prop_trace_pres_isom}
Let $\mathcal{M}$ and $\mathcal{N}$ be von Neumann algebras equipped with faithful normal finite traces. If $\alpha:\mathcal{M}\to \mathcal{N}$ is a $*$-isomorphisms which preserves the trace. Then, $\alpha$ extends up to a $*$-isomorphism of $S(\mathcal{M})$ and $S(\mathcal{N})$, which is also continuous in the measure topology.
\end{prop}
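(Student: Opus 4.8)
The plan is to realize $S(\mathcal{M})$ as the completion of $\mathcal{M}$ in the measure topology $t_\tau$, extend $\alpha$ by continuity, and then check that the extension inherits all the required structure. I would first recall two classical facts about $t_\tau$ (see \cite{Nel,Seg.}, \cite[Chapter IX]{Tak2}): $\mathcal{M}$ is $t_\tau$-dense in $S(\mathcal{M})$ and $(S(\mathcal{M}),t_\tau)$ is complete, so that $S(\mathcal{M})$ is, up to the canonical identification, the completion of $(\mathcal{M},t_\tau|_{\mathcal{M}})$; and the operations $(x,y)\mapsto x+y$, $(x,y)\mapsto xy$ and $x\mapsto x^\ast$ are $t_\tau$-continuous. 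The same applies to $\mathcal{N}$ and $S(\mathcal{N})$.

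The key step is to show that $\alpha$ sends the basic $t_\tau$-neighborhoods of $0$ in $\mathcal{M}$ into the corresponding ones in $\mathcal{N}$. Write $\tau_\mathcal{M},\tau_\mathcal{N}$ for the two traces and $N_\mathcal{M}(\varepsilon,\delta),N_\mathcal{N}(\varepsilon,\delta)$ for the associated neighborhoods. Suppose $x\in\mathcal{M}$ and $e\in P(\mathcal{M})$ witnesses $x\in N_\mathcal{M}(\varepsilon,\delta)$, i.e. $\tau_\mathcal{M}(\mathbf 1-e)\le\delta$, $xe\in\mathcal{M}$ and $\|xe\|_\infty\le\varepsilon$. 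Then $\alpha(e)\in P(\mathcal{N})$; since $\alpha$ preserves the trace, $\tau_\mathcal{N}(\mathbf 1-\alpha(e))=\tau_\mathcal{M}(\mathbf 1-e)\le\delta$; and since a $\ast$-isomorphism of $C^\ast$-algebras is isometric, $\|\alpha(x)\alpha(e)\|_\infty=\|\alpha(xe)\|_\infty=\|xe\|_\infty\le\varepsilon$. Hence $\alpha\bigl(N_\mathcal{M}(\varepsilon,\delta)\cap\mathcal{M}\bigr)\subseteq N_\mathcal{N}(\varepsilon,\delta)\cap\mathcal{N}$, so $\alpha$ is continuous, and being additive it is uniformly continuous, as a map $(\mathcal{M},t_\tau)\to(\mathcal{N},t_\tau)$.

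The rest is formal. By completeness of $S(\mathcal{N})$ and $t_\tau$-density of $\mathcal{M}$ in $S(\mathcal{M})$, $\alpha$ extends uniquely to a $t_\tau$-continuous map $\bar\alpha\colon S(\mathcal{M})\to S(\mathcal{N})$. Applying the same construction to the trace-preserving $\ast$-isomorphism $\alpha^{-1}\colon\mathcal{N}\to\mathcal{M}$ gives a continuous extension $\overline{\alpha^{-1}}$; the compositions $\overline{\alpha^{-1}}\circ\bar\alpha$ and $\bar\alpha\circ\overline{\alpha^{-1}}$ are continuous extensions of the identity maps of $\mathcal{M}$ and $\mathcal{N}$, hence equal $\mathrm{id}_{S(\mathcal{M})}$ and $\mathrm{id}_{S(\mathcal{N})}$ by uniqueness, so $\bar\alpha$ is a $t_\tau$-homeomorphism. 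To see that $\bar\alpha$ is a $\ast$-homomorphism, I would note that the maps $(x,y)\mapsto\bar\alpha(xy)$ and $(x,y)\mapsto\bar\alpha(x)\bar\alpha(y)$ on $S(\mathcal{M})\times S(\mathcal{M})$ are $t_\tau$-continuous and agree on the dense set $\mathcal{M}\times\mathcal{M}$, hence coincide, and likewise for additivity, scalar multiplication and the involution.

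I do not anticipate a serious obstacle: the only non-formal inputs are the density and completeness of $(S(\mathcal{M}),t_\tau)$ and the joint $t_\tau$-continuity of multiplication, both classical, so the ``main difficulty'' is essentially bookkeeping. If one prefers to avoid invoking completeness of $S(\mathcal{M})$, an alternative is to construct $\bar\alpha$ directly via the Borel functional calculus: for self-adjoint $x\in S(\mathcal{M})$ with spectral measure $E^x$, the projection-valued measure $B\mapsto\alpha(E^x(B))$ lives in $\mathcal{N}$ (using that a $\ast$-isomorphism of von Neumann algebras is normal) and has $\tau_\mathcal{N}$-finite tails by trace-preservation, hence is the spectral measure of a unique self-adjoint element of $S(\mathcal{N})$, which one declares to be $\bar\alpha(x)$; one then sets $\bar\alpha(a+ib)=\bar\alpha(a)+i\bar\alpha(b)$ for $a=a^\ast,b=b^\ast$ and verifies the algebraic identities on spectral resolutions. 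Since the completion argument is shorter, that is the one I would present.
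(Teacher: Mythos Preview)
The paper does not give its own proof of this proposition; it is stated as a recalled fact with a reference to \cite[Proposition~3.3]{dePS_flow}. Your completion argument is correct and is exactly the standard way to establish the result: showing that $\alpha$ carries the basic $t_\tau$-neighborhoods $N_{\mathcal{M}}(\varepsilon,\delta)\cap\mathcal{M}$ into $N_{\mathcal{N}}(\varepsilon,\delta)\cap\mathcal{N}$ via trace-preservation and isometry, then extending by density and completeness. There is nothing to compare against in the paper itself.
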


If  $m$ denotes Lebesgue measure on the interval $[0,1]$, and
if we consider $L^\infty(m)$ as an Abelian von Neumann algebra
acting via multiplication on the Hilbert space   ${\mathcal
H}=L^2(m)$, with the trace given by integration with respect to
$m$, then $S(L^\infty(m)) $ consists of all measurable functions on
$[0,1]$ which are bounded except on a set of finite measure.
In other words, the algebra $S(L^\infty(m))$ coincides with the space $S(0,1)$ of all a.e. finite Lebesgue measurable functions on $[0,1]$ (and we will keep the later notation for this algebra) and convergence for the measure
topology coincides with the usual notion of convergence in
measure.

It was established in \cite{BSZ} that the Heisenberg relation $[a, b] = \mathbf{1}$ does not hold
in the algebra of locally measurable operators affiliated with
an arbitrary infinite von Neumann algebra. In the case when
the von Neumann algebra is finite, it is proved there that $[a, b] \neq \mathbf{1}$
provided that $a$  is normal. For convenience of further referencing we state it in full.

\begin{thm}\label{com} \cite[Theorem 1.2]{BSZ}
Let $\mathcal{M}$ be a von Neumann algebra and let $a, b \in  S(\mathcal{M}).$
\begin{itemize}
\item[(a)] If $\mathcal{M}$ is infinite, then $[a, b] \neq  \textbf{1}.$
\item[(b)] If $\mathcal{M}$ is a finite type I algebra, then $[a, b] \neq \textbf{1}.$
\item[(c)] If $a$ is normal, then $[a, b] \neq \textbf{1}.$
\end{itemize}
\end{thm}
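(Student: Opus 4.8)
I would prove the three statements separately; part~(c) is the one this paper relies on and it admits the cleanest argument, so I describe it first (in a formal write-up part~(a) would be proved before part~(c) is invoked).

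For (c), the plan is: if $\mathcal{M}$ is infinite, invoke part~(a) and we are done, so assume $\mathcal{M}$ is finite and fix a faithful normal finite trace $\tau$. Since $a$ is normal, Fuglede's theorem gives $a^{*}\in\{a\}''$, so $\mathcal{A}:=\{a\}''$ is an abelian von Neumann subalgebra with $a\in S(\mathcal{A})\subseteq S(\mathcal{M})$. Let $\mathcal{E}\colon\mathcal{M}\to\mathcal{A}$ be the $\tau$-preserving normal conditional expectation, and extend it to a map $\mathcal{E}\colon S(\mathcal{M})\to S(\mathcal{A})$ which is an $S(\mathcal{A})$-bimodule map, i.e.\ $\mathcal{E}(xcy)=x\,\mathcal{E}(c)\,y$ for $x,y\in S(\mathcal{A})$ and $c\in S(\mathcal{M})$ whenever the products make sense. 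Applying $\mathcal{E}$ to $ab-ba=\mathbf{1}$ and using the bimodule property with $a\in S(\mathcal{A})$ gives $a\,\mathcal{E}(b)-\mathcal{E}(b)\,a=\mathbf{1}$. But $a$ and $\mathcal{E}(b)$ both lie in the \emph{commutative} algebra $S(\mathcal{A})$, so the left-hand side is $0$, forcing $\mathbf{1}=0$, a contradiction.

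For (b), I would use the structure of finite type~I algebras: $\mathcal{M}$ is a direct sum/integral of algebras $M_{n}(\mathbb{C})\otimes L^{\infty}(X_{n})\cong M_{n}(S(X_{n}))$, and correspondingly $S(\mathcal{M})$ is a direct sum/integral of the algebras $M_{n}(S(X_{n}))$. On each block the scalar matrix trace $\operatorname{Tr}_{n}\colon M_{n}(S(X_{n}))\to S(X_{n})$ annihilates additive commutators---indeed $\operatorname{Tr}_{n}(xy)=\sum_{i,j}x_{ij}y_{ji}=\sum_{i,j}y_{ji}x_{ij}=\operatorname{Tr}_{n}(yx)$ because $S(X_{n})$ is commutative---whereas $\operatorname{Tr}_{n}(\mathbf{1})=n\mathbf{1}\neq 0$; hence $[a,b]=\mathbf{1}$ is impossible in each block, and so in $S(\mathcal{M})$.

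For (a), I would first reduce to the case $\mathcal{M}$ properly infinite: if $z$ is the largest central projection with $z\mathcal{M}$ finite, then $\mathcal{M}$ not finite forces $\mathbf{1}-z\neq 0$ and $[(\mathbf{1}-z)a,(\mathbf{1}-z)b]=\mathbf{1}_{(\mathbf{1}-z)\mathcal{M}}$, so it suffices to exclude $[a,b]=\mathbf{1}$ in $S(\mathcal{N})$ for properly infinite $\mathcal{N}$, and there one splits further by type. If $\mathcal{N}$ is of type~III then $S(\mathcal{N})=\mathcal{N}$ is a Banach algebra and the classical Wielandt argument applies: from $[a^{n},b]=na^{n-1}$ one gets $n\,\|a^{n-1}\|\le 2\|a\|\,\|a^{n-1}\|\,\|b\|$, and since $a$ cannot be nilpotent this yields $n\le 2\|a\|\,\|b\|$ for every $n$, absurd. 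If $\mathcal{N}$ is of type~I$_{\infty}$, it is a direct sum/integral of algebras $B(H)\bar\otimes L^{\infty}(X)$, every $x\in S(\mathcal{N})$ has fibres $x(t)\in B(H)$ for a.e.\ $t$, and $[a(t),b(t)]=\mathbf{1}_{B(H)}$ a.e.\ contradicts Wielandt in the Banach algebra $B(H)$. The remaining case $\mathcal{N}$ of type~II$_{\infty}$ is the genuine obstacle: $S(\mathcal{N})$ is neither a Banach algebra nor does it fibre over $B(H)$, and $\mathcal{N}$ carries only a semifinite trace under which $\mathbf{1}$ is not integrable, so neither device above works; I expect this to be the hard part, reflected by the dedicated treatment in \cite{BSZ} (via the amplification $\mathcal{N}\cong M_{2}(\mathcal{N})$ and the type~II$_{1}$ structure of its corners). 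A secondary technical point, already present in (c), is the rigorous construction of the extension of $\mathcal{E}$ to unbounded measurable operators together with the bimodule property---standard, but it must be done with care, e.g.\ by defining $\mathcal{E}$ on positive cones via monotone limits and checking continuity in the measure topology.
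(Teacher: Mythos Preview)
The paper does not contain a proof of this theorem at all: it is quoted verbatim from \cite[Theorem~1.2]{BSZ} and introduced with the phrase ``For convenience of further referencing we state it in full.'' There is therefore nothing in the present paper to compare your attempt against; the authors simply import the result and use part~(c) once, in the proof of Theorem~\ref{thm_nonext_diagonal}.

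As for the substance of your sketch: your treatment of~(b) and of the type~III and type~I$_\infty$ subcases of~(a) is fine, and you rightly flag the type~II$_\infty$ subcase of~(a) as the genuine difficulty that you do not resolve. For~(c), the conditional-expectation idea is natural, but the step you call ``standard'' is exactly the delicate one: for a finite $\mathcal{M}$ the trace-preserving conditional expectation $\mathcal{E}$ onto an abelian subalgebra $\mathcal{A}$ certainly extends to the $L^p$-spaces, but an extension to \emph{all} of $S(\mathcal{M})$ satisfying $\mathcal{E}(ac)=a\,\mathcal{E}(c)$ for \emph{unbounded} $a\in S(\mathcal{A})$ and arbitrary $c\in S(\mathcal{M})$ is not automatic---$\mathcal{E}$ is only a contraction in the $L^p$-norms, not in the measure topology, so the limiting argument you hint at (monotone approximation plus measure-topology continuity) does not go through as stated. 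One can make a version of this work by cutting with spectral projections of $a$ and arguing more carefully, but as written the step is a gap rather than a routine verification. Since the paper itself offers no proof, you would in any case have to consult \cite{BSZ} for the authors' actual argument.
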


\subsection{Regular $*$-algebras and regularity of the algebra $S(\mathcal{M})$}

Let  $\mathcal{M}$ be a von Neumann algebra with a faithful
normal finite trace  $\tau$ and  let $S(\mathcal{M})$ be the Murrey-von Neumann algebra associated with $\mathcal{M}$.

A $\ast$-subalgebra  $\mathcal{A}$ of $S(\mathcal{M})$ is said to be
\emph{regular}, if it is a regular ring in the sense of von Neumann, i.e., if for every
$a\in\mathcal{A}$ there exists an element  $b\in\mathcal{A}$ such that
$aba=a$ and  $c^*c=0$ implies $c=0$ for all  $c\in\mathcal{A}$ (see e.g. \cite{Skor}).

Let $a\in S(\mathcal{M})$ and let  $a=v|a|$ be the polar decomposition of $a.$
Then $l(a) =v v^\ast$ and  $r(a)=v^\ast v$ are left and right supports of the element  $a$, respectively.
The projection  $s(a)=l(a)\vee r(a)$ is the support of the element $a$. It is clear that $r(a)=s(|a|)$ and  $l(a)=s(|a^*|)$.

Let $|a|=\int\limits_0^\infty \lambda d e_\lambda$ be the spectral resolution of the element
$|a|\in S(\mathcal{M})$. Since $\mathcal{M}$ is finite, there exists an element
$
i(|a|)=\int\limits_0^\infty \lambda^{-1} d e_\lambda\in S(\mathcal{M})$. Moreover,
$$
s(i(|a|))=s(|a|),\ |a|i(|a|)=i(|a|)|a|=s(|a|).
$$
Set  $i(a)=i(|a|)v^*$. We have
$$
ai(a)=v|a|i(|a|)v^*=vs(|a|)v^*=l(a),\ i(a)a=i(|a|)v^*v|a|=r(a),\ ai(a)a=a.
$$
Therefore  $S(\mathcal{M})$ is a regular $*$-algebra. The element
$i(a)$ is called a \emph{partial inverse} of the element $a,$  it is a unique element in
$S(\mathcal{M})$, such that $i(a)l(a)=i(a)$ and  $i(a)a=r(a)$ (see \cite[Proposition  91]{Skor}).

Let  $\mathcal{A}$ be a  regular $*$-subalgebra of  $S(\mathcal{M})$ and
let $\mathbf{1}\in\mathcal{A}$. If  $a\in \mathcal{A}$, then  $l(a),r(a)\in\mathcal{A}$. Indeed, by \cite[Proposition   88]{Skor} the left and right ideals $\mathcal{A}a$ and $a\mathcal{A},$ are generated by projections and therefore there exist projections  $p$ and $q$ in  $\mathcal{A}$ such that
$\mathcal{A}a=\mathcal{A}q,\ a\mathcal{A}=p\mathcal{A}$. Thus
$a=bq$ for some $b\in \mathcal{A}$ and so  $aq=bqq=bq=a,$ and therefore
$r(a)\leq q$. On the other hand,  $q=ca$, hence
$q r(a)=car(a)=ca=q$, and so  $q\leq r(a)$. We conclude that  $r(a)=q\in\mathcal{A}$. Similarly, $l(a)=p\in\mathcal{A}$.

\subsection{Derivations on algebras} \label{sec_der_prel}

\begin{defn}\label{def_nonexpansive}Let $\mathcal{A},\mathcal{B}$ be $*$-subalgberas in $S(\mathcal{M})$.
A derivation $\delta:\mathcal{A}\to \mathcal{B}$ is called \emph{non-expansive} if $s(\delta(x))\leq s(x)$ for all $x\in \mathcal{A}$.
\end{defn}

We define the so-called rank  metric $\rho$ on $S(\mathcal{M})$  by setting
$$
\rho(x, y)=\tau( r((x-y)))=\tau(l(x-y)),\,\, x, y\in \mathcal{A}.
$$
 In fact, the rank-metric $\rho$ was firstly introduced in a general case of regular rings in \cite{Neu37}, where it was shown it is a metric.
By \cite[Proposition 2.1]{Ciach}, the algebra $S(\mathcal{M})$ equipped with the metric $\rho$ is a complete topological ring .

\begin{prop}\label{delta continuity lemma}
Let $\mathcal{A}$ be a $*$-regular subalgebra of $S(\mathcal{M})$. Any derivation $\delta:\mathcal{A}\to S(\mathcal{M})$ is continuous with respect to the metric $\rho$.
\end{prop}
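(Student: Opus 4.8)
The plan is to exploit the fact that, for a $*$-regular subalgebra $\mathcal{A}$, partial inverses of elements of $\mathcal{A}$ are again in $\mathcal{A}$, together with the identity $\delta(p) = \delta(p)p + p\delta(p) = \delta(p) - p\delta(p)p$ valid for any idempotent $p$, which forces $p\delta(p)p = 0$. I would first record the key consequence of regularity: if $a \in \mathcal{A}$ then $l(a), r(a) \in \mathcal{A}$ (as proved in the excerpt) and the partial inverse $i(a)$ lies in $\mathcal{A}$ as well, since $i(a) = i(|a|)v^*$ with $v = l(a)\cdot(\text{something})$ — more carefully, $i(a)$ is characterized by $i(a)a = r(a)$, $a\,i(a) = l(a)$, $a\,i(a)\,a = a$, and one can produce it inside $\mathcal{A}$ by von Neumann regularity applied to $a$. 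Thus every element of $\mathcal{A}$ has a "group-inverse-like" partner in $\mathcal{A}$, and $\mathcal{A}$ is rich in idempotents.

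Next I would estimate $\rho(\delta(x) - \delta(y), 0) = \rho(\delta(x-y), 0)$ and reduce, by linearity of $\delta$, to showing that $\rho(x_n, 0) \to 0$ implies $\rho(\delta(x_n), 0) \to 0$. Write $x = x_n$ with $p = r(x) = s(|x|)$, so $\tau(p) = \rho(x,0)$ is small. The crucial algebraic point is that $\delta(x) = \delta(xp) = \delta(x)p + x\delta(p)$, and similarly using $l(x) = q$ on the left, $\delta(x) = \delta(qx) = q\delta(x) + \delta(q)x$. Hence $s(\delta(x))$, which a priori could be large, must be controlled: I would argue that $r(\delta(x)) \le r(\delta(x)p) \vee r(x\delta(p))$ — but $r(x\delta(p)) \le r(\delta(p)) = r(p\delta(p))$ (using $\delta(p) = \delta(p)p + p\delta(p)$ and $p\delta(p)p=0$, one gets $\delta(p) = \delta(p)(\mathbf{1}-p) + (\mathbf{1}-p)\delta(p)$, whence $r(\delta(p)) \le (\mathbf{1}-p)\vee p\cdot(\ldots)$; a cleaner bound is $r(\delta(p)) \le 2\tau(p)$ type estimates). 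Combining, $\tau(r(\delta(x))) \le \tau(p) + \tau(q) + \tau(r(\delta(p))) + \tau(r(\delta(q)))$, and each term is bounded by a fixed multiple of $\tau(p) = \tau(q) = \rho(x,0)$. This gives $\rho(\delta(x),0) \le C\,\rho(x,0)$ for a universal constant $C$ (I expect $C = 4$ or so from carefully tracking the supports), which is far stronger than mere continuity.

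The main obstacle is the estimate $\tau(r(\delta(p))) \le C'\tau(p)$ for a projection $p$. The point $p\delta(p)p = 0$ tells us $\delta(p)$ is "off-diagonal" with respect to $p$: $\delta(p) = p\delta(p)(\mathbf 1-p) + (\mathbf 1-p)\delta(p)p + (\mathbf 1-p)\delta(p)(\mathbf 1-p)$, and in fact $(\mathbf 1-p)\delta(p)(\mathbf 1-p) = \delta(\mathbf 1 - p) - (\text{cross terms}) $; applying the same identity to the idempotent $\mathbf 1 - p$ forces $(\mathbf 1-p)\delta(\mathbf 1-p)(\mathbf 1-p) = 0$, i.e. $(\mathbf 1 - p)\delta(p)(\mathbf 1 - p) = 0$. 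Therefore $\delta(p) = p\delta(p)(\mathbf 1-p) + (\mathbf 1-p)\delta(p)p$, so $l(\delta(p)) \le p \vee (\mathbf 1 - p)\cdot$, actually $l(\delta(p)) \le p\delta(p) \vee (\mathbf 1-p)\delta(p)p$, and more to the point $r(\delta(p)) \le r(p\delta(p)(\mathbf 1-p)) \vee r((\mathbf 1-p)\delta(p)p) \le (\mathbf 1-p) \vee p = \mathbf 1$, which is useless. The fix is to use the \emph{left} support on the first summand and \emph{right} support on the appropriate one: $\delta(p) = p\delta(p) + \delta(p)p - p\delta(p)p \cdot 2$? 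I would instead bound $\tau(r(\delta(p)))$ by $\tau(r((\mathbf 1-p)\delta(p)p)) + \tau(r(p\delta(p)(\mathbf 1-p))) \le \tau(p) + \tau(r(\delta(p)p))$ and note $r(\delta(p)p) \le p$, hence $\tau(r(\delta(p))) \le \tau(p) + \tau(p) = 2\tau(p)$. Once this is in hand the rest is bookkeeping, and one concludes $\rho(\delta(x),0)\le 4\rho(x,0)$, so $\delta$ is Lipschitz, in particular continuous, for the rank metric $\rho$.
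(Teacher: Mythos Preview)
Your proposal is correct in substance and follows the same strategy as the paper: use that $l(x),r(x)\in\mathcal{A}$, expand via the Leibniz rule, and bound the rank of each summand. The difference is in the decomposition. You write $\delta(x)=\delta(x)\,r(x)+x\,\delta(r(x))$ and then need a separate lemma, that $\tau(r(\delta(p)))\le 2\tau(p)$ for projections $p$, proved via the off-diagonal identity $\delta(p)=p\,\delta(p)(\mathbf 1-p)+(\mathbf 1-p)\,\delta(p)\,p$. The paper instead writes $x=l(x)\,x\,r(x)$ and applies Leibniz once to obtain
\[
\delta(x)=\delta(l(x))\,x+l(x)\,\delta(x)\,r(x)+x\,\delta(r(x));
\]
each of these three terms visibly has either right support $\le r(x)$ or left support $\le l(x)$, whence $\tau(r(\delta(x)))\le 3\tau(r(x))$ immediately, without ever analysing $\delta$ on projections. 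Your route works (and, tracked carefully, also gives constant $3$, not $4$), but the detour through the structure of $\delta(p)$ and the several abandoned attempts in the write-up are unnecessary; the paper's three-term decomposition is the cleaner path.
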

\begin{proof} If $x\in\mathcal{A}$, then $l(x),r(x)\in \mathcal{A}$.
We have
$$\delta(x)=\delta(l(x)\cdot x\cdot r(x))=\delta(l(x))x+l(x)\delta(x)r(x)+x\cdot\delta(r(x)).$$
Therefore,  (see e.g. \cite[p. 161, $3(\eta)$]{Neu62})
\begin{eqnarray*}
\tau(r(\delta(x))) &\leq &
 \tau(r(\delta(l(x))x))+\tau(r(l(x)\delta(x)r(x)))+\tau(r(x\cdot\delta(r(x))))=\\
&=& \tau(r(\delta(l(x))x))+\tau(r(l(x)\delta(x)r(x)))+\tau(l(x\cdot\delta(r(x))))\leq 3\tau(r(x)).
\end{eqnarray*}
Hence,
$$\rho(\delta(a),\delta(b))=\tau(r(\delta(a-b)))\leq 3\tau(r(a-b))=3\rho(a,b).$$
 This completes the proof.
\end{proof}

\section{Lack of non-trivial derivations  with values in bimodules of operators affiliated with type II$_1$-algebras} \label{sec6}

In this section we consider  a symmetric bimodule (or, a Calkin operator space)  $\mathfrak{B}\subset S(\mathcal{M})$
 over an arbitrary type  II$_1$-algebra $\mathcal{M}$.

We complement the Kadison--Liu result \cite{KL} by showing that the only derivation that maps  $S(\mathcal{M})$ into any such
$\mathcal{M}$-bimodule $\mathfrak{B}$ is trivial provided that $\mathfrak{B}\neq S(\mathcal{M})$.

We start with collecting some technical tools.

In this section, we assume $\mathcal{M}$ is an atomless von Neumann algebra with a faithful, normal, normalized trace $\tau$. For every $x\in \mathcal{M},$  the generalised singular value
function $\mu(x),$  denoted $t \to  \mu(t, x)$ for $t \in  [0,1],$
is defined by the formula (see,
e.g.,  \cite{FK}, \cite{LSZ})
$$
\mu( t, x) =\inf  \{\|xp\|_\infty: p\in P(\mathcal{M}), \, \tau(1-p)\leq t\}.
$$

For a self-adjoint element $b\in S(\mathcal{M}),$ let $\lambda(b)=\lambda(\cdot,b)$ be the eigenvalue function of $b$ (also known as the spectral scale, see \cite{ArMa}, \cite{DSZ} and \cite{HiaiN}) defined by
$$\lambda(t,b)=
\begin{cases}
\mu(t,b_+),& 0<t<\tau({\rm supp}(b_+))\\
\lim_{\varepsilon\to 0^+} \lambda (1-t)-\varepsilon,b_-),& \tau({\rm supp}(b_+))\leq t<1.
\end{cases},
$$
Assume that  $\mathcal{M}=L_{\infty}(0,1)$ and
$\tau(f)=\int_{0}^1 fdm$, $f\in  \mathcal{M}$, where  $m$ is the   Lebesgue measure on $(0,1)$.
 In this case, $S(\mathcal{M})$ consists of all complex-valued  Lebesgue measurable functions $f$ on $(0,1)$, that is $S(\mathcal{M})=S(0,1)$ \cite{FK, LSZ}. In this setting, for every $f\in S(0,1)$ (respectively, for every real-valued $f\in S(0,1)$) the function $\mu(f)$ coincides with  the \emph{right-continuous equimeasurable nonincreasing rearrangement} of $|f|$ (see e.g. \cite{HiaiN}):
$$\mu(t;f)=\inf\{s\in\mathbb{R}:\ m(\{x\in X:\,|f(x)|>s\})\leqslant t\},\ \ t\in[0,1 )$$
(respectively,
$$\lambda(t;f)=\inf\{s\in\mathbb{R}:\ m(\{x\in X:\,f(x)>s\})\leqslant t\},\ \ t\in[0,1 ).)$$

 A linear subspace $\mathcal{B}$ of $S(\mathcal{M})$ is called a Calkin operator space if $x\in \mathcal{B}$ whenever $x\in S(\mathcal{M})$ and $\mu(x)\leq \mu(y)$ for some $y\in \mathcal{B}$. A Calkin function space is the term reserved for a Calkin
operator space when $\mathcal{M}=L_\infty(0,1)$ \cite[Definition 2.4.1]{LSZ}.  If $\mathcal{B}\subseteq S(\mathcal{M})$ is a Calkin operator space, then the set
$B\subset S(0,1)$ defined by
$$
B=\{x\in S(0,1): \ \exists z\in \mathcal{B} {\rm\ such \ that}\ \mu(x)=\mu(z)  \}
$$
is a Calkin function space. Vice versa, if $B$ is Calkin function space,
then
$$
\mathcal{B}=\{x\in S(\mathcal{M}): \ \exists z\in {B} {\rm\ such \ that}\ \mu(x)=\mu(z)  \}
$$
is a Calkin operator space. This provides a canonical bijection between Calkin operator
spaces and Calkin function spaces. For this results we refer the reader to \cite[Theorem 2.4.4]{LSZ}.  We claim that every non-empty Calkin operator space $\mathcal{B}\subseteq S(\mathcal{M})$ contains $\mathcal{M}$. Indeed, in view of the above, it is sufficient to prove that $L_\infty(0,1)\subseteq B$. Since $B\neq\emptyset$, there exists $0\neq x_0\in B$. Then for some $\epsilon>0$ and for some measurable set $e\subset (0,1)$ of positive measure we have $|x|\ge \epsilon\chi_{e}$. This implies that $\chi_{e}\in B$ and so, $\chi_{[0,m(e))}\in B$. The latter, implies that $\chi_{[m(e), 2m(e))}\in B$ and repeating this argument, we infer that $\chi_{[0,1)}\in B$. This implies the claim.

The following proposition extends  \cite[Proposition 3.0.3]{CK2} (see also \cite[Proposition 1.8]{CK3} and \cite{CKSa,CS}). The proof follows \cite{CS} and is given here for convenience of the reader.
\begin{prop}\label{preserves the spectral scale}
If $x=x^* \in S(\mathcal{M},\tau)$, then there exists an atomless commutative weakly closed $*$-subalgebra $\mathcal{N}$ in $\mathcal{M}$ containing the spectral family of the operator $x$, and a $*$-isomorphism $V$ acting from  $S(\mathcal{N},\tau)$ onto
$S([0,\tau(s(x))),m)$   such that $V(x) =\lambda(x)$  and  $\lambda(V(f))=\lambda(f)$ for every  $f=f^*\in S(\mathcal{N},\tau).$
\end{prop}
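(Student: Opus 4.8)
The plan is to take $\mathcal{N}$ to be a countably generated atomless abelian subalgebra containing the spectral projections of $x$, obtained from $W^*(x)$ by enlarging it over the atoms of the latter, and only as much as is forced by the flat stretches of $\lambda(x)$. The case $x=0$ being trivial, I assume $x\neq 0$; replacing $\mathcal{M}$ by $s(x)\mathcal{M}s(x)$ (again an atomless finite von Neumann algebra, with $\tau(\mathbf 1)=\tau(s(x))$, relative to which the spectral family of $x$ and $\lambda(x)$ are the evident restrictions) I may assume $s(x)=\mathbf 1$, so that $\lambda(x)$ becomes an a.e.\ finite function on $[0,\tau(s(x)))$ with no flat stretch on which it vanishes. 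First I set $\mathcal{N}_0=W^*(x)$, the abelian von Neumann algebra generated by the spectral family $\{e_\lambda\}$ of $x$; since already the $e_q$ with $q\in\mathbb Q$ generate it, $\mathcal{N}_0$ is countably generated and $x\in S(\mathcal{N}_0)$. Being countably generated and abelian, $\mathcal{N}_0=\mathcal{N}_0 e_{\mathrm c}\oplus\bigoplus_n\mathbb C\,p_n$, where the (at most countably many) projections $p_n=\chi_{\{c_n\}}(x)$ are its atoms — the $c_n\neq 0$ being the eigenvalues of $x$ — while $e_{\mathrm c}=\mathbf 1-\sum_n p_n$ carries the diffuse part $xe_{\mathrm c}$ of $x$, whose distribution $\nu_{\mathrm c}$ (relative to $\tau$) is the continuous part of the distribution of $x$.

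Next I build $\mathcal{N}$. Each $p_n$, although an atom of $\mathcal{N}_0$, is — precisely because $\mathcal{M}$ is atomless — not a minimal projection of $\mathcal{M}$, so $p_n\mathcal{M}p_n$ is again an atomless finite von Neumann algebra; inside it I choose a countably generated atomless abelian von Neumann subalgebra $\mathcal{C}_n$ with unit $p_n$. Put $\mathcal{N}:=W^*\bigl(\mathcal{N}_0 e_{\mathrm c}\cup\bigcup_n\mathcal{C}_n\bigr)$. Since $\mathcal{N}_0 e_{\mathrm c},\mathcal{C}_1,\mathcal{C}_2,\dots$ are supported by the pairwise orthogonal projections $e_{\mathrm c},p_1,p_2,\dots$ and are individually abelian, $\mathcal{N}=\mathcal{N}_0 e_{\mathrm c}\oplus\bigoplus_n\mathcal{C}_n$ is a countably generated atomless abelian von Neumann algebra; it contains each $e_\lambda=e_\lambda e_{\mathrm c}+\sum_{c_n\le\lambda}p_n$ (note $p_n=\mathbf 1_{\mathcal{C}_n}$), hence the whole spectral family of $x$, and $x=xe_{\mathrm c}+\sum_n c_n p_n\in S(\mathcal{N})$.

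Then I produce $V$. Split $[0,\tau(s(x)))=I_{\mathrm c}\sqcup\bigsqcup_n J_n$, where $J_n=\{t:\lambda(t,x)=c_n\}$ is the half-open interval of length $\tau(p_n)$ on which $\lambda(x)$ is flat and $I_{\mathrm c}$ (of measure $\tau(e_{\mathrm c})$) is the complement. On the $n$-th summand, $\mathcal{C}_n$ and $L_\infty(J_n,m)$ are countably generated atomless abelian von Neumann algebras with faithful normal traces of the same total mass $\tau(p_n)$, hence trace-preservingly $*$-isomorphic, and any such isomorphism $V_n$ carries the central element $c_n p_n=c_n\mathbf 1_{\mathcal{C}_n}$ to $c_n\chi_{J_n}$. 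On the diffuse summand, $\mathcal{N}_0 e_{\mathrm c}=W^*(xe_{\mathrm c})$ with $xe_{\mathrm c}$ of distribution $\nu_{\mathrm c}$, while $\lambda(x)|_{I_{\mathrm c}}$ — a nonincreasing function with no flat stretch, hence essentially injective — generates $L_\infty(I_{\mathrm c},m)$ and also has distribution $\nu_{\mathrm c}$; by the spectral theorem both triples $(\mathcal{N}_0 e_{\mathrm c},\tau,xe_{\mathrm c})$ and $(L_\infty(I_{\mathrm c},m),m,\lambda(x)|_{I_{\mathrm c}})$ are trace-preservingly $*$-isomorphic to $L_\infty(\mathbb R,\nu_{\mathrm c})$ equipped with its integral and with distinguished element the identity function on $\mathbb R$, which yields a trace-preserving $*$-isomorphism $V_{\mathrm c}$ carrying $xe_{\mathrm c}$ to $\lambda(x)|_{I_{\mathrm c}}$. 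Then $V:=V_{\mathrm c}\oplus\bigoplus_n V_n$ is a trace-preserving $*$-isomorphism of $\mathcal{N}$ onto $L_\infty([0,\tau(s(x))),m)$ with $V(x)=\lambda(x)$, and by Proposition~\ref{prop_trace_pres_isom} it extends to a $*$-isomorphism $V:S(\mathcal{N},\tau)\to S([0,\tau(s(x))),m)$. Being trace preserving, $V$ sends spectral projections to spectral projections of equal trace, so $f$ and $V(f)$ have the same distribution for every $f=f^*\in S(\mathcal{N},\tau)$; since the eigenvalue function depends only on the distribution, $\lambda(V(f))=\lambda(f)$, completing the argument.

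The hard part will be the design of $\mathcal{N}$ rather than any single estimate: one must enlarge $W^*(x)$ over each of its atoms — exactly where atomlessness of $\mathcal{M}$ enters, through atomlessness of the corner $p_n\mathcal{M}p_n$ — yet leave the diffuse part of $W^*(x)$ untouched, since any enlargement there would destroy the equality $V(x)=\lambda(x)$. Once $\mathcal{N}$ is fixed, verifying that it is abelian, atomless, countably generated and still contains the spectral family, and matching its direct-sum decomposition against that of $L_\infty([0,\tau(s(x))),m)$ along the flat and the strictly decreasing parts of $\lambda(x)$, is routine.
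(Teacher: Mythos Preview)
Your proof is correct and shares the paper's core idea: start from $W^*(x)$ (equivalently, the Boolean algebra generated by the spectral projections), locate its atoms, and enlarge each atom to an atomless commutative piece inside the corresponding corner of $\mathcal{M}$ using that $\mathcal{M}$ is atomless. Where you diverge is in the construction of $V$. The paper invokes the classification of separable nonatomic measure algebras (Halmos) to produce \emph{some} trace-preserving Boolean isomorphism $\phi$ from the projection lattice of $\mathcal{N}$ onto that of $L_\infty(0,\tau(s(x)))$, then extends to $V$; trace preservation immediately gives $\lambda(Vf)=\lambda(f)$, but the specific normalisation $V(x)=\lambda(x)$ is left implicit (one must still adjust $\phi$ by a measure-preserving automorphism of $(0,\tau(s(x)))$). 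You instead build $V$ by hand, matching the direct-sum decomposition $\mathcal{N}_0e_{\mathrm c}\oplus\bigoplus_n\mathcal{C}_n$ against the partition of $[0,\tau(s(x)))$ into the flat intervals $J_n$ of $\lambda(x)$ and their complement $I_{\mathrm c}$, and on the diffuse summand you use the spectral theorem for a single operator with continuous distribution rather than the abstract Boolean classification. This buys you a transparent verification of $V(x)=\lambda(x)$ and avoids the appeal to Halmos, at the cost of a slightly more intricate bookkeeping (splitting into continuous and atomic parts and checking that $\lambda(x)|_{I_{\mathrm c}}$ is essentially injective with distribution $\nu_{\mathrm c}$). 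Both routes are equally valid; yours is the more explicit of the two.
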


\begin{proof}
Let $\nabla _0$ be a countable Boolean subalgebra in $P(\cM)$ which contains all   spectral  projections $E^{x}(r,\infty)$  and $E^x(-\infty,r)$ of $x$, where $r$ is a rational number.
Let $\nabla$ be the closure of $\nabla_0$ in the measure topology.
 Then, $\nabla$ is a complete Boolean subalgebra in $P(\cM)$  and  the least upper bound in $\nabla$ for any subset $A\subset \nabla$ coincides with the least  upper bounded of $\cA$ in $P(\cM)$.
Such subalgebra are also called regular.

Let $\triangle$ be the set of all atoms in $\nabla$ and $\triangle\ne 0$.
Since $P(\cM)$ is a non-atomic Boolean algebra, for every $q\in \triangle$, there exists a commutative  non-atomic regular Boolean subalgebra $\nabla_q$ of $P(\cM_q)$ which is separable in the measure topology.

Let $B$ be the set of all $e\in P(\cM)$ for which $e(1-\sup \triangle)\in \nabla$ and $eq\in \nabla_q$ for any $q\in \triangle$.
It is clear that $B$ is a complete regular non-atomic and separable (with respect to the measure topology)
Boolean subalgebra in $P(\cM)$ which contains all the spectral projections of $x$.
Hence, there exists an isomorphism $\phi$ from $B$ on the Boolean algebra $P(L_\infty (0,\tau(s(x))))$
such that $m(\phi(e)) = \tau(e)$ for all $e\in B$ \cite{Halmos}.

Let us denote by $\cN$ the weak closure of the $*$-algebra generated by $B$, which is a  non-atomic commutative von Neumann subalgebra of $\cM$.

By Proposition \ref{prop_trace_pres_isom}, the isomorphism $\phi$ may be extended up to the $*$-isomorphism $V$ from $S(\cN,\tau)$ onto $S(0,\tau(s(x)))$ and, in addition, $V(\cN) =L_\infty (0,\tau(s(x)))$, $\lambda (t, x) = \lambda(t,  Vx)$ for all $t>0$, $x\in S(\cM,\tau)$.
\end{proof}

The following result extends  \cite[Corollary 13]{KL}.

\begin{thm}\label{KL-new}
Let  $\mathcal{M}$ be a type II$_1$ von Neumann algebra  and let $\mathcal{B}\subsetneq S(\mathcal{M})$ be a Calkin operator space. Then any derivation $\delta: S(\mathcal{M})\longrightarrow S(\mathcal{M})$, such that $\delta(S(\mathcal{M}))\subset \mathcal{B}$ vanishes.
\end{thm}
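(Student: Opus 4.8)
The plan is to reduce the general type II$_1$ situation to the factor case by decomposing $\mathcal{M}$ over its center, and then, in the factor case, to exploit the rank-metric continuity of derivations together with the fact that the identity cannot be written as a commutator $[a,b]$ with $a$ normal (Theorem~\ref{com}(c)). Suppose, for contradiction, that $\delta$ is a nonzero derivation with $\delta(S(\mathcal{M}))\subseteq\mathcal{B}$. First I would observe that $\delta$ is automatically rank-continuous: $S(\mathcal{M})$ is a $*$-regular subalgebra of itself, so Proposition~\ref{delta continuity lemma} gives $\rho(\delta(x),\delta(y))\le 3\rho(x,y)$. Next I want to localize. Since $\delta$ is a derivation, it annihilates central idempotents in the sense that $z\delta(x)=\delta(zx)=\delta(x)z$ for every central projection $z$ (using $\delta(z)=\delta(z^2)=2z\delta(z)$, hence $z\delta(z)=0$, and then $z\delta(x)=\delta(zx)-\delta(z)x=\delta(zx)$), so $\delta$ restricts to a derivation of each $S(z\mathcal{M})$ into $z\mathcal{B}$, and $z\mathcal{B}$ is again a Calkin operator space inside $S(z\mathcal{M})$. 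If $\mathcal{B}\subsetneq S(\mathcal{M})$, there is some central projection $z_0$ for which $z_0\mathcal{B}\subsetneq S(z_0\mathcal{M})$, and a maximality/disintegration argument over the center then lets us assume $\mathcal{M}$ is a type II$_1$ \emph{factor}; in that case $S(\mathcal{M})$ is simple (as a ring), and indeed every $\mathcal{M}$-bimodule $\mathcal{B}\subsetneq S(\mathcal{M})$ is a proper Calkin operator space, so it suffices to treat the factor case.

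In the factor case I would argue as follows. Pick $x=x^*\in S(\mathcal{M})$ with full support, $s(x)=\mathbf 1$. By Proposition~\ref{preserves the spectral scale} there is an atomless commutative von Neumann subalgebra $\mathcal{N}\subseteq\mathcal{M}$ containing the spectral projections of $x$ and a measure-preserving $*$-isomorphism $V\colon S(\mathcal{N})\to S(0,1)$ with $V(x)=\lambda(x)$. The point of introducing $\mathcal{N}$ is that the restriction of $\delta$ to the regular $*$-algebra generated by $x$ inside $S(\mathcal{N})$ transfers, via $V$, to a rank-continuous derivation of a regular $*$-subalgebra of $S(0,1)$ into a Calkin \emph{function} space $B\subsetneq S(0,1)$ (the function-space counterpart of $\mathcal{B}$ under the canonical bijection recalled before Proposition~\ref{preserves the spectral scale}). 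Now in $S(0,1)$ one can manufacture an element whose image under such a derivation is forced to be unbounded on a set of positive measure and in fact to dominate every Calkin function: the classical construction, following the commutative examples of~\cite{BSCh04,BSCh06}, produces $f\in S(0,1)$ with $\delta(f)\notin B$ unless $\delta$ is trivial — concretely, one writes a suitable $f$ as a rank-convergent sum $\sum_n c_n\chi_{e_n}$ of characteristic functions of disjoint sets, uses $\delta(\chi_e)=0$ (idempotents), and contrasts this with the behavior forced by rank-continuity on approximating step functions of a strictly increasing function. This yields $\delta|_{S(\mathcal{N})}=0$. Since $x=x^*$ with full support was arbitrary (and a general self-adjoint element differs from one of full support by a central cutoff, handled by the localization step), $\delta$ vanishes on every self-adjoint element, hence on all of $S(\mathcal{M})$.

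Alternatively, and perhaps more in the spirit of the paper, once $\delta$ is known to vanish on all masas of $\mathcal{M}$ one invokes Theorem~\ref{com}(c): if $\delta$ were nonzero there would be, after the reductions above, a self-adjoint (or at least normal) $a$ and an element $b$ with $\delta(a)=b$ nonzero; a standard renormalization (replacing $a$ by $p(a)$ for a suitable Borel function so that $\delta$ of it equals the support projection of $b$ times a scalar, then manufacturing a unit) would express $\mathbf 1$ as $[a',b']$ with $a'$ normal, contradicting Theorem~\ref{com}(c). The main obstacle is the middle step: showing that a rank-continuous derivation of a regular $*$-subalgebra of $S(0,1)$ into a \emph{proper} Calkin function space must vanish. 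Rank-continuity alone is weak (the rank metric is insensitive to the size of values, only to the size of supports), so the argument must combine it with the algebraic rigidity $\delta(\chi_e)=0$ on idempotents and with a careful choice of a "bootstrap" element $f$ whose iterated decomposition forces $\delta(f)$ to leave every Calkin function space — this is where the bulk of the technical work lies, and where the hypothesis $\mathcal{B}\subsetneq S(\mathcal{M})$ (equivalently $B\neq S(0,1)$) is essential.
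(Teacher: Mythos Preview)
Your proposal has a genuine gap at the central step. You want to restrict $\delta$ to $S(\mathcal{N})$ and transfer it via $V$ to a derivation of (a subalgebra of) $S(0,1)$ into the Calkin function space $B$. But $\delta|_{S(\mathcal{N})}$ takes values in $S(\mathcal{M})$, not in $S(\mathcal{N})$: nothing in the hypotheses forces $\delta$ to preserve the commutative subalgebra, and in general it will not. Consequently $V$ cannot be applied to $\delta(f)$, and the ``transferred derivation'' does not exist. Relatedly, your claim that $\delta(\chi_e)=0$ for idempotents $e$ fails here: from $\delta(e)=e\delta(e)+\delta(e)e$ one only gets $e\delta(e)e=0$ and $(1-e)\delta(e)(1-e)=0$, so $\delta(e)$ is the off-diagonal part $e\delta(e)(1-e)+(1-e)\delta(e)e$, which need not vanish when the target algebra is noncommutative. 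The alternative route via Theorem~\ref{com}(c) is also unsubstantiated: there is no functional calculus for an abstract derivation that would let you arrange $\delta(p(a))$ to equal a prescribed projection, so the promised ``standard renormalization'' producing $\mathbf{1}=[a',b']$ with $a'$ normal is not available.

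The paper's argument avoids all of this and is much shorter. It does \emph{not} reduce to factors, does \emph{not} restrict $\delta$ to a masa, and does \emph{not} invoke Theorem~\ref{com}. After symmetrizing so that $\delta=\delta^\ast$, one uses rank-continuity to find a self-adjoint $a\in\mathcal{M}$ with $\delta(a)\neq 0$. The key move is to apply Proposition~\ref{preserves the spectral scale} to $\delta(a)$ (not to $a$): this produces an atomless commutative subalgebra $\mathcal{N}$ with $\delta(a)\in S(\mathcal{N})$ and an isomorphism $V:S(\mathcal{N})\to S[0,\tau(s(\delta(a))))$ sending $\delta(a)$ to $y=\lambda(\delta(a))$, which is invertible there. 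Since $\mathcal{B}\subsetneq S(\mathcal{M})$, the associated Calkin function space $B$ omits some decreasing $z=\mu(z)$; set $x=z\cdot\chi_{[0,\tau(s(\delta(a))))}$ and $b=V^{-1}(y^{-1}x)\in S(\mathcal{N})$. Then $\delta(a)\,b=V^{-1}(x)\notin\mathcal{B}$. But by the Leibniz rule $\delta(a)\,b=\delta(ab)-a\,\delta(b)$, and both terms on the right lie in $\mathcal{B}$ (the second because $a\in\mathcal{M}$ is bounded and $\mathcal{B}$ is a Calkin space), a contradiction. The whole point is that one never needs $\delta$ to map $S(\mathcal{N})$ into itself; one only needs $\delta(a)$ itself to sit inside a commutative piece so that it can be ``amplified'' by a commuting $b$ to escape $\mathcal{B}$.
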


\begin{proof}
Since $\delta(x)\in \mathcal{B}$ for all $x\in S(\mathcal{M})$ and $\mathcal{B}$ is closed with respect to conjugation,  it follows that that $\delta(x)^\ast\in \mathcal{B}$ for all $x\in S(\mathcal{M}).$
Therefore,  $\displaystyle \frac{\delta(x)+\delta(x^\ast)^\ast}{2}\in \mathcal{B}$ and $\displaystyle \frac{\delta(x)-\delta(x^\ast)^\ast}{2i}\in \mathcal{B}$
for all $x\in S(\mathcal{M}).$ Thus, replacing $\delta$ with $\displaystyle \frac{\delta+\delta^\ast}{2},$ where $\delta^\ast(x)=\delta(x^\ast)^\ast,\,\, x\in S(\mathcal{M}),$ without loss of generality, we can assume that  $\delta=\delta^\ast,$ that is, $\delta(x)^\ast=\delta(x^\ast)$ for all
$x\in S(\mathcal{M}).$

Assume that $\delta\neq 0.$ If $\delta|_{\mathcal{M}}=0$, then due to $\rho$-continuity of $\delta$ (see Proposition \ref{delta continuity lemma}) and  $\rho$-density of $\mathcal{M}$ in $S(\mathcal{M}),$ we obtain that $\delta=0,$ which contradicts with the assumption $\delta\neq 0.$ So, if $\delta\neq 0$, then there exists a self-adjoint element $a\in \mathcal{M}$ such that
$\delta(a)\neq 0.$

The operator $\delta(a)$ is self-adjoint, and so, by Proposition~\ref{preserves the spectral scale}, there exists an atomless commutative weakly closed $*$-subalgebra $\mathcal{N}$ in $\mathcal{M}$ and a $*$-isomorphism $V$ acting from  $S(\mathcal{N},\tau)$ onto
$S([0,\tau(s(\delta(a)))),m)$   such that $V(\delta(a)) =\lambda(\delta(a)).$
Setting $\pi=V^{-1}$ and
$$y=\lambda(\delta(a)),$$ we have
 $$\pi(y)=\delta(a)\quad {\rm and}\quad \pi(\chi_{[0,\tau(s(\delta(a))))})=s(\delta(a)),$$
because by  construction of $V$ the support of $y$ is $[0,\tau(s(\delta(a)))).$ In particular,
$y$ is invertible in $S[0,\tau(s(\delta(a)))).$
 We claim that the assumptions $\mathcal{B}\neq S(\mathcal{M})$ and $\mathcal{B}$ is a Calkin operator space, imply that there exists $x\in S[0,\tau(s(\delta(a))))$ such that  $\pi(x)\notin \mathcal{B}.$
   Indeed, let us  consider the function Calkin space $B$ introduced above.
Since $\mathcal{B}\neq S(\mathcal{M})$, it follows that $B\neq S(0,1)$.
Now, we simply take any $z=\mu(z)\in S(0,1)$ such that $z\notin B$ and set $x=\mu(z)\cdot \chi_{[0, \tau(s(\delta(a)))}.$ Observe that $x\notin B$. Indeed, by the claim preceding Proposition~\ref{preserves the spectral scale}, we know that the bounded function $z-x\in B$ and therefore $x\notin B$. The fact that $\pi(x)\notin \mathcal{B}$ now follows immediately from the fact that $\mu(\pi(x))=\mu(x)$ established in \cite[Proposition 3.3(ii)]{dePS_flow}.
We shall now finish the proof. Take
$b=\pi(y^{-1}x).$
Then
$$
\delta(a)b=\pi(y)b=\pi(y)\pi\left(y^{-1}x\right)=\pi(x).
$$
Computing $\pi(x)=\delta(a)b=\delta(ab)-a\delta(b)\in\mathcal{B}$, we arrive at the contradiction.
\end{proof}

 Recall that a linear subspace $\mathcal{J}$ of $S(\mathcal{M})$ is called an operator bimodule on
$\mathcal{M}$ if $AB,BA\in\mathcal{J}$ whenever $A \in \mathcal{J}$ and $B \in\mathcal{M}$ \cite[Definition 2.4.5]{LSZ}. If $\mathcal{M}$ is a finite factor, then every operator bimodule is a Calkin
operator space \cite[Lemma 2.4.6]{LSZ}.

\begin{cor}\label{cor_ext_KL}
 Let $\mathcal{M}$ be a II$_1$-factor and let $\mathcal{B}\subsetneq S(\mathcal{M})$ be a $\mathcal{M}$-bimodule.
 If $D:S(\mathcal{M})\to S(\mathcal{M})$ is a derivation and $D(S(\mathcal{M}))\subset \mathcal{B}$, then $D\equiv 0.$
\end{cor}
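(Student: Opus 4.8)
The plan is to deduce Corollary~\ref{cor_ext_KL} from Theorem~\ref{KL-new} essentially for free, the only task being to verify that the hypotheses of the theorem are met when $\mathcal{M}$ is a II$_1$-factor and $\mathcal{B}$ is merely assumed to be an $\mathcal{M}$-bimodule rather than a Calkin operator space.

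First I would recall the structure facts already recorded right before the statement: by \cite[Definition 2.4.5]{LSZ} an $\mathcal{M}$-bimodule (operator bimodule) is a linear subspace $\mathcal{J}\subseteq S(\mathcal{M})$ with $AB,BA\in\mathcal{J}$ for $A\in\mathcal{J}$, $B\in\mathcal{M}$, and by \cite[Lemma 2.4.6]{LSZ}, when $\mathcal{M}$ is a \emph{finite factor}, every operator bimodule is automatically a Calkin operator space. Since a II$_1$-factor is in particular a finite factor (and it is atomless, hence of the form required in Theorem~\ref{KL-new} — a type II$_1$ von Neumann algebra), the given bimodule $\mathcal{B}$ is a Calkin operator space. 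The hypothesis $\mathcal{B}\subsetneq S(\mathcal{M})$ is carried over verbatim.

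Having made this identification, I would simply invoke Theorem~\ref{KL-new}: the derivation $D\colon S(\mathcal{M})\to S(\mathcal{M})$ satisfies $D(S(\mathcal{M}))\subset\mathcal{B}$, where $\mathcal{B}$ is now known to be a Calkin operator space with $\mathcal{B}\subsetneq S(\mathcal{M})$, so the theorem gives $D\equiv 0$. This is the entire argument; the write-up is two sentences.

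There is no real obstacle here — the content is entirely in Theorem~\ref{KL-new} and in the cited structural lemma \cite[Lemma 2.4.6]{LSZ}. The only thing to be slightly careful about is terminology: one should make sure that ``operator bimodule'' in the sense of \cite[Definition 2.4.5]{LSZ} is exactly the notion of ``$\mathcal{M}$-bimodule'' used in the statement of the corollary (a linear subspace closed under left and right multiplication by elements of $\mathcal{M}$), and that the factor being of type II$_1$ indeed qualifies as a ``finite factor'' in \cite[Lemma 2.4.6]{LSZ} and as a ``type II$_1$ von Neumann algebra'' in Theorem~\ref{KL-new}; both are immediate. One could also remark, for completeness, that the empty space is excluded since $D$ takes values in $\mathcal{B}$ (so $0\in\mathcal{B}$), but this is not needed for the deduction.

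\begin{proof}
Since $\mathcal{M}$ is a II$_1$-factor, it is a finite factor, and therefore by \cite[Lemma 2.4.6]{LSZ} every $\mathcal{M}$-bimodule in $S(\mathcal{M})$ is a Calkin operator space. In particular $\mathcal{B}$ is a Calkin operator space with $\mathcal{B}\subsetneq S(\mathcal{M})$. A II$_1$-factor is a type II$_1$ von Neumann algebra, so Theorem~\ref{KL-new} applies to the derivation $D$ and yields $D\equiv 0$.
\end{proof}
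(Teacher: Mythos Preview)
Your proof is correct and follows exactly the paper's approach: the paper states \cite[Lemma 2.4.6]{LSZ} in the paragraph immediately preceding the corollary and leaves the deduction from Theorem~\ref{KL-new} implicit. There is nothing to add.
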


\section{Approximate derivative as a unique extension of the classical derivation $\frac{d}{dt}$}\label{sect1}

Let $\mathcal{A} = S[0,1]$ be the  $\ast$-algebra of all classes of Lebesgue measurable functions on
$[0,1]$ (as usual, the quotient is taken with respect to the relation \lq\lq equal almost everywhere\rq\rq), which is the Murray-von Neumann algebra associated with the finite von Neumann algebra $L_\infty [0,1]$ of all (classes of) bounded functions on $[0,1]$.
Consider the algebra $D[0,1]$ of (classes of) differentiable functions that is having almost everywhere finite derivation on
$[0,1]$.
Obviously, $D[0,1]$  is a $\ast$-subalgebra of $S[0,1]$.

We denote by $\lambda$ the Lebesgue measure on $[0,1]$. Sometimes, we denote by $[f]$  the class in $S[0,1],$ containing a measurable function $f$ on $[0,1].$ However, frequently we do not distinguish between $f$ and $[f]$.

In this section we show that the classical derivation $\frac{d}{dt}$ on the algebra $D[0,1]$ of all differentiable functions on $[0,1]$ (which is correctly defined, see Proposition \ref{eight} below) extends uniquely to  the algebra of all approximately differentiable functions  that is having almost everywhere finite approximative derivation. Furthermore, this algebra is the largest $*$-algebra in $S[0,1]$ which admits a unique extension of this derivation.

\subsection{The classical derivation $\frac{d}{dt}$ on $D[0,1]$}

 We start by showing that the classical derivation $\frac{d}{dt}$ is well-defined on the $*$-algebra $D[0,1]$.

Note that for any differentiable function $f\in S[0,1]$, the derivative $f'$  is a measurable function as the pointwise limit of a sequence on measurable functions.
\begin{prop}\label{eight}
Let  $f$ and $g$ be almost everywhere differentiable functions on $[0,1]$ and $f=g$ almost everywhere. Then the set of all points in which $f$ and  $g$ simultaneously have finite derivative has full measure, and the
derivatives $f'$ and $g'$ are measurable and equal almost everywhere.
\end{prop}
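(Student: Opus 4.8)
The plan is to argue that differentiability at a point, for a fixed function, is in a suitable sense a "local" property detectable from the germ of the function, so two functions agreeing almost everywhere must agree in their differentiability sets and derivative values almost everywhere. The key obstacle is that pointwise differentiability of $f$ at $t$ genuinely depends on the values of $f$ at \emph{every} point near $t$, not just on an almost-everywhere equivalence class; so one cannot naively replace $f$ by $g$ without an argument. The tool to overcome this is Lebesgue density (or the Lebesgue differentiation theorem for the indicator of the set $\{f\neq g\}$): almost every point of $[0,1]$ is a density point of $\{f=g\}$.

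First I would fix almost everywhere differentiable representatives $f$ and $g$ and let $N=\{t\in[0,1]: f(t)\neq g(t)\}$, which by hypothesis has $\lambda(N)=0$. Let $E$ be the set of points at which $f$ has a finite derivative; by hypothesis $\lambda([0,1]\setminus E)=0$. By the Lebesgue density theorem, the set $D_N$ of points $t$ with $\lim_{h\to 0^+}\tfrac{1}{2h}\lambda(N\cap(t-h,t+h))=0$ satisfies $\lambda([0,1]\setminus D_N)=0$. I claim that at every point $t\in E\cap D_N$ that lies in the interior, $g$ is also differentiable and $g'(t)=f'(t)$. Indeed, since $t$ is a density point of $N^c=\{f=g\}$, for any sequence $t_n\to t$ one can replace $t_n$ by nearby points $t_n'\in N^c$ with $|t_n'-t_n|=o(|t_n-t|)$ (choosing $t_n'$ in $N^c\cap(t_n-\epsilon_n,t_n+\epsilon_n)$, which is nonempty for small $\epsilon_n$ once $n$ is large, by the density estimate). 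Then, writing the difference quotient for $g$ at $t$ along $t_n$ and comparing with the difference quotient for $f$ at $t$ along $t_n'$ — using $g(t_n')=f(t_n')$ and $g(t)=f(t)$ (as $t\in N^c$ too, which we may also assume since $\lambda(N)=0$) — a short estimate shows $\frac{g(t_n)-g(t)}{t_n-t}\to f'(t)$; more carefully one should verify that the difference quotient of $g$ along $t_n$ differs from that of $f$ along $t_n'$ by a term that goes to $0$, which follows because $f$ is differentiable (hence locally Lipschitz-like in the Landau sense) at $t$. Symmetrically, swapping the roles of $f$ and $g$ and intersecting with the density set of $N$ again (it is the same set $N$), one gets that $g$ differentiable at $t$ with $g'(t)=f'(t)$ implies $f$ differentiable at $t$ with the same derivative.

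Hence the set $E'\cap D_N$, where $E'$ is the differentiability set of $g$, coincides with $E\cap D_N$ up to a null set, and on this common set of full measure $f'=g'$ pointwise; this proves the set where both have finite derivative has full measure and the two derivatives agree a.e. Measurability of $f'$ and $g'$ has already been observed in the line preceding the statement (each is a pointwise limit, along $h=1/n$ say, of the measurable difference-quotient functions on the full-measure set where the derivative exists, extended by $0$ off that set), so nothing further is needed there. I expect the only genuinely delicate point to be the $o(\cdot)$-replacement of an arbitrary approaching sequence $t_n$ by points of $\{f=g\}$ and the verification that this replacement does not disturb the limit of the difference quotient; this is where the density-point hypothesis is used in an essential way, and it is worth writing out the $\epsilon$–$\delta$ estimate carefully rather than leaving it to the reader.
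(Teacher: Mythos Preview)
Your central claim --- that $g$ is differentiable at every interior point $t\in E\cap D_N\cap N^c$ --- is false, and this is where the argument collapses. Take $f\equiv 0$, set $N=\{\tfrac12+\tfrac1n:n\ge 3\}$, and define $g=0$ off $N$ with $g(\tfrac12+\tfrac1n)=\tfrac1n$. Then $f=g$ a.e., both are a.e.\ differentiable, and $t=\tfrac12$ lies in $E\cap D_N\cap N^c$; yet along $t_n=\tfrac12+\tfrac1n\in N$ the difference quotient of $g$ is identically $1$, while along sequences in $N^c$ it is $0$, so $g'(\tfrac12)$ does not exist. The gap in your estimate is that for $t_n\in N$ you have no information about $g(t_n)$ whatsoever; replacing $t_n$ by a nearby $t_n'\in N^c$ controls $g(t_n')=f(t_n')$ but says nothing about $g(t_n)$. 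What the replacement argument actually proves is only that the \emph{approximate} derivative of $g$ at $t$ equals $f'(t)$. Note too that your forward claim makes no use of the hypothesis that $g$ is a.e.\ differentiable, which already signals trouble.

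The repair --- and the paper's argument --- is far simpler. Let $A$ be the set where $f=g$ and where both $f'$ and $g'$ exist and are finite; as an intersection of three full-measure sets, $A$ has full measure. On $A$ the function $h=f-g$ is differentiable with $h'=f'-g'$, while $h\equiv 0$ on the dense set $A$. Since $h'(t)$ exists for each $t\in A$, computing the limit along any sequence in $A$ converging to $t$ gives $h'(t)=0$, hence $f'=g'$ on $A$. No density-point argument or $o(\cdot)$ sequence replacement is needed.
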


\begin{proof}
Let $A$ be the set of all points $t\in [0,1]$ such that $f(t)=g(t)$ and both derivatives $f',\,  g'$ exist and finite.
By \cite[Theorem 3.1.4]{Fed}, $f'$ and $g'$ are measurable on $A.$  The function  $h=f-g$ has everywhere defined derivative $f'-g'$ on $A.$  Since  $h(t)=0$ for all $t\in A,$  the equality $h'(t)=0$ holds on $A.$  The proof is complete since the latter set has full measure.
\end{proof}

Proposition \ref{eight} allows us to correctly define the classical derivation $\partial:D[0,1]\to S[0,1]$.
\begin{defn}\label{def_classical_der}
We define the derivation $\partial:D[0,1]\to S[0,1]$ by setting
$$\partial([f])=[f'],\quad [f]\in D[0,1].$$
\end{defn}

The following proposition establishes that the classical derivation $\partial$ on the algebra $D[0,1]$ is non-expansive (see Definition \ref{def_nonexpansive}). In particular, results of \cite{BSCh06} are applicable to $\partial$.
\begin{prop}\label{nine}
The derivation  $\partial:D[0,1]\to S[0,1]$ is non-expansive.
\end{prop}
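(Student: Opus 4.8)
The goal is to show that the classical derivation $\partial:D[0,1]\to S[0,1]$ satisfies $s(\partial([f]))\leq s([f])$ for every $[f]\in D[0,1]$; equivalently, since we are working with Lebesgue measurable functions, that $f'(t)=0$ for almost every $t$ in the set $Z=\{t\in[0,1]:f(t)=0\}$.

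\medskip

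The plan is the following. Let $f$ be an a.e.\ differentiable function on $[0,1]$ and set $Z=\{t:f(t)=0\}$. Since $f$ is differentiable a.e., after discarding a null set we may assume $f$ is differentiable at every point of $Z$. The key elementary fact is that a function cannot have a nonzero derivative at a point which is a point of density of its zero set: more precisely, at every density point $t_0$ of $Z$ at which $f$ is differentiable, one has $f'(t_0)=0$. Indeed, if $f'(t_0)=c\neq 0$, then for $h$ small and nonzero, $f(t_0+h)=ch+o(h)$, so $|f(t_0+h)|\geq \tfrac{|c|}{2}|h|$ for all $h$ with $|h|<\eta$; hence the only point of $Z$ in $(t_0-\eta,t_0+\eta)$ is $t_0$ itself (recall $t_0\in Z$, so $f(t_0)=0$), which contradicts $t_0$ being a density point of $Z$ (the density of $Z$ at $t_0$ would be $0$, not $1$). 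So $f'\equiv 0$ on the set of density points of $Z$ at which $f$ is differentiable. By the Lebesgue density theorem, almost every point of $Z$ is a density point of $Z$, and almost every point of $Z$ is a point of differentiability of $f$; intersecting, $f'=0$ a.e.\ on $Z$. Since $s([f])=\chi_{\{t:f(t)\neq 0\}}$ (as a projection in $L_\infty[0,1]$, up to a null set) and $s(\partial([f]))=\chi_{\{t:f'(t)\neq 0\}}$, the containment $\{f'\neq 0\}\subseteq\{f\neq 0\}$ up to null sets is exactly $s(\partial([f]))\leq s([f])$, which is the definition of non-expansiveness.

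\medskip

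One small point to be careful about: the statement $s(\partial([f]))\leq s([f])$ is about classes, so I should phrase everything modulo null sets, and invoke Proposition~\ref{eight} to know that $\partial([f])=[f']$ is independent of the representative — this has already been established, so it can simply be cited. Likewise the measurability of $f'$ is already recorded before Proposition~\ref{eight}.

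\medskip

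I do not expect any serious obstacle here; the only thing that requires a moment's thought is the density-point argument, and that is a standard one-line computation with the definition of the derivative as sketched above. The Lebesgue density theorem does the rest. Thus the whole proof is short.
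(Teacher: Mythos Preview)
Your proof is correct and follows essentially the same approach as the paper: both argue that at density points of the zero set $\{f=0\}$ where $f$ is differentiable the derivative must vanish, then invoke the Lebesgue density theorem to conclude that $f'=0$ a.e.\ on $\{f=0\}$. Your write-up is in fact more detailed than the paper's (which leaves the density-point argument and the appeal to the density theorem largely implicit).
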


\begin{proof}
Let  $f$ be almost everywhere differentiable on $[0,1]$
and suppose that the set  $N(f):=\{t\in [0,1]:\ f(t)=0\}$ has non-zero measure. If  $t$ is a density point of  $N(f),$ and at this point there is a derivative  of $f,$ then we have $f'(t)=0.$  Thus $N(f)$ is a subset of the set $N(f'):=\{t\in [0,1]:\ f'(t)=0\}.$  This means that $s(\partial(f))=s(f')\leq s(f).$ The proof is complete.
\end{proof}

\begin{prop}Let $E$ be the fat Cantor set in $[0,1]$ (also known, as  Smith-Volterra-Cantor set). Then the characteristic function $\chi_E\notin D[0,1]$. In particular, the algebra $D[0,1]$ does not contain all projections from $S[0,1]$.
\end{prop}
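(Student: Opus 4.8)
The key point is that the fat Cantor set $E$ is a closed nowhere dense set of positive measure, and its characteristic function $\chi_E$ fails to have a finite derivative at a set of positive measure — indeed at every point of $\partial E$, which here has positive measure. So the strategy is: (1) recall the construction of $E$ and note $m(E) > 0$ while $E$ is nowhere dense and closed; (2) identify the topological boundary $\partial E = E \setminus \mathrm{int}(E)$, observe that since $E$ is nowhere dense we have $\mathrm{int}(E) = \emptyset$, hence $\partial E = E$, so $m(\partial E) = m(E) > 0$; (3) show that $\chi_E$ is not (classically, two-sidedly) differentiable at any point of $\partial E$, so the set of non-differentiability of $\chi_E$ has positive measure, whence $\chi_E \notin D[0,1]$; (4) conclude that $D[0,1]$ does not contain all projections of $S[0,1]$, since $\chi_E$ is a projection (it is a $\{0,1\}$-valued, hence self-adjoint idempotent, element of $S[0,1]$).

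For step (3), the core observation is the following. Let $t \in \partial E = E$. Since $E$ is closed, $t \in E$, so $\chi_E(t) = 1$. Since $E$ is nowhere dense, every neighbourhood of $t$ contains points not in $E$; pick a sequence $s_n \to t$, $s_n \notin E$, $s_n \ne t$. Then
$$\frac{\chi_E(s_n) - \chi_E(t)}{s_n - t} = \frac{0 - 1}{s_n - t} = \frac{-1}{s_n - t} \to \pm\infty$$
(its absolute value tends to $+\infty$), so no finite derivative $\chi_E'(t)$ can exist. Hence $\chi_E$ has finite derivative at no point of $E$; since $m(E) > 0$, the function $\chi_E$ is not almost everywhere differentiable, so $\chi_E \notin D[0,1]$ by definition of $D[0,1]$ (the algebra of classes with an a.e.\ finite derivative). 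Note one must be slightly careful about working with classes: any function equal to $\chi_E$ a.e.\ still fails to be a.e.\ differentiable, because changing $\chi_E$ on a null set still leaves it jumping near a.e.\ point of $E$ — more precisely, if $g = \chi_E$ a.e.\ and $g$ were a.e.\ differentiable, then on the full-measure set where $g = \chi_E$ and $g$ is differentiable, $g$ takes the value $1$ on a positive-measure subset of $E$ whose density points (which have full measure in it) force $g' = 0$ there by the argument of Proposition \ref{nine}, yet there are also points of the complement arbitrarily close; a clean way is to invoke Proposition \ref{eight} directly, which says the choice of representative does not matter for membership in $D[0,1]$.

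Finally, for step (4): $\chi_E = \chi_E^* = \chi_E^2$, so $[\chi_E]$ is a projection in $S[0,1]$, and we have just shown $[\chi_E] \notin D[0,1]$. Hence $D[0,1] \subsetneq S[0,1]$ does not contain every projection.

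I do not anticipate a serious obstacle here; the only mild subtlety is the passage to equivalence classes, handled by Proposition \ref{eight}, and perhaps spelling out why $E$ has empty interior (it is constructed by removing a dense collection of open intervals, so it contains no interval). Everything else is the standard fact that a characteristic function of a positive-measure nowhere-dense closed set is differentiable nowhere on that set.
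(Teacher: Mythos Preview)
Your proof is correct and follows essentially the same line as the paper's own argument: both exploit that $E$ is closed, nowhere dense, and of positive measure, and show that at each point $t\in E$ (the paper restricts to the density points of $E$, which is harmless but unnecessary) there are points of the complement arbitrarily close, forcing the difference quotients of $\chi_E$ to blow up. Your treatment is in fact slightly more careful than the paper's, since you explicitly address why the conclusion is independent of the chosen representative of the class $[\chi_E]$; to make that step fully rigorous, note that because $[0,1]\setminus E$ is open and dense, every neighbourhood of $t$ contains an open subinterval of the complement, and hence contains points $s_n\notin E$ lying in the full-measure set where the representative agrees with $\chi_E$.
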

\begin{proof}
The set $E$ is a closed nowhere dense subset of $[0,1]$ with $\lambda(E)>0$. Denote by  $F$ the set of all points from $E$, which are density points for  $E$. We have that  $\mu(F)=\mu(E)>0$ (due to Lebesgue density theorem). Since the set $E$ is nowhere dense, it follows that in every neighbourhood of a point $t\in F$ there exist points, which do not belong to $E$. It means that finite derivative $(\chi_E)'(t)$  does not exist at any point $t\in F$. Consequently,  $[\chi_E]\notin D[a,b]$, as required.
\end{proof}

\subsection{The $*$-algebra $AD[0,1]$ of approximately differentiable functions}\label{sec_AD_intro}

We recall firstly the concept of approximately differentiable functions.

Consider a Lebesgue measurable set $E\subset \mathbb{R},$
a measurable function $f: E \to \mathbb{R}$ and a point $t_0\in E,$  where $E$ has Lebesgue  density equal to $1.$ If the approximate limit
$$
f'_{ap}(t_0):=\textrm{ap}-\lim\limits_{t\to t_0}\frac{f(t)-f(t_0)}{t-t_0}
$$
exists and  it is finite, then it is called approximate derivative of the function $f$ at $t_0$ and the function is called approximately differentiable at $t_0$ (see \cite{Saks} for the details).

Note that by Lebesgue density theorem,  for any measurable subset $A$ of $[0,1]$ almost every point is Lebesgue density point of $A$. Therefore, the following definition makes sense.
\begin{defn}
Let $AD[0,1]$ be the set of all classes $[f]\in S[0,1]$, for which $f$ is approximately differentiable almost everywhere.
\end{defn}

Since a density point of two subsets $E$ and $F$ is a density point of the intersection $E\cap F$, it follows that the sum and product of two approximately differentiable functions  is again approximately differentiable. Therefore,  $AD[0,1]$ is a $*$-subalgebra of $S[0,1]$.

\begin{prop}\label{ad01}
The $*$-algebra $AD[0,1]$ is a regular proper $*$-subalgebra of $S[0,1]$  containing $D[0,1]$ and all projections from $L_\infty[0,1]$.
\end{prop}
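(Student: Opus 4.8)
The statement has four parts: (i) $AD[0,1]$ is a $*$-subalgebra of $S[0,1]$ (already noted in the text), (ii) it is \emph{proper}, (iii) it contains $D[0,1]$, and (iv) it contains all projections from $L_\infty[0,1]$, and finally (v) it is regular. Parts (iii) and (iv) are the easy ones. For (iii), any differentiable function is approximately differentiable at every point where it is differentiable, with the same derivative (the ordinary limit is in particular the approximate limit); since differentiability holds a.e.\ for $f\in D[0,1]$, we get $AD[0,1]\supseteq D[0,1]$. For (iv), a projection in $L_\infty[0,1]$ is a characteristic function $\chi_A$ of a measurable set $A$; by the Lebesgue density theorem, almost every point of $[0,1]$ is a density point of $A$ or of its complement $[0,1]\setminus A$, and at any such density point the difference quotient of $\chi_A$ tends approximately to $0$ (on a set of density $1$ near $t_0$ the function $\chi_A$ is constant equal to $\chi_A(t_0)$), so $\chi_A$ is approximately differentiable a.e.\ with approximate derivative $0$. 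Note this already shows the containment is strict in a strong sense compared with $D[0,1]$, since the preceding proposition exhibits a projection $\chi_E$ (fat Cantor set) lying in $AD[0,1]\setminus D[0,1]$.

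For properness, part (ii), I would exhibit an explicit $f\in S[0,1]$ that is \emph{nowhere} approximately differentiable (or at least fails to be approximately differentiable on a set of positive measure). The natural candidate is a typical Brownian path, or more elementarily a classical Weierstrass-type lacunary series $f(t)=\sum_n a_n \cos(b_n t)$ with the gap and coefficient conditions strong enough to preclude approximate differentiability; even simpler is to invoke the classical fact (Jarník, or the theory in Saks) that there exist continuous functions with no approximate derivative at any point, or to use a function whose graph has the property that for a.e.\ $t_0$ the difference quotient $\frac{f(t)-f(t_0)}{t-t_0}$ has no approximate limit. A clean self-contained choice: take $f$ to be a function that is $+\infty$ in the metric-density sense of oscillation, e.g.\ one constructed so that for each $t_0$ and each $M$ the set $\{t:\ |f(t)-f(t_0)|>M|t-t_0|\}$ has upper density $1$ at $t_0$; a lacunary series does this. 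I would state this as: pick any continuous nowhere approximately differentiable function (these exist, cf.\ \cite{Saks}), its class lies in $S[0,1]\setminus AD[0,1]$.

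For regularity, part (v): by the discussion of $*$-regular subalgebras in Section~\ref{sec2}, since $\mathbf 1\in AD[0,1]$ and $AD[0,1]\subseteq S[0,1]$ with $S[0,1]$ already $*$-regular, it suffices to show that for each $f\in AD[0,1]$ the partial inverse $i(f)$ again lies in $AD[0,1]$, and that $AD[0,1]$ contains the supports $l(f)=r(f)=s(f)=\chi_{\{f\neq 0\}}$ (which it does, by part (iv)). In the commutative setting $i(f)=1/f$ on $\{f\ne0\}$ and $0$ elsewhere. So the crux is: if $f$ is approximately differentiable a.e., then $g:=i(f)$ is approximately differentiable a.e.\ as well. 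At a.e.\ point $t_0$ with $f(t_0)\ne0$, $f$ is approximately continuous and approximately differentiable there, and since $f(t_0)\ne0$ one can write the difference quotient of $1/f$ as $-\frac{1}{f(t)f(t_0)}\cdot\frac{f(t)-f(t_0)}{t-t_0}$ and take approximate limits, using that $\text{ap-}\lim_{t\to t_0} f(t)=f(t_0)\ne 0$, to get $g'_{ap}(t_0)=-f'_{ap}(t_0)/f(t_0)^2$. At a.e.\ point $t_0$ with $f(t_0)=0$: here $g(t_0)=0$, and almost every such point is a density point of $N(f)=\{f=0\}$ by the Lebesgue density theorem; on the density-$1$ set $N(f)$ near $t_0$ we have $g\equiv 0$, so the difference quotient of $g$ approaches $0$ approximately, i.e.\ $g'_{ap}(t_0)=0$. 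Combining the two cases, $g=i(f)\in AD[0,1]$, and regularity follows.

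The main obstacle is part (ii), the properness: one must actually produce (or cleanly cite) a measurable function with no approximate derivative on a positive-measure set, and be careful that ``approximately differentiable a.e.'' is genuinely more restrictive than ``measurable.'' Everything else is a density-point bookkeeping exercise; in particular the regularity argument is routine once one observes that the only nontrivial check is stability of $AD[0,1]$ under $f\mapsto i(f)$, handled by the case split above on $\{f\ne0\}$ versus the density points of $\{f=0\}$.
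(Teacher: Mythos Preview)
Your proposal is correct and follows essentially the same route as the paper: the inclusions $D[0,1]\subset AD[0,1]$ and $\{\text{projections}\}\subset AD[0,1]$ via the Lebesgue density theorem, regularity via the partial inverse $i(f)$, and properness via a function with no approximate derivative. Your regularity argument is in fact more explicit than the paper's, which simply asserts that $g=i(f)$ is approximately differentiable a.e.\ without the case split you provide.

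One point deserves a sentence you did not write. For properness you say: take a continuous nowhere approximately differentiable $f$ (Jarn\'ik), then $[f]\notin AD[0,1]$. But $AD[0,1]$ is defined on \emph{classes}, so you must rule out that some $g\sim f$ is approximately differentiable a.e. The paper handles exactly this: if $g=f$ a.e.\ and $g$ has an approximate derivative at $t_0$, then since $\{t:f(t)=g(t)\}$ has density~$1$ at $t_0$, the approximate difference quotients of $f$ and $g$ at $t_0$ coincide on a set of full density, so $f$ also has an approximate derivative at $t_0$; hence $g$ fails to be approximately differentiable at a.e.\ point. With your choice of a \emph{nowhere} approximately differentiable $f$ this step is especially short, but it should still be stated---you flagged it as ``the main obstacle'' without actually dispatching it.
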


\begin{proof}
Let $f$ be a representative of  $[f]\in AD[0,1]$.  Then  $f_{ap}'$ is a measurable function and the function $g$ on $[0, 1]$ defined as
$$
\displaystyle g(t)=\left\{\begin{array}{ll}
            \frac{1}{f(t)}, & \hbox{if}\,\,\, f(t)\neq 0; \\
            0,              & \hbox{if}\,\,\, f(t)=0.
            \end{array}
            \right.
$$
is also approximately differentiable almost everywhere in   $[0,1].$  Hence,
$g\in AD[0,1]$  and $fgf=f.$ Thus, the algebra $AD[0,1]$ is regular.

Since any differentiable function is approximately differentiable, it follows that $D[0,1]\subset AD[0,1]$.

Let us show that $AD[0,1]$ contains  all projections from
$L_\infty[0,1].$ Indeed, take a measurable subset $A$ in $[0, 1].$ Consider a subset $A_0\subseteq A$ the set of all points of
density  of $A.$ By Lebesgue's density theorem we know that Lebesgue measure of the set $A\setminus A_0$ vanishes. Since the characteristic function $\chi_{A_0}$  has
an approximate derivative equal to zero almost everywhere in $A_0,$ it follows that the class containing the function
$\chi_A$ belong to $AD[0, 1].$
Hence  $AD[0,1]$ contains all projections from $L_\infty[0,1]$.

Finally, to show that $AD[0,1]$ is a proper subalgebra of $S[0,1]$, let $f$ be a measurable function $[0,1]$ which is not approximately differentiable almost everywhere on $[0,1]$ (such function exists as shown in \cite[Chap. IX, \S 11]{Saks}).
Let  $f \sim g$ and let  $g$ has an approximate derivative at point $t_0.$
Let $A\subseteq [0,1]$ be a measurable subset with $\lambda(A)>0$ such that $t_0\in A.$  Since $f \sim g,$ it follows that
the set  $A$ and its subset $A\cap \{t\in [0,1]: f(t)=g(t)\}$ have same measure   and therefore their sets  of all density points also coincide.
Therefore a function $f$ also has a finite approximate derivative at point $t_0.$ Hence, the function $g$ does not admit a finite approximate derivative
almost everywhere on $[0,1].$
Due to the arbitrary choice of $g\sim f$, we conclude that $[f] \notin AD[0,1].$ The proof is complete.
\end{proof}

We need the following characterization of the algebra $AD[0,1]$.

\begin{prop}\label{prop_AD_repr}The $*$-subalgebra $AD[0,1]$ coincides with the set of all  functions of the form
\begin{equation}\label{eq_AD_repr}
\sum\limits_{n=1}^\infty \chi_{A_n} g_n,
\end{equation}
with $ A_n \cap
A_k = \emptyset, n \neq k, \lambda(\bigcup A_n) = 1$ and $g_n\in C^1[0,1]$,  $n\in \mathbb{N}.$
\end{prop}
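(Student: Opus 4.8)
The plan is to prove the two inclusions separately, the easy one being that every function of the form \eqref{eq_AD_repr} lies in $AD[0,1]$, and the substantive one being the reverse. For the easy direction, suppose $f=\sum_{n=1}^\infty \chi_{A_n}g_n$ with the $A_n$ pairwise disjoint of full union and $g_n\in C^1[0,1]$. Almost every point $t_0$ lies in exactly one $A_n$ and is moreover a density point of that $A_n$ (by the Lebesgue density theorem applied to each $A_n$). At such a point, $f$ agrees with the differentiable function $g_n$ on a set of density $1$ through $t_0$, so the approximate limit of the difference quotient of $f$ at $t_0$ exists and equals $g_n'(t_0)$; hence $f$ is approximately differentiable a.e., i.e. $[f]\in AD[0,1]$.

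For the reverse inclusion, I would invoke the classical Whitney-type structure theorem for approximately differentiable functions (see \cite[Chap.\ IX, \S 11]{Saks}, or \cite[Theorem 3.1.8 / 3.1.16]{Fed}): if $f$ is approximately differentiable almost everywhere on $[0,1]$, then for every $\varepsilon>0$ there is a function $g\in C^1(\mathbb{R})$ (equivalently, by restriction, in $C^1[0,1]$) such that $\lambda(\{t: f(t)\neq g(t)\})<\varepsilon$. Applying this with $\varepsilon=2^{-n}$ produces $g_n\in C^1[0,1]$ with $\lambda(E_n)<2^{-n}$, where $E_n=\{f\neq g_n\}$. The complements $F_n=[0,1]\setminus E_n$ have $\lambda(\bigcup_n F_n)=1$ since $\lambda(\bigcap_n E_n)=0$. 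Now disjointify: set $A_1=F_1$ and $A_n=F_n\setminus(F_1\cup\cdots\cup F_{n-1})$ for $n\geq 2$. The $A_n$ are pairwise disjoint, $\lambda(\bigcup_n A_n)=\lambda(\bigcup_n F_n)=1$, and on $A_n$ we have $f=g_n$ (because $A_n\subseteq F_n$). Hence $f=\sum_{n=1}^\infty \chi_{A_n}g_n$ almost everywhere, which is exactly the form \eqref{eq_AD_repr}.

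The main obstacle is entirely contained in the citation step: one needs the precise form of the Lusin–Whitney approximation theorem stating that an a.e.\ approximately differentiable function coincides with a $C^1$ function off an arbitrarily small set, together with the (standard) fact that a $C^1$ function on a closed set extends to a $C^1$ function on $[0,1]$ — or, more simply, that one may take the approximant to be globally $C^1$ from the outset. I expect to cite \cite{Saks} or \cite{Fed} for this and not reprove it; if a self-contained argument were desired, one would have to reconstruct Whitney's extension argument, which is the only nontrivial input here. The disjointification and the verification that the resulting series reproduces $f$ a.e.\ are routine, as is the forward inclusion. I should also note for completeness that the sum in \eqref{eq_AD_repr} is to be read pointwise a.e.\ (on $A_n$ only the $n$-th term is nonzero), so no convergence issue arises; this matches the convention that elements of $S[0,1]$ are classes of a.e.-defined measurable functions.
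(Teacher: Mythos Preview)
Your proposal is correct and follows essentially the same approach as the paper: the forward inclusion via the Lebesgue density theorem, and the reverse inclusion via the Whitney--Federer approximation theorem \cite[Theorem 3.1.16]{Fed} asserting that an a.e.\ approximately differentiable function agrees with a $C^1$ function off a set of arbitrarily small measure. The paper's proof is terser, leaving the disjointification step implicit, whereas you spell it out; but the argument is the same.
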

\begin{proof}
We prove firstly that any function of the form \eqref{eq_AD_repr}
is approximately differentiable almost everywhere.

For each $i$ denote by $\widetilde{A_i}$ the set of all density points $t$ of $A_i$ such that there is a finite derivative $g'_i(t).$ Since $g_i$  is almost everywhere differentiable, due to Lebesgue density theorem \cite[Theorem 10.2]{Saks}, we obtain that $\lambda\left(A_i  \bigtriangleup \widetilde{A_i}\right)=0.$  Then
at each point  $t\in A_i\cap \widetilde{A_i}$  a function  $f$ has an approximate derivative equal to  $g'_i(t).$
Therefore  $f\in AD[0,1]$.

The converse inclusion follows from the fact that any approximately differentiable function is continuously differentiable outside of a set of arbitrarily small measure \cite[Theorem 3.1.16]{Fed}.
\end{proof}

Recall (see Section \ref{sec_der_prel}) that the complete metric $\rho$ on $S[0,1]$ is defined by
$$\rho(x,y)=\lambda(s(x-y)),\quad x,y\in S[0,1].$$ We say that a $*$-subalgebra $\mathcal{A}\subset S[0,1]$ is \emph{topologically closed} if $(\mathcal{A}, \rho)$ is a complete metric space.

\begin{prop}\label{prop_AD_smallest_reg}
The $*$-algebra $AD[0,1]$ is the smallest regular, topologically closed  $*$-subalgebra of $S[0,1]$  containing $D[0,1]$ and all projections from $S[0,1]$.
\end{prop}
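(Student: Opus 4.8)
The plan is to prove the two required properties separately: that $AD[0,1]$ is itself regular, topologically closed, contains $D[0,1]$ and all projections from $S[0,1]$ (so it is a legitimate candidate), and that it is contained in every other $*$-subalgebra $\mathcal{A}$ with these properties (minimality). The first half is almost entirely supplied by Proposition \ref{ad01}, which already gives regularity, the inclusion $D[0,1]\subset AD[0,1]$, and the fact that $AD[0,1]$ contains all projections from $L_\infty[0,1]$; since every projection of $S[0,1]$ is a characteristic function $\chi_A$ of a measurable set and hence lies in $L_\infty[0,1]$, this is in fact all projections of $S[0,1]$. So the only genuinely new point in the first half is topological closedness of $AD[0,1]$ in the $\rho$-metric, and here I would use the representation from Proposition \ref{prop_AD_repr}: an element of $AD[0,1]$ is exactly a function that is $C^1$ off a set of arbitrarily small measure. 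If $f_n\to f$ in $\rho$ with $f_n\in AD[0,1]$, then for a subsequence the supports $s(f-f_{n_k})$ have measure summing to a finite value, so by Borel--Cantelli $f$ agrees with some $f_{n_k}$ off a set of arbitrarily small measure; combining this with the fact that each $f_{n_k}$ is $C^1$ off a small set, $f$ is $C^1$ off a set of arbitrarily small measure, hence $f\in AD[0,1]$ by Proposition \ref{prop_AD_repr}.

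For minimality, let $\mathcal{A}$ be any regular, topologically closed $*$-subalgebra of $S[0,1]$ containing $D[0,1]$ and all projections. The goal is to show $AD[0,1]\subseteq\mathcal{A}$. By Proposition \ref{prop_AD_repr} every element of $AD[0,1]$ has the form $\sum_{n\ge 1}\chi_{A_n}g_n$ with the $A_n$ disjoint, $\lambda(\bigcup A_n)=1$, and $g_n\in C^1[0,1]\subseteq D[0,1]$. Each finite partial sum $\sum_{n=1}^{N}\chi_{A_n}g_n$ lies in $\mathcal{A}$, because $\chi_{A_n}\in\mathcal{A}$ (it is a projection) and $g_n\in D[0,1]\subseteq\mathcal{A}$, and $\mathcal{A}$ is an algebra. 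Now the tails $\sum_{n>N}\chi_{A_n}g_n$ have support contained in $\bigcup_{n>N}A_n$, whose measure tends to $0$ as $N\to\infty$; hence the partial sums converge in the $\rho$-metric to $\sum_{n\ge 1}\chi_{A_n}g_n$. Since $\mathcal{A}$ is $\rho$-closed, the limit lies in $\mathcal{A}$. Thus $AD[0,1]\subseteq\mathcal{A}$, which is exactly minimality.

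I expect the main obstacle to be the topological-closedness claim for $AD[0,1]$ itself — in particular, verifying cleanly that $\rho$-convergence lets one pass to a subsequence agreeing with the limit off a set of arbitrarily small measure (this is just the observation that $\rho(f_n,f)=\lambda(s(f_n-f))\to 0$ forces, along a subsequence, $\sum_k\lambda(s(f_{n_k}-f))<\infty$, so $f=f_{n_k}$ off the ``lim sup'' set), and then combining ``agrees with a $C^1$-off-a-small-set function off a small set'' into ``$C^1$ off a small set''. Everything else is bookkeeping: regularity and the projection/containment statements are inherited verbatim from Proposition \ref{ad01}, and the minimality direction is the short telescoping argument above. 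One should also note explicitly, for the minimality argument, that no regularity of $\mathcal{A}$ is actually used — only that it is a $\rho$-closed subalgebra containing $D[0,1]$ and all projections — so the statement is in fact slightly stronger than advertised; I would remark on this but keep the hypotheses as stated for symmetry with Proposition \ref{ad01}.
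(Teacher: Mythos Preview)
Your proposal is correct and follows essentially the same route as the paper: both halves rest on Proposition~\ref{ad01} together with the representation of Proposition~\ref{prop_AD_repr}, and your minimality argument (partial sums of $\sum\chi_{A_n}g_n$ lie in $\mathcal{A}$ and converge in $\rho$) is exactly the paper's. Two minor remarks: the Borel--Cantelli step is unnecessary, since $\rho(f_n,f)\to 0$ already says directly that for every $\varepsilon>0$ some $f_n$ agrees with $f$ off a set of measure $<\varepsilon$; and your closing observation that regularity of $\mathcal{A}$ is never used in the minimality direction is a genuine sharpening not noted in the paper.
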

\begin{proof}

By Proposition \ref{ad01} the algebra $AD[0,1]$ is regular and contains $D[0,1]$ and all projections from $S[0,1]$. We now show that $AD[0,1]$ is topologically closed.
Let $[f]$ be a $\rho$-limit point of $AD[0,1].$
Then for each
$n\in \mathbb{N}$ there is a measurable subset $A_n$ in  $s(f)$
such that  $\lambda(s(f) \backslash  A_n)<1/n$ and $[f \chi_{A_n}] \in AD[0,1]$. Hence,
$f=\sum\limits_{n=1}^\infty f_n$, where  $f_1=f \chi_{A_1},\,   f_n=f \chi_{A_n \setminus \cup_{k=1}^{n-1} A_k}$ for $n>1.$ By Proposition \ref{prop_AD_repr}, every $f_n$ is of the form \eqref{eq_AD_repr}, and therefore,  $f$ is also of the form \eqref{eq_AD_repr}. Using again Proposition \ref{prop_AD_repr} we conclude that $[f]\in AD[0,1]$, that is the algebra $AD[0,1]$ is topologically closed.

Let $\mathcal{A}\subset AD[0,1]$ be a regular, topologically closed $*$-subalgebra of $S[0,1]$, which contains $D[0,1]$ and all projections from $S[0,1]$. Let $f\in AD[0,1]$. By Proposition \ref{prop_AD_repr}, $f$ has the form
$$f=\sum\limits_{n=1}^\infty \chi_{A_n} g_n,$$ for some
with $ A_n \cap
A_k = \emptyset, n \neq k, \lambda(\bigcup A_n) = 1$ and $g_n\in C^1[0,1]$, $n\in \mathbb{N}.$ Note that the partial sums $\sum_{n=1}^k \chi_{A_n} g_n,\ k\in\mathbb{N}$ are contained in $\mathcal{A}$. By \cite[Proposition 2.7]{BSCh06} the series $f=\sum\limits_{n=1}^\infty \chi_{A_n} g_n,$ converges with respect  the metric $\rho$. Therefore, $f\in \mathcal{A}$, that is $\mathcal{A}=AD[0,1]$.
\end{proof}

\begin{defn}Let  $\mathcal{A}$ be a $\ast$-subalgebra of $S[0,1]$. Denote by $\mathcal{A}[x]$ the $\ast$-algebra of all polynomials
with coefficients from $\mathcal{A}.$ An element $a\in S[0,1]$ is said to be
\emph{integral} with respect to $\mathcal{A},$ if there exists a unitary polynomial $p\in  \mathcal{A}[x]$ such that $p(a)=0$.
 The algebra $\mathcal{A}$  is said to be \emph{integrally closed} if it contains all elements from $S[0,1]$, which are integral with respect to $\mathcal{A}$.

\end{defn}

\begin{prop}\label{prop_AD_int_closed}
The algebra $AD[0,1]$ is integrally closed.
\end{prop}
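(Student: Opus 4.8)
The plan is to show that if $a \in S[0,1]$ satisfies a monic polynomial equation
$$
a^n + c_{n-1}a^{n-1} + \cdots + c_1 a + c_0 = 0
$$
with $c_j \in AD[0,1]$, then $a \in AD[0,1]$. I would argue pointwise, almost everywhere. First, fix representatives of $a$ and of the coefficients $c_j$; by Proposition~\ref{prop_AD_repr} each $c_j$ agrees almost everywhere with a function that is, outside a set of arbitrarily small measure, equal to a $C^1$ function. Intersecting countably many such exhaustions, I obtain a single measurable set $X \subseteq [0,1]$ with $\lambda(X) = 1$ on which every $c_j$ is approximately differentiable, and moreover (by Lebesgue's density theorem and \cite[Theorem 3.1.16]{Fed}) I may shrink further so that on each piece of a countable partition of $X$ the coefficients restrict to honest $C^1$ functions. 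So it suffices to treat the case where the coefficients are $C^1$ on an interval (or on a measurable set all of whose points are density points where the $C^1$ restriction is genuinely differentiable), and to show $a$ is approximately differentiable at almost every point of that set.

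The key local step is: at a density point $t_0$ of $X$ where $a$ is, say, continuous along $X$ in the approximate sense and the $c_j$ are differentiable, the value $a(t_0)$ is a root of the polynomial $P_{t_0}(\lambda) = \lambda^n + c_{n-1}(t_0)\lambda^{n-1} + \cdots + c_0(t_0)$. I would invoke the implicit function theorem in the following form: if $a(t_0)$ is a \emph{simple} root of $P_{t_0}$, then near $t_0$ (in the density topology) $a$ coincides with the unique branch $\lambda(t)$ of roots of $P_t$ through $(t_0, a(t_0))$, and that branch is $C^1$ in $t$ because the coefficients are differentiable and $\partial P/\partial \lambda \neq 0$; hence $a$ is approximately differentiable at $t_0$. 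The points where $a(t_0)$ is a multiple root are handled by noting that these are points where $a(t_0)$ is also a root of the derivative polynomial $P'_{t_0}$; taking $\gcd$-type resultant arguments, or simply decomposing according to which irreducible factor $a$ satisfies, one reduces the multiplicity. More cleanly: over the measurable set where $a$ realizes a root of exact multiplicity $m$, factor out, or pass to $\partial_\lambda^{(m-1)}P$, for which $a$ becomes a simple root; this requires only that the construction partitions $[0,1]$ into at most $n$ measurable pieces, on each of which the previous simple-root argument applies.

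The main obstacle I expect is the measurability/selection issue: one must check that the set of $t$ where $a(t)$ is a root of a given multiplicity is measurable, that the chosen $C^1$ branch of roots actually coincides with the measurable function $a$ almost everywhere on each piece (not merely that \emph{some} root-branch does), and that the density-point hypotheses propagate correctly so that the implicit function theorem output is genuinely the approximate derivative of $a$ and not of a modification. This is where the argument must be run carefully, using that two measurable functions agreeing almost everywhere on a set have the same density points of that set (as exploited in the proof of Proposition~\ref{ad01}) and that approximate continuity of $a$ holds a.e. Once these bookkeeping points are settled, the conclusion $a \in AD[0,1]$ follows by reassembling the countably many pieces via Proposition~\ref{prop_AD_repr}.
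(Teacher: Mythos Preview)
Your outline is essentially the paper's own strategy: reduce via Proposition~\ref{prop_AD_repr} to the case where the coefficients are genuinely differentiable on pieces of a countable partition, then invoke the implicit function theorem at a point $t_0$ where $a(t_0)$ is a simple root to obtain a differentiable branch $g$ of roots near $t_0$, and finally reassemble. The two places where the paper is sharper than your sketch are the following.

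First, the simple-root reduction: rather than partitioning according to multiplicity and passing to $\partial_\lambda^{m-1}P$, the paper simply invokes \cite[Proposition~3.3]{BSCh06}, which says one may assume from the outset that $a(t)$ is a simple root of $P_t$ for almost every $t$. Your multiplicity-partition idea would also work, but the external lemma makes this step one line.

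Second, and more importantly, the matching step you flag as the ``main obstacle'' --- why $a$ agrees almost everywhere near $t_0$ with the IFT branch $g$ rather than with some other root branch --- is resolved in the paper by a concrete degree-lowering trick that your sketch lacks. Namely, on the set $B$ near $t_0$ where both $f$ and $g$ are roots of $p$, the function $\chi_B f$ is a root of the quotient polynomial $(p(x)-p(g))/(x-g)$ on the subset where $f\neq g$; this quotient has degree strictly less than $m$, and since one may take $m$ minimal (or argue inductively), this forces $\{t:f(t)\neq g(t)\}$ to have measure zero near $t_0$. This is the missing idea in your proposal: approximate continuity of $a$ alone does not obviously pin down the branch, but the algebraic factoring argument does.
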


\begin{proof}

Let us firstly consider the special case when $[f]\in S[0,1]$ is integral with respect to $D[0,1],$
i.e.,  $f$ is a root of unitary polynomial
$p(x) = x^m  + a_1 x^{m-1} + \ldots  + a_{m}$ with coefficients $a_k,$ $1\leq k\leq m$ being given by almost everywhere differentiable functions on $[0,1]$.
In addition, we can assume that for almost all   $t \in [0,1]$ the number
$f(t)$ is a simple root of a complex polynomial $p(\xi) = \xi^m  + a_1(t) \xi^{m-1} + \ldots  + a_{m}(t)$ (see \cite[Proposition 3.3]{BSCh06}).

If  $m = 1,$  then $f$ is a root of unitary polynomial
$p(x) = x + a_1,$ and therefore $f =  -a_1$  is  almost everywhere differentiable on
$[0,1].$
Therefore, we assume that $m\geq 2$.
By the choice of $p$, for almost all points  $t \in [0,1]$ the scalar
$f(t)$ is a simple root of a polynomial $p(\xi).$ Let us fix one of such points $t_0\in [0,1]$ and
set  $z_0 = (a_1(t_0), \ldots, a_{m}(t_0), f(t_0)) \in \mathbb{C}^{m+1}.$ Consider the function $F$ on $\mathbb{C}^{m+1}$ defined by
$$
F(\xi_1, \ldots, \xi_{m}, y) = y^{m} +
\xi_1 y^{m-1} + \ldots  + \xi_{m}.
$$
It is differentiable on
${\mathbb C}^{m+1},$
moreover,
\begin{center}
$F(z_0) = 0$ and $F'_y = my^{m-1} + (m-1) \xi_1 y^{m-2} + \ldots + \xi_{m-1}.$
\end{center}
Note that $F'_y(z_0) \neq 0,$ because  by our choose of $t_0,$ the number  $f(t_0)$ is a simple root of  $p(\xi).$
Since  $F'_y$ is continuous,  there is
a neighbourhood  $V(z_0)\subset\mathbb{C}^{m+1}$ of $z_0$ such that for any  $z \in V(z_0)$ we have  $F'_y(z) \neq 0.$
Moreover,
all other partial derivative is  $F'_{\xi_k} = y^{m-k}$ are continuous.
Hence, $F$ satisfy all conditions of the implicit function theorem
(see e.g.  \cite[Page 315]{LS}). Thus by implicit function theorem there exists a neighbourhood $W\subset\mathbb{C}^{m}$ of  $(a_{1}(t_0), \ldots, a_{m}(t_0))$  such that $W \subset \pi(V(z_0))$ (here a projection $\pi:\mathbb{C}^{m+1}\to \mathbb{C}^m$ defined as $\pi(\xi_1, \ldots, \xi_{m+1})=(\xi_1, \ldots, \xi_m)$) and there is a unique differentiable function
$G : W \to \mathbb{C}$ such that
\begin{center}
$G(a_1(t_0),...,a_m(t_0)) = f(t_0)$ and  $F(w, G(w)) = 0$ for all $w \in W.$
\end{center}
Take    $\varepsilon > 0$ such that
 $(a_1(t), \ldots, a_m(t))\in W$ for almost all $t\in (t_0 - \varepsilon, t_0 + \varepsilon).$ Then
 $g(t)= G(a_1(t), \ldots, a_m(t))$ is almost everywhere differentiable on $(t_0 - \varepsilon, t_0 +
\varepsilon).$
Since $F(w, G(w)) = 0$ for all $w \in W$ and  $(a_1(t), \ldots, a_m(t))\in W$ for almost all $t\in (t_0 - \varepsilon, t_0 + \varepsilon),$ it follows that
$p(g(t)=0$ for almost all $t\in (t_0 - \varepsilon, t_0 + \varepsilon).$
Thus  $\chi_Bg$ is a  root of the polynomial  $p,$ where $B=(t_0 - \varepsilon, t_0 + \varepsilon).$
Since $f$ is also root of the polynomial  $p,$ it follows that $\chi_Bf$ is  a root of the polynomial $\displaystyle \frac{p(x)-p(g(t))}{x-g(t)}$ whose degree is strictly less than  $m.$
 Hence, $$\lambda(\left\{t: f(t)=g(t)\right\}\cap (t_0 - \varepsilon, t_0 + \varepsilon))=0,$$ that is, $f(t)$ and $g(t)$ coincide almost everywhere in  $(t_0 - \varepsilon,t_0 + \varepsilon).$
This means that $f$ is an almost everywhere differentiable function.

Now we shall  consider the general case when $[f]\in S[0,1]$ is integral with respect to $AD[0,1].$ It follows from Proposition \ref{prop_AD_repr} that $f$ is a root of a unitary polynomial
$p(x) = x^m  + a_1 x^{m-1} + \ldots  + a_{m}$ with coefficients $a_i$ ($1\leq i\leq m$), where  all $a_i$ is of the form \eqref{eq_AD_repr}.
Let $a_i = \sum\limits_{n=1}^\infty \chi_{A_{i,n}} g_{i,n},$
where  $A_{i,n} \cap
A_{i,k} = \emptyset$ for  $n \neq k,$  $\lambda\left(\bigcup\limits_{n} A_{i,n}\right) = 1$ and $g_{i,n}$ is an almost everywhere differentiable function on $[0,1]$ for all $n, i\in \mathbb{N}.$
Further consider a partition of $[0,1]$ consisting from subsets of the form
$\bigcap\limits_{i=1}^m A_{i,n_i},$ $n_1, \ldots, n_m\in \mathbb{N}.$  For each  $\bigcap\limits_{i=1}^m A_{i,n_i}$ with a non zero Lebesgue measure there exists  a sequence of  disjoint intervals  $\left\{[a_j, b_j]: j\in J\right\}$  in  $[0,1]$ (depending on $\bigcap\limits_{i=1}^m A_{i,n_i}$)  such that $\lambda\left(\bigcap\limits_{i=1}^m A_{i,n_i}\triangle \bigcup\limits_{j}[a_j, b_j]\right)=0.$ Then
$\chi_{[a_j, b_j]}f(t)$ is a root of $p_{[a_j, b_j]}(x) = x^m  + \chi_{[a_j, b_j]}a_1 x^{m-1} + \ldots  + \chi_{[a_j, b_j]}a_{m},$ where  all coefficients $\chi_{[a_j, b_j]}a_i=\chi_{[a_j, b_j]}g_{i, n(j)}$ are almost everywhere differentiable on $[a_j, b_j].$ Considering instead $[0,1]$ intervals $[a_j, b_j]$ in  the previously treated special case, we obtain that  a function  $f$ coincide with an almost differentiable function on   $[a_j, b_j].$ Using again Proposition \ref{prop_AD_repr}, we conclude that  $[f]\in AD[0,1]$, that is the algebra $AD[0,1]$ is integrally closed, as required.
\end{proof}

 Propositions \ref{prop_AD_smallest_reg} and \ref{prop_AD_int_closed} imply the following
 \begin{cor}The algebra $AD[0,1]$ is the smallest regular, topologically and integrally closed $*$-subalgebra of $S[0,1]$, which contains $D[0,1]$ and all projections from $S[0,1]$.
 \end{cor}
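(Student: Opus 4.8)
The plan is to obtain the corollary by assembling Propositions~\ref{ad01}, \ref{prop_AD_smallest_reg} and~\ref{prop_AD_int_closed}; no new idea is needed, only the observation that the family of regular, topologically \emph{and} integrally closed $*$-subalgebras of $S[0,1]$ is contained in the family of regular, topologically closed $*$-subalgebras, so that minimality in the latter transfers to the former.

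First I would check that $AD[0,1]$ itself enjoys all the listed properties. By Proposition~\ref{ad01} it is a regular $*$-subalgebra of $S[0,1]$ containing $D[0,1]$ and all projections of $L_\infty[0,1]$; since every projection of $S[0,1]$ is the class of a characteristic function $\chi_A$ and is therefore bounded, the projections of $S[0,1]$ and of $L_\infty[0,1]$ coincide, so $AD[0,1]$ contains all projections of $S[0,1]$. That $AD[0,1]$ is topologically closed (for the rank metric $\rho$) is established in the first part of the proof of Proposition~\ref{prop_AD_smallest_reg}, and that it is integrally closed is exactly Proposition~\ref{prop_AD_int_closed}. Hence $AD[0,1]$ is a regular, topologically closed, integrally closed $*$-subalgebra of $S[0,1]$ containing $D[0,1]$ and all projections.

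It then remains to verify minimality. Let $\mathcal{A}\subseteq S[0,1]$ be any regular, topologically closed, integrally closed $*$-subalgebra containing $D[0,1]$ and all projections of $S[0,1]$. Discarding the integral-closedness hypothesis, $\mathcal{A}$ is in particular a regular, topologically closed $*$-subalgebra containing $D[0,1]$ and all projections of $S[0,1]$, so Proposition~\ref{prop_AD_smallest_reg} yields $AD[0,1]\subseteq\mathcal{A}$. Together with the previous step this proves the claim. There is no real obstacle here: the substance lies entirely in Propositions~\ref{prop_AD_smallest_reg} and~\ref{prop_AD_int_closed}, and the only point to keep an eye on is the bookkeeping one just noted, namely that adjoining the extra requirement of integral closedness cannot enlarge the minimal element precisely because $AD[0,1]$ already satisfies it.
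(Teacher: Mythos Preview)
Your proposal is correct and matches the paper's approach exactly: the paper simply states that the corollary follows from Propositions~\ref{prop_AD_smallest_reg} and~\ref{prop_AD_int_closed}, and your argument is the natural unpacking of that implication. The one observation you make explicit---that adding the extra integral-closedness requirement can only shrink the class of competing subalgebras, so minimality in the larger class transfers---is precisely the glue needed, and nothing more.
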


For a $\ast$-subalgebra $\mathcal{B}$ in $S[0,1]$ denote by $M_n(\mathcal{B})$ the $\ast$-algebra of all $n\times n$ matrices over $\mathcal{B}.$

The next Proposition will be used in the following Section.

\begin{prop}
\label{t1}
Let $h=h(t)$ be a measurable function on $[0,1]$ which is nowhere approximate differentiable.
Then for any matrix $A=\left(a_{i,j}\right)_{i,j=1}^n \in M_n(AD[0,1])$ a matrix $hE-A$ is not invertible in $M_n(S[0,1]),$
where $E$ is the unit matrix in $M_n(S[0,1]).$
\end{prop}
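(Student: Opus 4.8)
The plan is to argue by contradiction: suppose $hE - A$ is invertible in $M_n(S[0,1])$ for some $A = (a_{i,j}) \in M_n(AD[0,1])$. The strategy is to show that invertibility of $hE-A$ forces $h$ to satisfy a polynomial equation over $AD[0,1]$, which by the integral closedness of $AD[0,1]$ (Proposition~\ref{prop_AD_int_closed}) would put $h$ into $AD[0,1]$, contradicting the hypothesis that $h$ is nowhere approximately differentiable. The bridge between matrix invertibility and a scalar polynomial identity is the characteristic polynomial / Cayley--Hamilton argument, carried out fibrewise over $[0,1]$.

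First I would pass to a common representation. By Proposition~\ref{prop_AD_repr}, each $a_{i,j} = \sum_k \chi_{A^{(i,j)}_k} g^{(i,j)}_k$ with $g^{(i,j)}_k \in C^1[0,1]$; intersecting the partitions over all $i,j$ we obtain a single countable measurable partition $[0,1] = \bigsqcup_\ell B_\ell$ (up to null sets) on each piece of which every entry $a_{i,j}$ agrees a.e.\ with a $C^1$ function, and moreover (shrinking the $B_\ell$ to unions of intervals as in the proof of Proposition~\ref{prop_AD_int_closed}) we may take each $B_\ell$ to be a union of intervals. It therefore suffices to work on one such interval $I = B_\ell$: there $A$ restricts to a matrix $A_I$ with $C^1$ entries, and $hE - A$ invertible implies $(h - \cdot)$-shifted restriction $hE - A_I$ is invertible in $M_n(S(I))$. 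Let $b_I$ be the inverse. Multiplying the identity $(hE - A_I) b_I = E$ by the classical adjugate matrix $\mathrm{adj}(hE - A_I)$ and using $\mathrm{adj}(X) X = \det(X) E$, one gets $\det(hE - A_I)\, b_I = \mathrm{adj}(hE - A_I)$; since $\mathrm{adj}(hE-A_I)$ has entries that are polynomials in $h$ with $C^1[I]$ coefficients, and $b_I \in M_n(S(I))$, this shows $\det(hE - A_I)$ is invertible in $S(I)$ precisely when $hE - A_I$ is, and in that case $b_I = \det(hE-A_I)^{-1}\mathrm{adj}(hE-A_I)$ has entries that are rational functions of $h$ over $C^1[I]$. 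The key conclusion is that $\chi_I \det(hE - A_I) = \chi_I\, p(h)$ for the monic polynomial $p(x) = x^n + c_1 x^{n-1} + \dots + c_n$ whose coefficients $c_j \in C^1[I]$ are (up to sign) the elementary symmetric / trace-type functions of the $C^1$ entries of $A_I$, and that $p(h)$ is invertible in $S(I)$.

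Here is where I extract the polynomial identity that $AD[0,1]$ must contain. Invertibility of $p(h)$ on $I$ says $p(h)(t) \neq 0$ for a.e.\ $t\in I$, which is not yet a polynomial equation — so the Cayley--Hamilton route needs a small twist. Instead I would run the argument through a \emph{resolvent-type} identity: the entries of $(hE - A_I)^{-1}$ lie in $S(I)$ by assumption, and one of them, say the $(1,1)$ entry $r := e_1^{\mathsf T}(hE-A_I)^{-1} e_1$, is a ratio $q(h)/p(h)$ with $q \in C^1[I][x]$ of degree $n-1$ and $p$ as above. Then $p(h) r = q(h)$ on $I$, i.e.\ $h^n r + c_1 h^{n-1} r + \dots + c_n r - q(h) = 0$; since $r \in S(I)$ this is not monic in $h$. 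To get a genuine integral equation I would instead consider $h$ together with the finitely many elements $r_{i,j} := (hE - A_I)^{-1}_{i,j} \in S(I)$ and observe that the subalgebra of $S(I)$ they generate together with $C^1[I]$ is a \emph{finitely generated module} over $C^1[I][h]$ — indeed from $(hE - A_I)(r_{i,j}) = (\delta_{i,j})$ one reads off that each $h\, r_{i,j}$ is a $C^1[I]$-combination of the $r_{k,l}$ plus a $C^1[I]$-scalar, so $C^1[I][h, (r_{i,j})]$ is spanned over $C^1[I][h]$ by $1$ and the $r_{i,j}$; a standard determinant-trick (Cayley--Hamilton for the multiplication-by-$h$ operator on this module, exactly as in the proof that module-finite extensions are integral) then produces a \emph{monic} polynomial $P \in C^1[I][x] \subseteq AD[0,1][x]$ with $P(h)\chi_I = 0$. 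Doing this on each interval piece $B_\ell$ and assembling — using that $AD[0,1]$ is closed under the countable $\chi$-partition sums (Proposition~\ref{prop_AD_repr}) and taking the product of the local monic polynomials to clear coefficients, exactly as in the general-case step of Proposition~\ref{prop_AD_int_closed} — yields a single monic $\widetilde P \in AD[0,1][x]$ with $\widetilde P(h) = 0$. By Proposition~\ref{prop_AD_int_closed}, $AD[0,1]$ is integrally closed, so $[h] \in AD[0,1]$, i.e.\ $h$ is approximately differentiable a.e., contradicting the hypothesis.

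The main obstacle I anticipate is the bookkeeping around the non-monic-ness: naively $\det(hE-A_I)$ gives a monic polynomial of degree $n$, but one must be careful that invertibility of $hE - A_I$ in $M_n(S(I))$ really is equivalent to $\det(hE-A_I) = p(h)$ being invertible in $S(I)$ (this is the adjugate argument and is routine once stated), and then that the resolvent entries being in $S(I)$ is what upgrades "$p(h)$ invertible" to "$h$ satisfies a monic polynomial over $AD[0,1]$" — the module-finiteness / determinant-trick step is the substantive point and must be written out with care, including checking that the resulting polynomial's coefficients genuinely land in $C^1[I]$ and not merely in $S(I)$. A secondary technical nuisance is the reduction to intervals and the reassembly across the countable partition, but this is entirely parallel to the corresponding passage in the proof of Proposition~\ref{prop_AD_int_closed}, so it can be handled by citing that argument.
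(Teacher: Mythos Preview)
There is a genuine problem, but it originates in the paper's statement, not in your reasoning: Proposition~\ref{t1} as printed contains a typo. The correct conclusion---as is clear both from the paper's own proof and from the way the proposition is applied in the proof of Proposition~\ref{p4} (``appealing to Proposition~\ref{t1}, we infer that the matrix \ldots\ is invertible'')---is that $hE-A$ \emph{is} invertible in $M_n(S[0,1])$, not that it is not. With the corrected statement, the paper's argument is two lines: if $hE-A$ were not invertible, then by \cite[Proposition 1.3.9(ii)]{Dales} its $S[0,1]$-valued determinant would fail to be invertible, hence would vanish on a set $X\subset[0,1]$ of positive measure; but $\chi_X\det(hE-A)$ is a monic polynomial in $\chi_X h$ with coefficients in $AD[0,1]$, so by the integral closedness of $AD[0,1]$ (Proposition~\ref{prop_AD_int_closed}) one gets $\chi_X h\in AD[0,1]$, contradicting the hypothesis that $h$ is nowhere approximately differentiable.

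Your proposal attempts to prove the (false) statement as printed, and the obstacle you yourself flagged---that invertibility of $p(h)=\det(hE-A)$ gives only $p(h)\neq 0$ a.e., not a polynomial identity---is real and cannot be circumvented by the module-finiteness trick you sketch. Concretely, the $C^1[I]$-module $M$ spanned by $1$ and the entries $r_{ij}$ of $(hE-A_I)^{-1}$ is \emph{not} stable under multiplication by $h$: the relation $(hE-A_I)R=E$ indeed gives $h\,r_{ij}=\sum_k a_{ik}r_{kj}+\delta_{ij}\in M$, but $h\cdot 1=h$ has no reason to lie in $M$, so the determinant-trick/Cayley--Hamilton step does not apply. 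In fact the printed statement is already false for $A=0$: then $hE$ is invertible in $M_n(S[0,1])$ whenever $h$ is a.e.\ nonzero, and there certainly exist nowhere approximately differentiable functions bounded away from zero. So your difficulty was the right signal---the statement you were asked to prove is simply the wrong one.
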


\begin{proof}
By  \cite[Proposition 1.3.9 (ii)]{Dales} the matrix $hE-A$ is invertible if and only if its $S[0,1]$-valued determinant $\det(hE-A)$ is invertible in $S[0,1].$ Suppose that  $\det(hE-A)$ is not invertible. This means that there exists a measurable subset  $X$ in $[0,1]$ with a non zero measure such that $\chi_X\det(hE-A)=0.$
Note that $\chi_X\det(hE-A)$ is a unitary polynomial over  $AD[0,1]$ of variable  $\chi_X h.$ By Proposition~\ref{prop_AD_int_closed}, the algebra $AD[0,1]$
is integrally closed, and therefore $\chi_X h\in AD[0,1].$ Hence  $h$ is approximately differentiable on density points $X,$ which contradicts with the choice of $h.$
\end{proof}

\subsection{Approximate derivative as the largest extension of the classical derivative}

In this subsection we show that $AD[0,1]$ is the largest $*$-subalgebra of $S[0,1]$, which admits unique extension of the classical derivation $\partial:D[0,1]\to S[0,1]$ (see Definition \ref{def_classical_der}). We start with the following

\begin{prop}\label{prop_partial_AD_nonexpansive}
Let $AD[0,1]$ be the $*$-subalgebra of $S[0,1]$ of all approximately differentiable functions. There exists a unique non-expansive derivation $\partial_{AD}:AD[0,1]\to S[0,1]$, which extends the classical derivation $\partial:D[0,1]\to S[0,1]$.
\end{prop}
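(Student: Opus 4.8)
The plan is to define $\partial_{AD}$ on a function $f\in AD[0,1]$ by the obvious formula $\partial_{AD}([f])=[f'_{ap}]$, where $f'_{ap}$ is the approximate derivative, which exists almost everywhere by definition of $AD[0,1]$; the first task is to check this is well-defined, i.e.\ independent of the representative. This is the analogue of Proposition~\ref{eight} for approximate derivatives: if $f=g$ a.e.\ and both are approximately differentiable a.e., then on the set $A$ where $f=g$, $f'_{ap}$ exists, and $g'_{ap}$ exists, almost every point of $A$ is a density point of $A$, and at such a point the difference quotient of $h=f-g$ restricted to points of $A$ converges to $0$ in approximate sense; since $A$ has full measure in the set where both quantities exist, $f'_{ap}=g'_{ap}$ a.e. Measurability of $f'_{ap}$ follows since, by Proposition~\ref{prop_AD_repr}, $f=\sum_n\chi_{A_n}g_n$ with $g_n\in C^1[0,1]$, so $f'_{ap}=\sum_n\chi_{A_n}g_n'$ a.e., a countable combination of measurable functions.

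Next I would verify that $\partial_{AD}$ is a derivation. Linearity is immediate from linearity of the approximate limit. For the Leibniz rule, at a common density point $t_0$ of the relevant sets where both $f$ and $g$ are approximately differentiable, the standard product-rule computation for the approximate difference quotient of $fg$ goes through exactly as in the classical case (using that a density point of $E$ and of $F$ is a density point of $E\cap F$, already invoked in the text when proving $AD[0,1]$ is an algebra), giving $(fg)'_{ap}(t_0)=f'_{ap}(t_0)g(t_0)+f(t_0)g'_{ap}(t_0)$ for a.e.\ $t_0$. Hence $\partial_{AD}(fg)=\partial_{AD}(f)g+f\partial_{AD}(g)$. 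That $\partial_{AD}$ extends $\partial$ is clear since an ordinary derivative is an approximate derivative. Non-expansiveness, $s(\partial_{AD}(f))\le s(f)$, is proved exactly as in Proposition~\ref{nine}: if $N(f)=\{t:f(t)=0\}$ has positive measure, then at a density point $t_0$ of $N(f)$ at which $f'_{ap}(t_0)$ exists one has $f'_{ap}(t_0)=0$ (the approximate difference quotient is $0$ along the full-density set $N(f)$), so $N(f)\subseteq N(f'_{ap})$ up to null sets, giving $s(f'_{ap})\le s(f)$.

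Finally, uniqueness. Suppose $\delta:AD[0,1]\to S[0,1]$ is another non-expansive derivation extending $\partial$. Then $\eta:=\delta-\partial_{AD}$ is a non-expansive derivation on $AD[0,1]$ (the difference of two non-expansive derivations is non-expansive since $s(\eta(x))\le s(\delta(x))\vee s(\partial_{AD}(x))\le s(x)$) which vanishes on $D[0,1]$, in particular on $C^1[0,1]$ and on all projections $\chi_A\in L_\infty[0,1]$. For any $f=\sum_n\chi_{A_n}g_n\in AD[0,1]$ as in Proposition~\ref{prop_AD_repr}, non-expansiveness forces $\eta$ to be "local": $\chi_{A_n}\eta(f)=\chi_{A_n}\eta(\chi_{A_n}f)$, and since $\eta(\chi_{A_n})=0$ and $\eta$ is a derivation, $\chi_{A_n}\eta(\chi_{A_n}f)=\chi_{A_n}\eta(f)$ — more to the point, $\chi_{A_n}\eta(f)=\eta(\chi_{A_n}f)-\eta(\chi_{A_n})f=\eta(\chi_{A_n}g_n)$, and writing $\chi_{A_n}g_n=\chi_{A_n}\cdot g_n$ with $g_n\in C^1[0,1]\subseteq D[0,1]$ gives $\eta(\chi_{A_n}g_n)=\eta(\chi_{A_n})g_n+\chi_{A_n}\eta(g_n)=0$. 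Hence $\chi_{A_n}\eta(f)=0$ for all $n$, and since $\lambda(\bigcup_nA_n)=1$ we get $\eta(f)=0$. Thus $\delta=\partial_{AD}$. I expect the main obstacle to be the well-definedness step and the precise justification that $\eta$ annihilates $f$ via these localisation identities; the former is a careful but routine density-point argument (the approximate analogue of Proposition~\ref{eight}), and the latter hinges on combining non-expansiveness with the explicit representation of Proposition~\ref{prop_AD_repr} and the already-established fact that $\eta$ kills $D[0,1]$ and all projections.
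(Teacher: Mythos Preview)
Your proof is correct, but it proceeds by a genuinely different route than the paper's. The paper argues abstractly: it invokes the extension machinery of \cite{BSCh06} (Propositions~2.4--2.6) to show that a non-expansive derivation on $D[0,1]$ extends uniquely to the smallest regular, $\rho$-closed $*$-subalgebra of $S[0,1]$ containing $D[0,1]$ and all projections, and then appeals to Proposition~\ref{prop_AD_smallest_reg} to identify that subalgebra with $AD[0,1]$. In particular the paper never writes down $\partial_{AD}$ explicitly; existence and uniqueness come packaged together from the external results.

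Your approach is concrete: you define $\partial_{AD}([f])=[f'_{ap}]$ directly, and verify well-definedness, the Leibniz rule, non-expansiveness, and uniqueness by hand, the last via the representation of Proposition~\ref{prop_AD_repr}. This has the advantage of being self-contained (no appeal to \cite{BSCh06}) and of exhibiting the extension explicitly as the approximate derivative. It is worth noting that your uniqueness argument is actually stronger than required: once you observe that any derivation on the commutative algebra $AD[0,1]$ kills idempotents, the Leibniz identity $\eta(\chi_{A_n}f)=\chi_{A_n}\eta(f)$ follows without using non-expansiveness of $\eta$ at all, so in fact \emph{any} derivation on $AD[0,1]$ extending $\partial$ must equal $\partial_{AD}$. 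The paper's route, by contrast, yields only uniqueness among non-expansive extensions (though it does situate the result within a general framework that is reused elsewhere).
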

\begin{proof}
By Proposition \ref{nine} the derivation $\partial:D[0,1]\to S[0,1]$ is nonexpansive.
By \cite[Proposition 2.4, Proposition 2.5]{BSCh06}, there exists a unique non-expansive extension $\delta$ of the derivation $\partial$ up to the least regular subalgebra of $S[0,1]$ generated by $D[0,1]$ and all
projections from $S[0,1]$. By \cite[Proposition 2.6]{BSCh06} the derivation $\delta$ can be extended uniquely up to a derivation on the least topologically closed regular subalgebra of $S[0,1]$ containing $D[0,1]$ and all projections from $S[0,1]$. However, by Proposition \ref{prop_AD_smallest_reg} above the latter algebra coincides with $AD[0,1]$. Thus, there exists a unique extension $\partial_{AD}:AD[0,1]\to S[0,1]$ of the derivation $\partial:D[0,1]\to S[0,1]$.
\end{proof}

To prove that $AD[0,1]$ is the largest $*$-subalgebra of $S[0,1]$, which admits unique extension of $\partial:D[0,1]\to S[0,1]$ we recall the following notions.
Let, as before, $\mathcal{A}[x]$ denote the $\ast$-algebra of all polynomials
with coefficients from a subalgebra  $\mathcal{A}\subset S[0,1]$.
An element $a\in S[0,1]$ is said to be
\begin{itemize}
\item[] \textit{algebraic} with respect to $\mathcal{A},$ if there exists a non-zero polynomial $p\in  \mathcal{A}[x],$
such that $p(a) = 0;$
\item[] \textit{transcendental} with respect to $\mathcal{A},$ if $a$ is not algebraic over $\mathcal{A};$
\item[] \textit{weakly transcendental} with respect to $\mathcal{A},$ if $a\neq 0,$ and for any non-zero
idempotent $e \leq  s(a)$ the element $ea$ is not integral with respect to $\mathcal{A}.$
\end{itemize}

\begin{thm} The $*$-algebra $AD[0,1]$ is the largest subalgebra of $S[0,1]$, containing the algebra $D[0,1]$, which admits unique extension of the derivation $\partial:D[0,1]\to S[0,1]$.
\end{thm}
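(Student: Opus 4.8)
The plan is to prove the theorem in two directions. First, by Proposition~\ref{prop_partial_AD_nonexpansive}, the classical derivation $\partial:D[0,1]\to S[0,1]$ does admit a (unique) extension to all of $AD[0,1]$, namely the approximate derivative $\partial_{AD}$; so $AD[0,1]$ is \emph{a} subalgebra admitting a unique extension. It remains to show it is the \emph{largest} such: if $\mathcal{A}\subseteq S[0,1]$ is a $*$-subalgebra with $D[0,1]\subseteq\mathcal{A}$ that admits a unique extension of $\partial$, then $\mathcal{A}\subseteq AD[0,1]$. Equivalently, I would show that if $\mathcal{A}$ contains any element $f$ which is not approximately differentiable almost everywhere, then $\partial$ admits at least two distinct extensions to the $*$-algebra generated by $D[0,1]$ and $f$ (hence $\mathcal{A}$ admits more than one extension of $\partial$, contradicting uniqueness).

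So suppose $f\in\mathcal{A}\setminus AD[0,1]$. By passing to $\chi_X f$ for a suitable measurable set $X$ of positive measure, I would reduce to the case where $f$ fails to be approximately differentiable at every density point of its support — i.e., $f$ is, up to restriction, \emph{nowhere approximately differentiable} on a set of positive measure. The key dichotomy is whether such an $f$ is algebraic over $AD[0,1]$ or not. Proposition~\ref{prop_AD_int_closed} (integral closure of $AD[0,1]$) together with Proposition~\ref{t1} shows that no nonzero cut-down $\chi_X f$ can satisfy a \emph{monic} polynomial over $AD[0,1]$; a standard clearing-of-leading-coefficients argument (splitting $[0,1]$ according to where the leading coefficient vanishes or not, and using regularity of $AD[0,1]$ to invert it where nonzero) upgrades this to: $f$ is \emph{weakly transcendental} with respect to $AD[0,1]$, and in particular over $D[0,1]$. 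The upshot is that the subalgebra generated by $D[0,1]$ and $f$ contains a copy of a polynomial ring $D[0,1][f]$ that is ``free enough'' — no nontrivial monic relations cut down by idempotents — so that one has genuine freedom in defining a derivation on it.

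Concretely, the plan is: since $f$ is weakly transcendental over $D[0,1]$, for every nonzero idempotent $e\le s(f)$ the element $ef$ is transcendental over $D[0,1]$ in the strong sense that $D[0,1][f]$, localized on $s(f)$, behaves like a polynomial algebra. One then defines, for an arbitrary choice of element $h_0\in S[0,1]$ with $s(h_0)\le s(f)$, a derivation $\partial^{(h_0)}$ on $D[0,1][f]$ by declaring $\partial^{(h_0)}|_{D[0,1]}=\partial$ and $\partial^{(h_0)}(f)=h_0$, and extending by the Leibniz rule. The ``no monic relations'' property (Proposition~\ref{t1}/\ref{prop_AD_int_closed}) is exactly what guarantees this is well-defined: any algebraic relation $p(f)=0$ with coefficients in $D[0,1]$, after localizing and clearing the leading term, would force $f$ to be integral over $D[0,1]$ on a set of positive measure, contradicting weak transcendence; hence the only relations are the trivial ones and the assignment $f\mapsto h_0$ extends consistently. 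Choosing $h_0=0$ and $h_0=s(f)$ (say) gives two distinct derivations on $D[0,1][f]\subseteq\mathcal{A}$ agreeing with $\partial$ on $D[0,1]$. This contradicts the hypothesis that $\mathcal{A}$ admits a \emph{unique} extension of $\partial$, forcing $f\in AD[0,1]$ and hence $\mathcal{A}\subseteq AD[0,1]$.

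The main obstacle, as I see it, is making the ``well-definedness of $\partial^{(h_0)}$ on $D[0,1][f]$'' argument airtight: one must handle the fact that $S[0,1]$ is not a field but a von Neumann regular ring with lots of idempotents, so ``transcendental'' has to be interpreted locally (on each piece of a partition of the support), and the polynomial relations satisfied by $f$ may be monic only after cutting down by idempotents. The correct bookkeeping is: partition $s(f)$ into countably many measurable pieces on each of which $f$ either satisfies some monic relation over $D[0,1]$ (impossible on a piece of positive measure, by Proposition~\ref{t1}) or satisfies none; on the pieces with no monic relation, the derivation value at $f$ is genuinely free. Managing this partition, and checking that the freely-defined derivation respects the (possibly infinitely many) non-monic relations — which are automatically consequences of no relations at all, once leading coefficients are inverted via regularity — is where the real care is needed. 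Once that is in place, the rest is the bookkeeping of the Leibniz rule and the observation that two different choices of $h_0$ on a positive-measure piece of $s(f)$ yield genuinely different derivations.
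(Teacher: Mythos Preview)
Your overall strategy---reduce via idempotents to an element that is weakly transcendental over $AD[0,1]$, using the integral closure of $AD[0,1]$ (Proposition~\ref{prop_AD_int_closed}), and then exploit the freedom in assigning a derivation value at such an element---is exactly the paper's approach.

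There is, however, a genuine gap in your final step. You construct two distinct derivations $\partial^{(0)}$ and $\partial^{(s(f))}$ on the subalgebra $D[0,1][f]\subseteq\mathcal{A}$ and then assert that this contradicts the hypothesis that $\mathcal{A}$ admits a unique extension of $\partial$. It does not: uniqueness on $\mathcal{A}$ only says there is exactly one derivation $\delta:\mathcal{A}\to S[0,1]$ restricting to $\partial$ on $D[0,1]$; that derivation restricts to \emph{one} particular extension on $D[0,1][f]$, but the existence of other extensions on the smaller algebra $D[0,1][f]$ that might not prolong to $\mathcal{A}$ is no contradiction. You must actually produce two derivations on $\mathcal{A}$ itself.

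The paper closes this gap by working over $AD[0,1]$ rather than $D[0,1]$ and invoking \cite[Proposition~3.7]{BSCh06}: once $a(\mathbf{1}-e_a)$ is weakly transcendental over $AD[0,1]$, that proposition furnishes derivations $\delta_1,\delta_2$ extending $\partial_{AD}$ on the algebra $\mathcal{B}$ generated by $\mathcal{A}$ and $AD[0,1]$, with prescribed (different) values at $a(\mathbf{1}-e_a)$; these then restrict to two distinct derivations on $\mathcal{A}$. In other words, the nontrivial extension theorem---that a derivation on a regular commutative subalgebra of $S[0,1]$ can always be pushed up, with freedom at any weakly transcendental element---is the missing ingredient. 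Your careful ``well-definedness on $D[0,1][f]$'' discussion is part of this, but you also need the further (Zorn-type, transcendental/algebraic dichotomy) extension from $D[0,1][f]$ up to $\mathcal{A}$, which is precisely what \cite[Proposition~3.7]{BSCh06} supplies.
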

\begin{proof}

Suppose that $\mathcal{A}$ is a subalgebra of $S[0,1]$, such that $D[0,1]\subset AD[0,1]\subset \mathcal{A}$ and there exists a unique extension $\delta:\mathcal{A}\to S[0,1]$ of the derivation $\partial:D[0,1]\to S[0,1]$. We claim that $\mathcal{A}\subset AD[0,1]$.

Assume the contrary. Then there exists an element  $a \in \mathcal{A}$ such that $a
\notin AD[0,1]$. Let $\nabla$ be Boolean algebra of all idempotents in $S[0,1]$.
The subset  $$\nabla_a=\left\{e\in \nabla: ea \in AD[0,1]\right\}$$  is non empty, since $0\in \nabla_a.$ Let  $$e_a=\sup\limits_{e\in \nabla_a} e.$$
Take any elements
$e,\, g\in \nabla_a.$ Since  $(e\vee g)a =e a +(g-e\wedge g)a\in AD[0,1],$
it follows that $e\vee g\in \nabla_a,$ in other words the set $\nabla_a$ is closed under the operation $\vee.$
Therefore there exists an increasing net  $\{e_i\}\subset \nabla_a$ such that
$e_i \uparrow e_a.$ Thus $e_a \in \nabla_a,$ because $e_i a  \stackrel{\rho}\longrightarrow  e_a a$ and $AD[0,1]$ is $\rho$-closed (see Proposition \ref{prop_AD_smallest_reg}).

By the assumption, we have $a
\notin AD[0,1],$ and therefore $e_a\neq\mathbf{1}.$ By a construction, $e_a$ is the greatest element of $\nabla_a,$ hence  $fa \notin AD[0,1]$ for all $0\neq f\leq \mathbf{1}-e_a.$  It follows that
$a(\mathbf{1}-e_a)$ is a weakly transcendental element with respect to  $AD[0,1].$
Otherwise, there is an idempotent    $0\neq f\leq \mathbf{1}-e_a$ such that $fa$ is integral with respect to $AD[0,1]$.
But $AD[0,1]$ is integrally closed (see Proposition \ref{prop_AD_int_closed}), and hence we should have $fa\in AD[0,1],$ which is impossible due to the maximality of the element $e_a.$

Let  $\mathcal{B}$ be  a $\ast$-subalgebra generated  by $\mathcal{A}$ and
$AD[0,1].$ Since $a(\mathbf{1}-e_a)$ is a weakly transcendental element with respect to  $AD[0,1],$ \cite[Proposition~3.7]{BSCh06} implies that on $\mathcal{B}$
there exist  derivations  $\delta_1$ and  $\delta_2,$
extending  $\partial_{AD}$ such that $\delta_1(a(\mathbf{1}-e_a)) = 0$ and $\delta_2(a(\mathbf{1}-e_a)) = \mathbf{1}-e_a.$
Hence  $\delta_1(a) =
\partial_{AD}(ae_a)$ and $\delta_2(a) = \partial_{AD}(ae_a) + \mathbf{1}-e_a.$ Thus  the  restrictions  of $\delta_1$ and  $\delta_2$ onto  $\mathcal{A}$ are different extensions of $\partial$ onto $\mathcal{A},$ which contradicts the assumption  that $\partial$ admits a unique extension onto $\mathcal{A}$. This completes the proof.
\end{proof}

 \begin{rem}
 It should be pointed out that there are various extensions and generalizations of the classical derivation $\frac{d}{dt}$ as well as various classes of differentiable functions corresponding to such generalizations (see e.g. \cite{Saks}). The special interest attached to the notion of approximate differentiation and its corresponding class $AD[0,1]$ is justified by the fact that the algebra $AD[0,1]$ is the largest subalgebra of $S[0, 1]$ admitting a unique extension of the classical derivation $\frac{d}{dt}$.
 \end{rem}

\section{The algebra of approximately differentiable operators affiliated with hyperfinite type II$_1$ factor and its derivations}\label{der_on_factors}

In this section we introduce an analogue of approximately differentiable functions for hyperfinite type II$_1$ factor $\mathcal{R}$. As in the commutative case, they form a regular $*$-subalgebra $AD(\mathcal{R})$ in the algebra  $S(\mathcal{R})$. We show that there exists a derivation $\delta_{AD}:AD(\mathcal{R})\to S(\mathcal{R})$ which extends the classical approximate derivative $\partial_{AD}$, discussed in Section \ref{sect1}.

The contents of this section complement and extend results from seminal work due to von Neumann \cite{Neu}. In that work, a regular ring  $C_\infty$ of continuous geometry for $\mathbb{C}$ was constructed starting with  a sum of an increasing sequence of matrix rings over the field of complex numbers and then completed in the rank-metric. Our noncommutative analogue of approximately differentiable functions for hyperfinite type II$_1$ factor $\mathcal{R},$
is defined as a completion in the rank-metric of  a sum of an increasing sequence of matrix rings over  $AD[0,1].$

\subsection{Hyperfinite $II_1-$factor as an infinite tensor product}\label{r subsection}

The idea of approximately differentiable operators for the hyperfinite type II$_1$ factor $\mathcal{R}$ is based on the identification of $\mathcal{R}$ with the relative infinite tensor product of matrix algebras. We start with  recalling this identification and refer the reader for further details concerning this construction to
\cite{DFPS, DoSu, ScSu}.

Let $M_2(\mathbb C)$ be the algebra of all $2\times 2$ matrices over the filed $\mathbb{C}$ of all complex numbers. We denote by $\tr_2$ the normalised trace $\tr_2$ on $M_2(\mathbb C)$, that is  $\tr_2(1_2)=1$, where $1_2$ is the $2\times 2$ identity matrix.

The hyperfinite $II_1-$factor $\mathcal R$ can be identified with the relative infinite tensor product
\[\left( \mathcal R,\tau \right)\cong
	\bigotimes\limits_{k=1}^\infty
\left( M_2(\mathbb C),tr_2 \right),\]
such that $\tau$ is a faithful normal tracial state on $\mathcal R$.
The construction of relative infinite tensor products is detailed in
\cite[Definition~\text{XIV}.1.6]{Takesaki:2003}.

For each $n\ge 1$, we may consider finite truncations of this infinite tensor
product.
Set  $\mathcal R_0=\mathbb C$, and let
\[\left(\mathcal R_n,\tau_n\right)=
	\bigotimes\limits_{k=1}^n
\left( M_2\left(\mathbb C \right),tr_2 \right)=M_2(\mathbb{C})^{\otimes n},\]
be the matrix space of $2^n\times 2^n$ matrices, with the normalised trace
$\tau_n$.

There is a natural inclusion $\iota_n:\mathcal R_n\hookrightarrow \mathcal R$,
for every $n\ge 0$, given by
\[\iota_n(x)=x\otimes\left( \bigotimes_{k=n+1}^\infty1_{m(k)} \right).\]
We will identify each $\mathcal R_n$ with its image $\iota_n(\mathcal R_n )\subset\mathcal R$.
Each $\mathcal R_n$ is trivially a von Neumann subalgebra of $\mathcal R$,
and the restriction of $\tau$ to $\mathcal R_n$ gives the trace $\tau_n$.

Thus, the spaces $(\mathcal R_n)_{n=0}^\infty$ form an increasing
filtration of $\mathcal R$, and by definition of the infinite tensor product,
the union $\bigcup_{n=0}^\infty \mathcal R_n$ is weak-$^*$ dense in
$\mathcal R$.

Alternatively, it will also be useful to consider the spaces $\mathcal R_n$
as vector valued matrix spaces.
In particular, for each $n\ge 1$, the space $\mathcal R_n$
is isomorphic to $M_2\left(\mathcal R_{n-1} \right)$, the space
of $\mathcal{R}_{n-1}$-valued $2\times 2$ matrices.
This follows as
$\mathcal R_n=\mathcal R_{n-1}\otimes M_2\left(\mathbb C \right)$,
by definition.

\subsection{$*$-algebra of approximately differentiable operators}\label{nc_approx_differentiability}

Let $D_2(\mathbb{C})$ be the diagonal subalgebra in $M_2(\mathbb{C})$ and consider the maximal abelian subalgebra $\mathcal{D}$ in $\mathcal{R}$, defined by
$$\mathcal{D}=\bigotimes_{k=1}^{\infty}\left(D_2(\mathbb{C}),tr_2 \right).$$
It is known \cite[Theorem 3.2]{Tau} that $\mathcal{D}$ is a Cartan subalgebra in $\mathcal{R}.$

We identify
$$(\mathcal{D},\tau)=\bigotimes_{k=1}^{\infty}L_{\infty}(\{0,1\},\nu),$$
where $\nu(0)=\nu(1)=\frac12.$ The latter algebra is identified with $L_{\infty}(0,1)$ equipped with the usual Lebesgue integration.

We specify the $*$-isomorphism $\pi$ of the algebras $L_{\infty}[0,1]$ and $\mathcal{D}.$

 Consider subsets $\displaystyle X_k=\bigcup\limits_{l=0}^{2^{k-1}-1}\left[\frac{2l}{2^k}, \frac{2l+1}{2^k}\right),\ k\in\mathbb{N}.$
Define the mapping
\begin{eqnarray}\label{isoo}
\pi(\chi_{X_{k}})=\left(\bigotimes_{i=1}^{k-1} 1_i\right)\otimes\begin{pmatrix}1&0\\0&0\end{pmatrix}\otimes \left(\bigotimes_{i=k+1}^\infty 1_i\right),
\end{eqnarray}
where $1_i$ is the $2\times 2$ identity matrix.
The system $\{2\chi_{_{X_k}}-1: k\in \mathbb{N}\}$ is the Rademacher system of functions on $[0,1]$, that is a system of independent random variables taking values $1$ and $-1$ with probability $1/2$. The span of  $\{1, \chi_{_{X_k}}: k\in \mathbb{N}\}$ is dense in $L_\infty[0,1]$ in measure. Therefore, the mapping $\pi$
uniquely extends to a $\ast$-isomorphism $\pi: L_\infty[0,1]\longrightarrow\mathcal{D}.$
Since the $*$-isomorphism $\pi$ preserves the trace, it follows from Proposition \ref{prop_trace_pres_isom} that it  extends up to $*$-isomorphism $\pi: S[0,1]\to S(\mathcal{D}).$

 Therefore, throughout this section we do not distinguish between the $*$-algebra $\mathcal{D}$ (respectively, $S(\mathcal{D})$) and the $*$-algebra $L_\infty[0,1]$ (respectively, $S[0,1]$).
 In particular, we identify $S(0,1)$  with a $*$-subalgebra of $S(\mathcal{R})$.

For $n\geq1,$ the algebra $\mathcal{R}_n=M_2(\mathbb{C})^{\otimes n}$ is spanned by the \lq\lq matrix units\rq\rq $e_{{\bf i},{\bf j}}.$ Here, ${\bf i}=(i_k)_{k=0}^{n-1}\in\{0,1\}^n,$ ${\bf j}=(j_k)_{k=0}^{n-1}\in\{0,1\}^n,$
and
$$e_{{\bf i},{\bf j}}=\bigotimes_{k=0}^{n-1}e_{i_k,j_k}.$$
Therefore, each matrix $x \in \mathcal{R}_n$ has the form
$$
x=\sum_{\bf{i},\bf{j}}a_{\bf{i},\bf{j}}e_{{\bf i},{\bf j}}, \quad a_{\bf{i},\bf{j}}\in \mathbb{C}.
$$

For each pair induces ${\bf i}=(i_k)_{k=0}^{n-1}\in\{0,1\}^n,$ ${\bf j}=(j_k)_{k=0}^{n-1}\in\{0,1\}^n$  consider  a mapping
from $S(\mathcal{D})e_{{\bf i},{\bf i}}$ onto $S(\mathcal{D})e_{{\bf j},{\bf j}}$ defined as follows
$$
\pi(x)e_{{\bf i},{\bf i}} \hookrightarrow e_{{\bf j},{\bf i}} \pi(x) e_{{\bf i},{\bf j}},\,\, \pi(x)\in S(\mathcal{D}).
$$
It induces a mapping from $S[0,1]\chi_{\left[\frac{i}{2^n}, \frac{i+1}{2^n}\right]}$ onto
 $S[0,1]\chi_{\left[\frac{j}{2^n}, \frac{j+1}{2^n}\right]}$ defined as
$$
x(t)\chi_{\left[\frac{i}{2^n}, \frac{i+1}{2^n}\right]}\longrightarrow x\left(t+\frac{j-i}{2^n}\right)\chi_{\left[\frac{j}{2^n}, \frac{j+1}{2^n}\right]},
$$
where $i=\sum\limits_{k=0}^{n-1}i_k2^{n-k-1},$ $j=\sum\limits_{k=0}^{n-1}j_k2^{n-k-1}.$

Let $AD(\mathcal{D})$ be the (commutative) $*$-algebra of all approximately differentiable functions on $(0,1)$ (see Section~\ref{sect1}), which we identify with a subspace of $S(\mathcal{R})$.

Recall that the complete metric $\rho$ on $S(\mathcal{R})$ is defined by setting
$$\rho(x,y)=\tau(l(x-y))=\tau(r(x-y)),\quad x,y\in S({ \mathcal{R}}).$$
As we mentioned above in Section~\ref{sec_der_prel} the rank-metric $\rho$ was firstly introduced by J. von Neumann in \cite{Neu37} (see also \cite[pp. 160-161]{Neu62}).

We now introduce a noncommutative analogue of the algebra $AD[0,1]$, discussed at length in Section~\ref{sect1}.

\begin{defn}\label{def_AD_R}
Let $n\geq 0$ and let $\mathcal{A}_n:=AD(\mathcal{R}_n)$ be the $*$-subalgebra of $S(\mathcal{R})$ generated by $\mathcal{R}_n$ and $AD(\mathcal{D})$. We define the $*$-algebra $AD(\mathcal{R})$ of approximately differentiable operators in $S(\mathcal{R})$ by setting
$$
AD(\mathcal{R})=\overline{\bigcup_{n\geq0}\mathcal{A}_n}^{\rho}.
$$
\end{defn}

It is important to emphasize the connection of Definition \ref{def_AD_R} with seminal von Neumann paper \cite{Neu}.  Indeed, the algebra $AD(\mathcal{R})$ contains a regular ring of continuous geometry for $\mathbb{C},$ introduced in \cite{Neu}. Recall that
$$\mathcal{R}_\infty=\overline{\bigcup_{n\geq0}\mathcal{R}_n}^{\rho}$$ is a continuous geometry for $\mathbb{C}$ (see \cite[Theorems~D~and~E]{Neu}), and contained in
$AD(\mathcal{R}).$  Below, in Proposition \ref{p4} we shall prove that the algebra $AD(\mathcal{R})$ is a proper $*$-subalgebra in $S(\mathcal{R})$ and this may be seen as an extension of von Neumann results \cite[Theorem E]{Neu}.

To establish the regularity of the algebra $AD(\mathcal{R})$ and to introduce a derivation on $AD(\mathcal{R})$, we prove firstly the following auxiliary result for the $*$-algebra $AD(\mathcal{R}_n)$.

\begin{lm}\label{lem_AD(R_n)}
The $*$-algebra $AD(\mathcal{R}_n)$ is regular and every
$x\in AD(\mathcal{R}_n)$ can be written, not necessarily uniquely, as
\begin{equation}\label{an linear span eq}
x=\sum_{U\in\Pi_n}x_UU,\quad x_U\in AD(\mathcal{D}).
\end{equation}
Here $\Pi_n$ is the collection of all permutation matrices from $\mathcal{R}_n.$
\end{lm}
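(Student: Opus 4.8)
The plan is to analyze the structure of $AD(\mathcal{R}_n)$ by leveraging the fact that $\mathcal{R}_n = M_{2^n}(\mathbb{C})$ is spanned by matrix units $e_{\mathbf{i},\mathbf{j}}$, together with the identification of $S(\mathcal{D})$ with $S[0,1]$ and the action of the matrix units by ``shifts'' on the dyadic subintervals $\left[\frac{i}{2^n},\frac{i+1}{2^n}\right)$ described in the preceding paragraphs. First I would observe that every element of $AD(\mathcal{R}_n)$ is a finite sum of words in elements of $\mathcal{R}_n$ and $AD(\mathcal{D})$; since $AD(\mathcal{D})$ commutes with the diagonal matrix units $e_{\mathbf{i},\mathbf{i}}$ and $\mathcal{R}_n$ is linearly spanned by the $e_{\mathbf{i},\mathbf{j}}$, one can push all the $AD(\mathcal{D})$-factors to the left using the intertwining relation $e_{\mathbf{i},\mathbf{j}}\,\pi(x) = \pi(\sigma_{\mathbf{i},\mathbf{j}}(x))\,e_{\mathbf{i},\mathbf{j}}$, where $\sigma_{\mathbf{i},\mathbf{j}}$ is the translation by $\frac{i-j}{2^n}$ on the appropriate dyadic interval (which preserves approximate differentiability, since translations do). This reduces each generating word to the form $\sum_{\mathbf{i},\mathbf{j}} x_{\mathbf{i},\mathbf{j}}\, e_{\mathbf{i},\mathbf{j}}$ with $x_{\mathbf{i},\mathbf{j}}\in AD(\mathcal{D})$, i.e.\ $AD(\mathcal{R}_n) = M_{2^n}(\mathbb{C})\cdot AD(\mathcal{D})$ as an $AD(\mathcal{D})$-module, in fact $AD(\mathcal{R}_n)\cong M_{2^n}(AD(\mathcal{D}))$ after unravelling.

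Next I would convert the expansion in matrix units into one in permutation matrices. Each matrix unit $e_{\mathbf{i},\mathbf{j}}$ is the product of the diagonal projection $e_{\mathbf{i},\mathbf{i}}$ with any permutation matrix $U$ sending the basis vector indexed by $\mathbf{j}$ to the one indexed by $\mathbf{i}$; and such a diagonal projection is itself $\pi(\chi_{[i/2^n,(i+1)/2^n)})\in AD(\mathcal{D})$, since characteristic functions of dyadic intervals are approximately differentiable. Hence $x_{\mathbf{i},\mathbf{j}}\,e_{\mathbf{i},\mathbf{j}} = \bigl(x_{\mathbf{i},\mathbf{j}}\,\pi(\chi_{[i/2^n,(i+1)/2^n)})\bigr)\,U$ with the first factor in $AD(\mathcal{D})$, and summing over $\mathbf{i},\mathbf{j}$ (grouping terms according to which permutation $U$ is chosen) yields the representation \eqref{an linear span eq}. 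Non-uniqueness is clear because $2^n\times 2^n$ permutation matrices are far from linearly independent over $AD(\mathcal{D})$ once $2^n>2$.

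For regularity, I would argue that $AD(\mathcal{R}_n)\cong M_{2^n}(AD(\mathcal{D}))$ and invoke that the matrix algebra over a commutative regular ring is again regular --- indeed, by Proposition~\ref{ad01} the base $AD(\mathcal{D})\cong AD[0,1]$ is a regular $*$-subalgebra of $S[0,1]$, and regularity passes to finite matrix algebras (von Neumann's classical result on regular rings). Concretely, for $A\in M_{2^n}(AD(\mathcal{D}))$ one produces a quasi-inverse $B$ with $ABA=A$ by working over the Calkin ring $S(\mathcal{R})$, in which the partial inverse $i(A)$ exists, and checking that $i(A)$ has entries back in $AD(\mathcal{D})$: this follows because the left/right supports $l(A),r(A)$ lie in $M_{2^n}(AD(\mathcal{D}))$ (a regular subalgebra containing $\mathbf{1}$ contains the supports of its elements, as recalled in the Preliminaries) and because $AD[0,1]$ is closed under the scalar-function operation $t\mapsto 1/t$ used in forming $i(|A|)$, after diagonalizing $|A|$ over the relevant part; alternatively one invokes Proposition~\ref{prop_AD_int_closed}, integral closedness, to see that the entries of $i(A)$, which satisfy polynomial identities over $AD(\mathcal{D})$, stay in $AD(\mathcal{D})$. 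The nondegeneracy condition ``$c^*c=0\Rightarrow c=0$'' is automatic since $AD(\mathcal{R}_n)\subset S(\mathcal{R})$.

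The main obstacle I anticipate is the bookkeeping in the regularity argument: one must be careful that, after forming the polar decomposition $A=V|A|$ and the partial inverse inside $S(\mathcal{R})$, the resulting operator genuinely has $AD(\mathcal{D})$-valued matrix entries rather than merely lying in $S(\mathcal{D})$. The cleanest route is probably to avoid the polar decomposition entirely and instead argue purely ring-theoretically: $M_{2^n}$ over a regular ring is regular (this is a standard fact about von Neumann regular rings, and $AD[0,1]$ is regular by Proposition~\ref{ad01}), and then identify $AD(\mathcal{R}_n)$ with that matrix ring via the shift-conjugation computation above. That sidesteps the analytic subtlety and reduces everything to the already-established commutative case.
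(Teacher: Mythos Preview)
Your proposal is correct and follows essentially the same route as the paper's proof: push all $AD(\mathcal{D})$-factors to one side via the intertwining relation (the paper phrases this as $UaU^{-1}\in AD(\mathcal{D})$ for $U\in\Pi_n$, you phrase it via the shift action of matrix units on dyadic intervals), conclude that $AD(\mathcal{R}_n)$ is a matrix ring over $AD(\mathcal{D})\cong AD[0,1]$, and then invoke the standard fact that matrix rings over regular rings are regular together with Proposition~\ref{ad01}. The only cosmetic difference is that the paper works with permutation matrices throughout, whereas you pass through matrix units first and then convert; your detour through polar decompositions and integral closedness for regularity is unnecessary, and you rightly discard it in favour of the ring-theoretic argument, which is exactly what the paper does.
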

\begin{proof}Note that the algebra $AD(\mathcal{R}_n)$ is generated by $AD(\mathcal{D})$ and $\Pi_n.$ That is, any  $x\in AD(\mathcal{R}_n)$ can be written as a linear span of monomials of the form
$$a_1U_1a_2U_2\cdots a_mU_m,$$ for some $m\in \mathbb{N}$ and $a_k\in AD(\mathcal{D})$ and $U_k\in \Pi_n$ for every $1\leq k\leq m.$ Note that
$$UaU^{-1}\in AD(\mathcal{D})$$
for every $a\in AD(\mathcal{D})$ and for every permutation matrix $U.$ Since $\prod\limits_{l=1}^{k-1}U_l$ is again a permutation matrix, it follows that there exists $b_k\in AD(\mathcal{D})$ such that
\begin{equation}
\label{ad permutation eq}
\prod_{l=1}^{k-1}U_l\cdot a_k=b_k\cdot\prod_{l=1}^{k-1}U_l.
\end{equation}

In particular, $AD(\mathcal{R}_n)$ is a matrix ring over $AD(\mathcal{D})$.  By Proposition \ref{prop_AD_smallest_reg} the algebra $AD(\mathcal{D})=AD[0,1]$ is regular. Since every matrix ring over a regular ring is also regular
(see e.g. \cite[Theorem  3]{Skor}), it follows that $AD(\mathcal{R}_n)$ is regular.

By repeated application of \eqref{ad permutation eq}, we have that
$$a_1U_1a_2U_2\cdots a_mU_m=a_1b_2\cdots b_m\cdot U_1\cdots U_m.$$
Hence, any $x\in AD(\mathcal{R}_n)$ can be represented as in \eqref{an linear span eq}.
\end{proof}

\begin{prop}
\label{p4} The subalgebra $AD(\mathcal{R})$ is a proper regular $*$-subalgebra in $S(\mathcal{R})$ which is dense in $S(\mathcal{R})$ in the measure topology.
\end{prop}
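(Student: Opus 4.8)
The plan is to establish the three assertions separately: density in the measure topology, regularity, and properness. Regularity is already essentially done: each $\mathcal{A}_n = AD(\mathcal{R}_n)$ is regular by Lemma~\ref{lem_AD(R_n)}, so $\bigcup_{n\geq 0}\mathcal{A}_n$ is an increasing union of regular $*$-subalgebras and is therefore itself regular; and the $\rho$-closure of a regular $*$-subalgebra of $S(\mathcal{R})$ is again regular (one shows that if $x_k\to x$ in $\rho$ with each $x_k$ admitting a quasi-inverse $y_k$ inside the subalgebra, then after replacing by $l(x_k)$ and $r(x_k)$ one controls $\rho(y_k,y_l)$ via the rank inequalities $\tau(r(ab))\leq \tau(r(a))$ and $\tau(r(ab))\leq \tau(r(b))$ used in Proposition~\ref{delta continuity lemma}, so $y_k$ is $\rho$-Cauchy and its limit is a quasi-inverse of $x$; the faithfulness condition $c^*c=0\Rightarrow c=0$ is inherited from $S(\mathcal{R})$). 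Since $AD(\mathcal{R})$ is by definition this $\rho$-closure, it is a regular $*$-subalgebra of $S(\mathcal{R})$.

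For density in the measure topology, I would first recall that $\bigcup_{n\geq 0}\mathcal{R}_n$ is weak-$*$ dense in $\mathcal{R}$, hence $\rho$-dense in $\mathcal{R}$, and in fact $\mathcal{R}_\infty = \overline{\bigcup_n \mathcal{R}_n}^{\rho}$ already contains a $\rho$-dense — hence $t_\tau$-dense, since on bounded sets the two topologies compare favourably and $\mathcal{R}$ is $t_\tau$-dense in $S(\mathcal{R})$ — copy of $\mathcal{R}$. Since $\mathcal{R}\subseteq \overline{\bigcup_n \mathcal{R}_n}^{\rho}\subseteq AD(\mathcal{R})$ and $\mathcal{R}$ is dense in $S(\mathcal{R})$ in the measure topology, it suffices to check that the measure topology restricted to $AD(\mathcal{R})$ is coarser than (or compatible with) the ambient one, which is automatic: $AD(\mathcal{R})$ is a $t_\tau$-dense subset of $S(\mathcal{R})$ simply because it contains the $t_\tau$-dense subalgebra $\mathcal{R}$.

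The substantive point is \textbf{properness}: we must exhibit an element of $S(\mathcal{R})$ not in $AD(\mathcal{R})$. The natural candidate is $h\in S(\mathcal{D})\cong S[0,1]$, a measurable function on $[0,1]$ that is nowhere approximately differentiable (such $h$ exists by \cite[Chap.~IX, \S11]{Saks}, and is used implicitly in Proposition~\ref{ad01}). Suppose $h\in AD(\mathcal{R})$. Then there is a sequence $x_k\in \bigcup_n \mathcal{A}_n$ with $\rho(x_k,h)\to 0$; passing to a subsequence we may take $x_k\in \mathcal{A}_{n_k}$. Now I would localize: multiply on both sides by the conditional expectation projections, i.e.\ use the trace-preserving conditional expectation $E_{\mathcal{D}}\colon S(\mathcal{R})\to S(\mathcal{D})$, which is $\rho$-contractive (this needs a short verification using that $E_{\mathcal{D}}$ does not increase ranks, or alternatively one argues directly with matrix units). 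Since $E_{\mathcal{D}}(h)=h$, we get $E_{\mathcal{D}}(x_k)\to h$ in $\rho$ with $E_{\mathcal{D}}(x_k)\in S(\mathcal{D})$; but I must keep track of the structure of $E_{\mathcal{D}}(x_k)$. Here is where Lemma~\ref{lem_AD(R_n)} and Proposition~\ref{t1} enter: writing $x_k=\sum_{U\in\Pi_{n_k}} (x_k)_U U$ with $(x_k)_U\in AD(\mathcal{D})$, the diagonal part $E_{\mathcal{D}}(x_k)$ is the sum of those $(x_k)_U$ with $U=1$ — in particular $E_{\mathcal{D}}(x_k)\in AD(\mathcal{D})=AD[0,1]$. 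Thus $h$ is a $\rho$-limit of elements of $AD[0,1]$, and since $AD[0,1]$ is $\rho$-closed (Proposition~\ref{prop_AD_smallest_reg}), $h\in AD[0,1]$, contradicting the choice of $h$. (An alternative, if one does not want to invoke $E_{\mathcal{D}}$: observe that each $\mathcal{A}_{n_k}$ is a matrix ring over $AD[0,1]$, so $hE$ minus a matrix over $AD[0,1]$ is never invertible there by Proposition~\ref{t1}, and then derive a contradiction with $\rho$-convergence to the invertible-on-its-support element $h$; but the conditional-expectation argument is cleaner.)

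The main obstacle I anticipate is the precise treatment of the conditional expectation (or matrix-ring localization) at the level of unbounded measurable operators: $E_{\mathcal{D}}$ is defined on $\mathcal{R}$ and one must check it extends to a well-defined, $\rho$-continuous map on $S(\mathcal{R})$ sending $\bigcup_n\mathcal{A}_n$ into $AD(\mathcal{D})$, which is why expressing elements of $\mathcal{A}_{n_k}$ in the permutation-matrix form of Lemma~\ref{lem_AD(R_n)} and reading off the diagonal coefficient directly is the safer route. The rest — regularity of increasing unions and of $\rho$-closures, and density — is routine given the tools already assembled in the paper.
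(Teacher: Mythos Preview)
Your arguments for regularity and for density in the measure topology are essentially correct in spirit, though note that the intermediate claim $\mathcal{R}\subseteq\overline{\bigcup_n\mathcal{R}_n}^{\,\rho}$ is false (indeed the $\rho$-closure $\mathcal{R}_\infty$ is a proper subalgebra of $AD(\mathcal{R})$, and the element $\overline{h}$ the paper constructs lies in $\mathcal{R}$ but not in $AD(\mathcal{R})$). Density follows simply because $\bigcup_n\mathcal{R}_n\subset AD(\mathcal{R})$ is already $t_\tau$-dense in $S(\mathcal{R})$.

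The real gap is in your properness argument. The conditional expectation $E_{\mathcal{D}}$ is \emph{not} $\rho$-continuous, and your ``safer route'' of reading off the diagonal coefficient has the same defect. Concretely: let $p_n\in\mathcal{R}_n$ be the rank-one projection onto the constant vector in $\mathbb{C}^{2^n}$. Then $\rho(p_n,0)=\tau(r(p_n))=2^{-n}\to 0$, while $E_{\mathcal{D}}(p_n)=2^{-n}\mathbf{1}$ has full support, so $\rho(E_{\mathcal{D}}(p_n),0)=1$ for every $n$. Thus from $\rho(x_k,h)\to 0$ you cannot conclude anything about $\rho(E_{\mathcal{D}}(x_k),h)$, and the argument collapses. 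An arbitrary nowhere approximately differentiable $h$ is not enough.

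The paper's route is genuinely more delicate. It takes the \emph{specific} Jarn\'ik function $h=\sum_k h_k$, whose defining feature is that for each level $m=16^n-1$ the dyadic translates satisfy $h(\cdot+2l/2^{m+1})-h(\cdot)\in AD$ on the relevant subinterval. This means that under the matrix identification $\Psi_m:\mathcal{B}_m\to M_{2^m}(S(\mathcal{D})e_{\mathbf{0,0}})$ the element $\overline{h}$ becomes $\overline{h}e_{\mathbf{0,0}}\cdot E$ plus a diagonal matrix $R$ with entries in $AD(\mathcal{D})e_{\mathbf{0,0}}$. Now for any $a\in AD(\mathcal{R}_m)$ the matrix $\Psi_m(a)-R$ lies in $M_{2^m}(AD(\mathcal{D})e_{\mathbf{0,0}})$, and Proposition~\ref{t1} (whose statement contains a sign slip---its proof shows $hE-A$ \emph{is} invertible) gives that $\Psi_m(a-\overline{h}-r)$ is invertible, i.e.\ $\rho(a-r,\overline{h})=1$; since $r\in AD(\mathcal{R}_m)$ is independent of $a$, one gets $\rho(a,\overline{h})=1$ for every $a\in\bigcup_m AD(\mathcal{R}_m)$, hence $\overline{h}\notin AD(\mathcal{R})$. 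The Jarn\'ik quasi-periodicity is the missing idea in your proposal; without it Proposition~\ref{t1} cannot be brought to bear, because for a generic $h$ the image $\Psi_m(\overline{h})$ has no reason to be a scalar matrix modulo $M_{2^m}(AD)$.
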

\begin{proof}
By Lemma \ref{lem_AD(R_n)} the algebra $AD(\mathcal{R}_n)$ is a regular $*$-algebra. Since $\{AD(\mathcal{R}_n)\}_{n=0}^\infty$  is an increasing  sequence of subalgebras in $S(\mathcal{R}),$ it follows that $\mathcal{A}:=\bigcup_{n=0}^\infty AD(\mathcal{R}_n)$ is also a regular subalgebra in $S(\mathcal{R})$. Hence, $AD(\mathcal{R})$ is also regular as the closure of a regular algebra with respect to the metric $\rho$ (see e.g. \cite[Theorem 19.6]{Goodearl}).

Let $x\in \mathcal{R}$ and $\|x\|\leq 1.$
Since the $*$-subalgebra $\bigcup_{n=1}^\infty \mathcal{R}_n\subset AD(\mathcal{R})$  is  dense in $\mathcal{R}$ in the strong operator topology,
there exists a net $\{x_\alpha\}$ from the unit ball in $\bigcup_{n=1}^\infty \mathcal{M}_n$
such that
$x_\alpha \stackrel{so}\longrightarrow x.$
Then $\tau((x_\alpha-x)^\ast (x_\alpha-x))\to 0$ (see \cite[Page 130]{SZ}). This means that the  net $\{x_\alpha\}$ converges to $x$
in the norm $\|\cdot\|_2,$ where $\|z\|_2=\sqrt{\tau(z^\ast z)},\, z\in \mathcal{R}.$ Since convergence in the norm $\|\cdot\|_2$
implies convergence in measure {topology} (see \cite[Theorem 5]{Nel}), the net $\{x_\alpha\}$ converges to $x$ in the measure topology. So,
 $\bigcup_{n=1}^\infty \mathcal{R}_n$ is dense in the measure topology in $\mathcal{R},$ and is, therefore, dense in $S(\mathcal{R}).$

 Now we  show that $AD(\mathcal{R})$ is a proper subalgebra of $S(\mathcal{R})$.
For every $n\in \mathbb{N}$ take  a continuous piecewise-linear function $h_n$ on $[0,1]$ defined as follows
$$
\displaystyle
h_n(t)=\left\{\begin{array}{lll}
0, & \hbox{if}\,\,\, t=\frac{2l}{2^{16^n}},\,\,  l=0, 1, 2, \ldots, 2^{16^n-1}; \\
\frac{1}{2^{16^n-4^{n+1}}}, & \hbox{if}\,\,\, t=\frac{2l+1}{2^{16^n}},\,\,  l=0, 1, 2, \ldots, 2^{16^n-1}-1; \\
\hbox{linear}, & \hbox{if}\,\,\, \frac{l}{2^{16^n}} \leq t \leq \frac{l+1}{2^{16^n}},\,\, l=0, 1, 2, \ldots, 2^{16^n}-1. \\
            \end{array}
            \right.
$$
This function coincides with that defined in \cite{Jarnik} for the  sequences $\{k_n=2^{4^{n+1}}\}$ and $\{d_n=2^{-16^n}\}$ (see  \cite[p. 6]{Jarnik}).
Note that $h_n(\cdot)$ is differentiable on  all of points $[0,1]$ excepting the finite number of points  $t=\frac{l}{2^{16^n}},\,\,  l=0, 1, 2, \ldots, 2^{16^n}.$
Setting
\begin{eqnarray*}
h(t)=\sum\limits_{k=0}^\infty h_k(t),\,\,\, t\in
[0,1],
\end{eqnarray*}
where the series is, in fact, uniformly convergent due to \cite[p. 7]{Jarnik}. In particular, the function $h$ is continuous but nowhere approximately differentiable (see
\cite[Theorem 1]{Jarnik}).

Let $n\in \mathbb{N}$ and  $t\in [0, 2^{-16^n+1}].$ For every $l\in \{0,
\ldots, 2^{16^n-1}-1\},$ by the definition of $h_k(t)$ it follows  that
\begin{eqnarray*}
h_k\left(t+\frac{2l}{2^{16^n}}\right)=h_k\left(t\right),\quad t\in [0,1]
\end{eqnarray*}
for $k\geq n,$ and
therefore the difference
$$h\left(\cdot+\frac{2l}{2^{16^n}}\right)-h\left(\cdot\right)\large |_{ [0, 2^{-16^n+1}]}$$ is a finite sum of the
almost everywhere approximately differentiable functions on $[0, 2^{-16^n+1}].$ So,
\begin{eqnarray}\label{differ}
h\left(\cdot+\frac{2l}{2^{16^n}}\right)-h\left(\cdot\right)\large |_{ [0, 2^{-16^n+1}]}\in AD[0, 2^{-16^n+1}]
\end{eqnarray}
for all $n\in \mathbb{N}$ and for all $l\in \{0,\ldots, 2^{16^n-1}-1\}.$

Set $\overline{h}=\pi(h),$ where   $\pi: L_\infty[0,1]\longrightarrow\mathcal{D}$ is a $\ast$-isomorphism defined prior to the statement of the proposition.

Let us show that $\rho(\overline{h}, a)=1$ for all  $a\in AD(\mathcal{R}_m),$ where $m=16^n-1, \ n\ge 1.$ Fix $a\in AD(\mathcal{R}_m).$

Consider  a system of matrix units  $\{e_{{\bf i, j}}\}_{{\bf i}\in {\bf I}}$ in $\mathcal{R}_m,$  here
${\bf I}=\left\{{\bf i}: {\bf i}=(i_k)_{k=0}^{m-1}\in\{0,1\}^m\right\}.$
Let $\mathcal{B}_m$ be a $\ast$-algebra in $S(\mathcal{R})$ generated by $S(\mathcal{D})$ and $\mathcal{R}_m.$
We shall identify $\mathcal{B}_m$ with the matrix $\ast$-algebra $M_{2^m}(S(\mathcal{D})e_{{\bf 0, 0}})$ via $\ast$-isomorphism
$$
\Psi_m: x\rightarrow \left(e_{{\bf 0, i}} x e_{{\bf j, 0}}\right)_{{\bf i,j}\in {\bf I}},
$$
where ${\bf 0}=(0,\ldots, 0).$ Observe that $\Psi_m(a)\in M_{2^m}(AD(\mathcal{D})e_{{\bf 0, 0}}).$

Note that  the $\ast$-isomorphism $\pi$ sends a function
$h\left(\cdot+\frac{2l}{2^{16^n}}\right)\chi_{\left[0, \frac{2l}{2^{16^n}}\right)}(\cdot)\in S[0, 1]$ to the element of the form
$e_{\bf 0,i}\overline{h}e_{\bf i,0}\in S(\mathcal{D})e_{\bf 0,0},$ where $2l=\sum\limits_{k=0}^{m-1}i_k2^{m-k-1}$ for ${\bf i}=(i_k)_{k=0}^{m-1}\in\{0,1\}^m.$  Combining this observation with \eqref{differ}, we arrive at
$$e_{\bf 0,i}\overline{h}e_{\bf i,0}=\overline{h}e_{\bf 0,0}+r_{\bf i},$$ where $r_{\bf i}\in AD(\mathcal{D})e_{\bf 0,0}$ for all ${\bf i}\in {\bf I}.$
Recalling that $h$ is nowhere approximately differentiable and that the element $\Psi_m(a)$  belongs $M_{2^m}(AD(\mathcal{D})e_{{\bf 0, 0}}),$ and appealing to Proposition~\ref{t1}, we infer that the matrix
$\Psi_m(a-\overline{h}-r)$ is invertible in $M_{2^m}(S(\mathcal{D})e_{{\bf 0, 0}}),$
where $r=\sum_{{\bf i}\in {\bf I}} r_{\bf i}e_{\bf i,i}.$ In other words, the support projection of the element $a-\overline{h}-r$ is the identity of the algebra $\mathcal{R}$.
Hence, $\rho\left(a-r, \overline{h}\right)=1.$  Observing that the definition of the element $r\in AD(\mathcal{D}),$ does not depend on the choice of $a$, we may
replace $a$ with $a+r$ and obtain that
$\rho\left(a, \overline{h}\right)=1.$
Thus
$\rho(a, \overline{h})=1$ for all $a\in \bigcup_{m\geq0}AD(\mathcal{R}_m),$ hence   $\overline{h}\in S(\mathcal{R})\setminus AD(\mathcal{R}).$
 The proof is completed. \end{proof}

\subsection{Approximate derivation on the algebra $AD(\mathcal{R})$}

We now construct the derivation $\delta_{AD}:AD(\mathcal{R})\to S(\mathcal{R})$, which extends the approximate derivative $\partial_{AD}$ on the $*$-algebra $AD(\mathcal{D})=AD[0,1]$, introduced in Section \ref{sect1}. We start by constructing a tower of derivations on the $*$-algebras $AD(\mathcal{R}_n)$, $n\ge 1$.

Recall that $\Pi_n$ is the collection of all permutation matrices from $\mathcal{R}_n.$
By Lemma~\ref{lem_AD(R_n)} every element $x\in AD(\mathcal{R}_n)$ can be represented as
$x=\sum_{U\in\Pi_n}x_UU$ for some $x_U\in AD(\mathcal{D})$.

We define
\begin{equation}\label{def_delta_AD_Rn}
\delta_n(x)=\sum_{U\in\Pi_n}\partial_{AD}(x_U)U, \quad n\geq 1.
\end{equation}
For convenience, we also denote $\delta_0=\partial_{AD}$.

\begin{lm}The mapping $\delta_n: AD(\mathcal{R}_n)\to S(\mathcal{R})$, $n\geq 1$, given by \eqref{def_delta_AD_Rn}, is a well-defined linear mapping.
\end{lm}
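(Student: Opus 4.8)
The plan is to verify the two assertions—linearity and well-definedness—separately, with the bulk of the work going into the latter. Linearity is essentially formal: if $x=\sum_{U\in\Pi_n}x_U U$ and $y=\sum_{U\in\Pi_n}y_U U$ are two such representations (with all but finitely many $x_U,y_U$ equal to zero, since $AD(\mathcal{R}_n)$ is a finitely generated module over $AD(\mathcal{D})$ and $\Pi_n$ is finite), then $\alpha x+\beta y=\sum_{U\in\Pi_n}(\alpha x_U+\beta y_U)U$ is a valid representation of $\alpha x+\beta y$, and applying $\delta_n$ to it gives $\sum_U\partial_{AD}(\alpha x_U+\beta y_U)U=\alpha\delta_n(x)+\beta\delta_n(y)$ by linearity of $\partial_{AD}$. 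So once well-definedness is in hand, linearity follows by choosing compatible representations.

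The main obstacle is well-definedness: an element $x\in AD(\mathcal{R}_n)$ may have several representations $x=\sum_{U\in\Pi_n}x_U U$, and we must show $\sum_U\partial_{AD}(x_U)U$ does not depend on the choice. Equivalently, it suffices to show that if $\sum_{U\in\Pi_n}x_U U=0$ with $x_U\in AD(\mathcal{D})$, then $\sum_{U\in\Pi_n}\partial_{AD}(x_U)U=0$. Here the plan is to exploit the matrix-unit structure. Fixing the diagonal system of matrix units $\{e_{\mathbf{i},\mathbf{j}}\}$ of $\mathcal{R}_n$, each permutation matrix $U\in\Pi_n$ is a sum $U=\sum_{\mathbf{i}}e_{\sigma(\mathbf{i}),\mathbf{i}}$ for the corresponding permutation $\sigma$ of $\{0,1\}^n$, and since the $x_U\in AD(\mathcal{D})$ are diagonal (they commute with all $e_{\mathbf{i},\mathbf{i}}$), the coefficient of $e_{\mathbf{k},\mathbf{l}}$ in $\sum_U x_U U$ is $\sum_{U:\,\sigma_U(\mathbf{l})=\mathbf{k}}e_{\mathbf{k},\mathbf{k}}x_U e_{\mathbf{l},\mathbf{l}}$. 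Because the $\sigma_U$ are distinct permutations, for each pair $(\mathbf{k},\mathbf{l})$ there is at most one $U\in\Pi_n$ with $\sigma_U(\mathbf{l})=\mathbf{k}$; thus the matrix-entry map sending $x=\sum_U x_U U$ to the tuple of its $(\mathbf{k},\mathbf{l})$-entries is injective on the level of the coefficients, i.e. the representation $x=\sum_U x_U U$ is in fact \emph{unique} once we insist $x_U\in AD(\mathcal{D})$. This uniqueness reduces well-definedness to a triviality.

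I should be slightly careful on one point: the argument of the previous paragraph uses that the $x_U$ genuinely lie in the diagonal algebra $AD(\mathcal{D})$, so that $e_{\mathbf{k},\mathbf{k}}x_U e_{\mathbf{l},\mathbf{l}}=0$ unless $\mathbf{k}=\mathbf{l}$, and that distinct permutation matrices $U\ne V$ have disjoint ``supports'' in the sense that no pair $(\mathbf{k},\mathbf{l})$ with $e_{\mathbf{k},\mathbf{l}}$ appearing in $U$ also appears in $V$. Both facts are immediate from $\Pi_n\subset\mathcal{R}_n$ being literally the group of $2^n\times 2^n$ $0$–$1$ permutation matrices and $AD(\mathcal{D})$ being the diagonal $*$-subalgebra. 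Granting these, the coefficient extraction $x\mapsto(x_U)_{U\in\Pi_n}$ is well-defined and linear, hence $\delta_n=\sum_U(\partial_{AD}\circ(\,\cdot\,)_U)(\,\cdot\,)U$ is a well-defined linear map from $AD(\mathcal{R}_n)$ into $S(\mathcal{R})$, which is exactly the assertion. (That the values land in $S(\mathcal{R})$ is clear, since $\partial_{AD}$ maps into $S[0,1]=S(\mathcal{D})\subset S(\mathcal{R})$ and each $U\in\mathcal{R}_n\subset S(\mathcal{R})$.)
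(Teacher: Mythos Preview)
Your argument has a genuine error. You claim that ``for each pair $(\mathbf{k},\mathbf{l})$ there is at most one $U\in\Pi_n$ with $\sigma_U(\mathbf{l})=\mathbf{k}$,'' and from this deduce that the representation $x=\sum_{U\in\Pi_n}x_U U$ with $x_U\in AD(\mathcal{D})$ is unique. This is false for $n\geq 2$: the set $\Pi_n$ consists of \emph{all} $(2^n)!$ permutation matrices of size $2^n\times 2^n$, and for any fixed pair $(\mathbf{k},\mathbf{l})$ there are exactly $(2^n-1)!$ permutations $\sigma$ with $\sigma(\mathbf{l})=\mathbf{k}$. In particular, already over $\mathbb{C}$ the permutation matrices in $M_{2^n}(\mathbb{C})$ are linearly dependent once $2^n\geq 3$, so there are nontrivial relations $\sum_U c_U U=0$ with scalar $c_U$. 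The paper in fact explicitly flags this: Lemma~\ref{lem_AD(R_n)} says the representation \eqref{an linear span eq} holds ``not necessarily uniquely.'' So your uniqueness-based route to well-definedness collapses.

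What is actually needed---and what the paper does---is to show directly that $\sum_k x_k A_k=0$ with $x_k\in AD(\mathcal{D})$ and $A_k\in\mathcal{R}_n$ forces $\sum_k\partial_{AD}(x_k)A_k=0$. The key step you are missing is that the diagonal matrix units $e_{\mathbf{i},\mathbf{i}}$ lie in $AD(\mathcal{D})$ and $\partial_{AD}$ vanishes on them (since $\partial_{AD}$ is non-expansive and kills projections). One multiplies the relation by $e_{\mathbf{i_0},\mathbf{i_0}}$ on the left and $e_{\mathbf{j_0},\mathbf{i_0}}$ on the right to isolate a single ``matrix entry'' $\sum_k a^k_{\mathbf{i_0},\mathbf{j_0}}x_k\,e_{\mathbf{i_0},\mathbf{i_0}}=0$ in $AD(\mathcal{D})$, applies $\partial_{AD}$ there (using the Leibniz rule together with $\partial_{AD}(e_{\mathbf{i_0},\mathbf{i_0}})=0$), and then reassembles. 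This handles the non-uniqueness head-on rather than trying to sidestep it.
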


\begin{proof}
Let $n\geq 1$ be fixed.
It is sufficient to show that if
$\sum_{k=1}^mx_kA_k=0$ for some $x_k\in AD(\mathcal{D})$ and $A_k\in \mathcal{R}_n$, $k=1,\dots, m, m\in\mathbb{N}$, then $\sum_{k=1}^m\partial_{AD}(x_k)A_k=0.$

 Recall that for $n\geq1,$ the algebra $\mathcal{R}_n=M_2(\mathbb{C})^{\otimes n}$ is spanned by the \lq\lq matrix units\rq\rq $e_{{\bf i},{\bf j}},$ where, ${\bf i}=(i_k)_{k=0}^{n-1}\in\{0,1\}^n,$ ${\bf j}=(j_k)_{k=0}^{n-1}\in\{0,1\}^n,$
and $e_{{\bf i},{\bf j}}=\bigotimes_{k=0}^{n-1}e_{i_k,j_k}.$
Therefore, each matrix $A_k\in \mathcal{R}_n$ has the form.
$$
A_k=\sum_{\bf{i},\bf{j}}a^k_{\bf{i},\bf{j}}e_{{\bf i},{\bf j}}, \quad a^k_{\bf{i},\bf{j}}\in \mathbb{C}.
$$
For each pair of fixed indices ${\bf i_0},{\bf j_0}$ multiplying the equality
$\sum\limits_{k=1}^mx_kA_k=0$ by  $e_{{\bf i_0},{\bf i_0}}$ on the left side and by $e_{{\bf j_0},{\bf i_0}}$ on the right, we obtain that $$\sum_{k=1}^m a^k_{{\bf i_0},{\bf j_0}}x_k e_{{\bf i_0},{\bf i_0}}=0.$$

Note that $e_{{\bf i_0},{\bf i_0}}\in AD(\mathcal{D}),$ and  $e_{{\bf i_0},{\bf i_0}}$ is a projection. Therefore, since the derivation $\partial_{AD}$ vanishes on projections, it follows that from the Leibniz rule that
%
$$
\sum_{k=1}^m a^k_{{\bf i_0},{\bf j_0}}\partial_{AD}(x_k) e_{{\bf i_0},{\bf i_0}}=\partial_{AD}\left(\sum_{k=1}^m a^k_{{\bf i_0},{\bf j_0}}x_k e_{{\bf i_0},{\bf i_0}}\right)=0.
$$
Multiplying {the} last equality by $e_{{\bf i_0},{\bf j_0}}$ from  the right, we obtain
$$
\sum_{k=1}^m a^k_{{\bf i_0},{\bf j_0}}\partial_{AD}(x_k) e_{{\bf i_0},{\bf j_0}}=0.
$$
Hence,
$$
\sum_{k=1}^m\partial_{AD}(x_k)A_k=\sum_{k=1}^m\sum_{{\bf i},{\bf j}}a^k_{{\bf i},{\bf j}}\partial_{AD}(x_k) e_{{\bf i},{\bf j}}=\sum_{{\bf i},{\bf j}}\sum_{k=1}^m a^k_{{\bf i},{\bf j}}\partial_{AD}(x_k) e_{{\bf i},{\bf j}}=0,
$$
which completes the proof.
\end{proof}

Recall that we identify the corresponding elements from $S(\mathcal{D})$ and from $S(0,1)$.

\begin{prop}\label{permutation}
Let $n\ge 1$ and let $U\in \Pi_n$ be a permutation matrix.
Then  there exists a  permutation $\gamma:[0,1]\to [0,1]$ of dyadic intervals, such that
$$
UcU^{-1}=c\circ \gamma, \quad \forall c\in S[0,1].
$$
Moreover, if, in addition, $c\in AD[0,1]$, then $
UcU^{-1}\in AD[0,1]
$
and
$\partial_{AD}(c\circ \gamma)=\partial_{AD}(c)\circ\gamma$.
\end{prop}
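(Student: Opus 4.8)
The plan is to reduce the whole statement to one concrete observation: conjugation by $U$ acts on $S(\mathcal{D})=S[0,1]$ as precomposition by a \emph{piecewise translation} $\gamma$ (slope $1$) that permutes the $2^n$ dyadic intervals $I_i=[\,i2^{-n},(i+1)2^{-n})$, and a piecewise translation is completely harmless for approximate differentiability. So the work splits into (i) producing $\gamma$ and proving $UcU^{-1}=c\circ\gamma$, and (ii) the ``moreover'' assertion.

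For (i): since $U\in\Pi_n\subset\mathcal{R}_n$, I would write $U=u\otimes\bigl(\bigotimes_{k>n}1_k\bigr)$ with $u$ a $2^n\times 2^n$ permutation matrix. A permutation matrix normalises the diagonal, so $u\,D_2(\mathbb{C})^{\otimes n}u^{-1}=D_2(\mathbb{C})^{\otimes n}$ and hence $\mathrm{Ad}_U$ maps $\mathcal{D}$ onto $\mathcal{D}$; since conjugation by the bounded unitary $U$ is a homeomorphism of $S(\mathcal{R})$ for the measure topology carrying operators affiliated with $\mathcal{D}$ to operators affiliated with $U\mathcal{D}U^{-1}=\mathcal{D}$, it restricts to a $\tau$-preserving, measure-continuous $*$-automorphism of $S(\mathcal{D})=S[0,1]$. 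Under the identification of Section~\ref{nc_approx_differentiability} and the isomorphism $\pi$ of \eqref{isoo}, the factor $D_2(\mathbb{C})^{\otimes n}$ corresponds to functions constant on the $I_i$ and the tail $\bigotimes_{k>n}D_2(\mathbb{C})$ to the internal structure of each $I_i$, so $\mathrm{Ad}_U=\alpha_u\otimes\mathrm{id}$, where $\alpha_u$ only permutes the minimal projections $\chi_{I_i}$ by some $\sigma\in S_{2^n}$. The transformation $\gamma$ translating $I_i$ onto $I_{\sigma(i)}$, i.e.\ $\gamma(t)=t+(\sigma(i)-i)2^{-n}$ for $t\in I_i$, then implements $\mathrm{Ad}_U$ on every dyadic step function, and these are dense in $S[0,1]$ for the measure topology; by continuity $UcU^{-1}=c\circ\gamma$ for all $c\in S[0,1]$. (I would double-check the direction $\sigma$ versus $\sigma^{-1}$ on a small example, and keep the multi-index/integer convention of \eqref{isoo} straight.)

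For (ii): fix $c\in AD[0,1]$. On the interior of each $I_i$ the map $\gamma$ is a pure translation $t\mapsto t+a_i$, and translating the variable preserves Lebesgue density points and approximate limits, so $c\circ\gamma$ is approximately differentiable at a.e.\ $t_0\in I_i$ with $(c\circ\gamma)'_{ap}(t_0)=c'_{ap}(\gamma(t_0))$; since the endpoints of the $I_i$ form a null set, $UcU^{-1}=c\circ\gamma\in AD[0,1]$. Finally $f\mapsto f'_{ap}$ is a non-expansive derivation on $AD[0,1]$ extending the classical $\partial$ on $D[0,1]$ (non-expansiveness by the argument of Proposition~\ref{nine}, Leibniz being the classical product rule for approximate derivatives), so by the uniqueness in Proposition~\ref{prop_partial_AD_nonexpansive} it equals $\partial_{AD}$; hence $\partial_{AD}(c\circ\gamma)=(c\circ\gamma)'_{ap}=c'_{ap}\circ\gamma=\partial_{AD}(c)\circ\gamma$. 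If one prefers not to identify $\partial_{AD}$ with $(\cdot)'_{ap}$, an alternative is: write $c=\sum_k\chi_{A_k}g_k$ as in Proposition~\ref{prop_AD_repr}, refine the representation by the partition $\{I_i\}$ and replace each $g_k(\cdot+a_i)|_{I_i}$ by a $C^1[0,1]$-extension, obtaining $c\circ\gamma$ in the form \eqref{eq_AD_repr}; then apply $\partial_{AD}$, which vanishes on characteristic functions, to get the formula directly.

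I expect the main obstacle to be the bookkeeping in step (i): verifying, under the specific Rademacher isomorphism $\pi$, that $\mathrm{Ad}_U|_{S(\mathcal{D})}$ is implemented by an honest \emph{piecewise translation} permuting dyadic intervals rather than by some less structured measure-preserving transformation, and getting all index conventions consistent. Once that is in hand, step (ii) is essentially automatic, since translations are as benign as possible for the approximate derivative.
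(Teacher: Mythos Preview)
Your argument is correct and follows essentially the same line as the paper's proof: identify conjugation by $U$ with precomposition by a piecewise translation $\gamma$ permuting the dyadic intervals of length $2^{-n}$, verify this on a dense generating set, and then observe that such a $\gamma$ is harmless for approximate differentiability. The paper carries out part (i) by checking the identity on the explicit generators $\chi_{X_{\bf i}}$ and $\chi_{X_k}$ ($k>n$) coming from \eqref{isoo}, which is exactly your ``$\mathrm{Ad}_U=\alpha_u\otimes\mathrm{id}$'' observation written out in coordinates. For part (ii) the paper takes the detour of reducing, via Proposition~\ref{prop_AD_repr}, to the case $c\in D[0,1]$ and a single translation $\gamma(t)=\{t+r\}$; your direct argument that translations preserve density points and approximate limits, together with the identification of $\partial_{AD}$ with $f\mapsto f'_{ap}$ through the uniqueness in Proposition~\ref{prop_partial_AD_nonexpansive}, is a bit cleaner and avoids that reduction (and your offered alternative is precisely the paper's route). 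One small caution, which you already flagged: with $Ue_{{\bf i},{\bf i}}U^{-1}=e_{\sigma({\bf i}),\sigma({\bf i})}$ one gets $U\chi_{I_i}U^{-1}=\chi_{I_{\sigma(i)}}=\chi_{I_i}\circ\gamma$ only if $\gamma(I_{\sigma(i)})=I_i$, so the correct formula is $\gamma|_{I_{\sigma(i)}}(t)=t+(i-\sigma(i))2^{-n}$ rather than the version you wrote; this is the $\sigma$ versus $\sigma^{-1}$ check you anticipated.
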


\begin{proof} Let ${\bf I}=\left\{{\bf i}: {\bf i}=(i_k)_{k=0}^{n-1}\in\{0,1\}^n\right\}.$
For ${\bf i} \in {\bf I}$ set
\begin{eqnarray*}
X_{\bf i}=\bigcap_{s=0}^{n-1} Y_{s+1},
\end{eqnarray*}
where
$Y_{s+1} =X_{s+1}$ for $i_s = 0$ and $Y_{s+1}=[0, 1) \setminus X_{s+1}$ for $i_s=1.$ Here $\left\{X_k: k\in \mathbb{N}\right\}$ is the system of subsets in $[0,1]$ defined before  \eqref{isoo}.
Then $\left\{X_{{\bf i}}: {\bf i} \in {\bf I}\right\}$ is a partition of $[0,1]$ into dyadic intervals of the lengths $1/2^n.$
Using  \eqref{isoo} we obtain that
\begin{eqnarray}\label{permutat}
\pi(\chi_{X_{{\bf i}}})&=& \prod_{s=0}^{n-1} \pi(\chi_{Y_{s+1}})=\bigotimes_{s=0}^{n-1} e_{i_s, i_s}=e_{{\bf i,i}}
\end{eqnarray}
for all ${\bf i} \in {\bf I}.$

Since any permutation matrix $U\in \Pi_n$ induces a permutation of the system
$\{e_{{\bf i, i}}: {\bf i}\in {\bf I}\},$ we have that $Ue_{{\bf i, i}}U^{-1}=e_{{\bf \sigma(i), \sigma(i)}},$ where $\sigma$ is a permutation of ${\bf I}.$
Then \eqref{permutat}  gives us
$U\pi(\chi_{{\bf i}})U^{-1}=\pi(\chi_{{\bf \sigma(i)}}).$ Thus $\sigma$ induces a mapping  $\gamma:[0,1]\to [0,1]$ which acts as a permutation of dyadic intervals and  $U\pi(\chi_{X_{{\bf i}}})U^{-1}=\pi(\chi_{X_{\bf i}})\circ \gamma$ for all ${\bf i}\in {\bf I}.$
Further for $k>n,$ the subset $X_k$ the collection of disjoint dyadic intervals with the lengths $1/2^k,$ and therefore $\gamma$ permutes the elements of $X_k$. Hence,
 $U\pi(\chi_{X_k})U^{-1}=\pi(\chi_{X_k})=\pi(\chi_{X_k})\circ \gamma$ for all $k>n.$ Since a linear span of the system $\left\{1, \chi_{X_{{\bf i}}}: {\bf i} \in {\bf I}\right\}\cup
 \left\{\chi_{X_k}: k>n\right\}$ is dense in the measure topology in $S[0,1],$ it follows that
 $UcU^{-1}=c\circ \gamma$ for all $c\in S[0,1].$

Let $c\in AD[0,1].$ In order to prove  that $UcU^{-1}\in AD[0,1]$
it suffices to show
$
c \circ \gamma \in AD[0,1],$ where  $\gamma(t)=(t+r)({\rm mod}\ 1),$
$r\in (0,1)$ is a dyadic rational.
By Proposition \ref{prop_AD_repr}
it suffices to consider the case $c\in D[0,1].$
Let $c$  has a finite derivative at each point of a subset  $A\subset [0,1]$ with complete measure.
Then a subset  $\{A+r\}$ is also has complete measure, and therefore the intersection
$A\cap \{A+r\}$ also has a complete measure. For every point
$t$ of this intersection there exist finite derivatives
$c'(t)$ and  $c'(\{t+r\})$. This means that $c\circ \gamma \in D[0,1]$.

Finally,
the equality $\partial_{AD}(c(\{t+r\}))=(\partial_{AD}(c))(\{t+r\})$ implies that  $\partial_{AD}(c\circ \gamma)=\partial_{AD}(c)\circ\gamma.$
The proof is complete.
\end{proof}

Next, we show that the sequence $\{\delta_n\}_{n\ge 1}$, defined by \eqref{def_delta_AD_Rn} on the increasing sequence of algebras $AD(\mathcal{R}_n)$, is a sequence of derivations such that each of the subsequent derivation is an extension  of the previous one and all of them vanish on $\cup_{n\ge 1}\mathcal{R}_n$.

\begin{prop}\label{tower lemma}
 Let $\delta_n:AD(\mathcal{R}_n)\to S(\mathcal{R})$ be the mapping, defined by \eqref{def_delta_AD_Rn}.
For every $n\geq 0$, $\delta_n$ is a derivation and
$$\delta_{n+1}|_{AD(\mathcal{R}_n)}=\delta_n,\quad \delta_n|_{\mathcal R_n}=0.$$
In particular, $\delta_n|_{AD(\mathcal{D})}=\partial_{AD}$, $n\geq 1$.
\end{prop}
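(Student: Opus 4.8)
The plan is to check the three assertions in turn, relying throughout on the lemma just proved, which guarantees that each $\delta_n$ is a well-defined linear map; consequently its value on any $x\in AD(\mathcal R_n)$ may be computed from \emph{any} representation of $x$ of the form \eqref{an linear span eq}.

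\textbf{Derivation property.} Both sides of $\delta_n(xy)=\delta_n(x)y+x\,\delta_n(y)$ are $\mathbb C$-bilinear in $(x,y)$, and by Lemma \ref{lem_AD(R_n)} the algebra $AD(\mathcal R_n)$ is linearly spanned by monomials $aU$ with $a\in AD(\mathcal D)$, $U\in\Pi_n$; hence it suffices to take $x=aU$ and $y=bV$. The key tool is Proposition \ref{permutation}: conjugation by $U$ maps $AD(\mathcal D)$ into itself, so $b':=UbU^{-1}\in AD(\mathcal D)$, and this conjugation is equivariant for $\partial_{AD}$ in the sense that, with $\gamma_U$ the associated permutation of dyadic intervals, $U\,\partial_{AD}(b)\,U^{-1}=\partial_{AD}(b)\circ\gamma_U=\partial_{AD}(b\circ\gamma_U)=\partial_{AD}(b')$. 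Since $(aU)(bV)=(ab')(UV)$ is again of the form (coefficient in $AD(\mathcal D)$)$\,\cdot\,$(permutation matrix), the definition \eqref{def_delta_AD_Rn} together with the Leibniz rule for $\partial_{AD}$ (a derivation by Proposition \ref{prop_partial_AD_nonexpansive}) gives
$$\delta_n\big((aU)(bV)\big)=\partial_{AD}(ab')\,UV=\big(\partial_{AD}(a)\,b'+a\,\partial_{AD}(b')\big)UV .$$
On the other hand $\delta_n(aU)\,bV+aU\,\delta_n(bV)=\partial_{AD}(a)\,U\,bV+a\,U\,\partial_{AD}(b)\,V$, and rewriting $UbV=(UbU^{-1})UV=b'UV$ and $U\,\partial_{AD}(b)\,V=(U\,\partial_{AD}(b)\,U^{-1})UV=\partial_{AD}(b')\,UV$ shows this equals the same expression. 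Thus $\delta_n$ is a derivation.

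\textbf{Compatibility and vanishing.} Under the embedding $\mathcal R_n\subset\mathcal R_{n+1}$ (tensoring with $1_2$), a permutation matrix of $\mathcal R_n$ is a permutation matrix of $\mathcal R_{n+1}$, so $\Pi_n\subseteq\Pi_{n+1}$; hence a representation $x=\sum_{U\in\Pi_n}x_UU$ of $x\in AD(\mathcal R_n)$ is also a representation \eqref{an linear span eq} relative to $\Pi_{n+1}$ (with the extra coefficients set to $0$), and applying well-definedness of $\delta_{n+1}$ to it yields $\delta_{n+1}(x)=\sum_{U\in\Pi_n}\partial_{AD}(x_U)U=\delta_n(x)$. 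For $\delta_n|_{\mathcal R_n}=0$, expand $A\in\mathcal R_n$ in matrix units; choosing for each pair $\mathbf i,\mathbf j$ a permutation matrix $U_{\mathbf i,\mathbf j}\in\Pi_n$ with $e_{\mathbf i,\mathbf j}=e_{\mathbf i,\mathbf i}U_{\mathbf i,\mathbf j}$ produces a representation $A=\sum_{U\in\Pi_n}d_UU$ in which each $d_U$ is a scalar combination of the projections $e_{\mathbf i,\mathbf i}=\pi(\chi_{X_{\mathbf i}})$ (cf. \eqref{permutat}). Since $\partial_{AD}$ vanishes on projections, $\partial_{AD}(d_U)=0$ for every $U$, so $\delta_n(A)=\sum_{U}\partial_{AD}(d_U)U=0$; in particular $\delta_n$ kills every permutation matrix, which confirms the internal consistency of \eqref{def_delta_AD_Rn}. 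Finally, the one-term representation $c=c\cdot\mathbf 1$ gives $\delta_n(c)=\partial_{AD}(c)$ for $c\in AD(\mathcal D)$, and the case $n=0$ is trivial since $\delta_0=\partial_{AD}$ and $AD(\mathcal R_0)=AD(\mathcal D)$.

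\textbf{Main obstacle.} The only substantive step is the derivation property: one must keep careful track of the non-commutativity, since permutation matrices do not commute with $AD(\mathcal D)$, and the whole computation hinges on the equivariance $U\,\partial_{AD}(b)\,U^{-1}=\partial_{AD}(UbU^{-1})$ supplied by Proposition \ref{permutation}, together with the need to invoke well-definedness of $\delta_n$ each time a product is put back into the canonical form \eqref{an linear span eq}.
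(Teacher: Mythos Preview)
Your argument is correct and, for the derivation property, identical to the paper's: both reduce to monomials $aU$, $bV$ and use Proposition~\ref{permutation} for the conjugation equivariance $U\,\partial_{AD}(b)\,U^{-1}=\partial_{AD}(UbU^{-1})$.

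Where you differ is in the remaining two items. For $\delta_n|_{\mathcal R_n}=0$, the paper asserts that in a representation $x=\sum_U x_UU$ of $x\in\mathcal R_n$ each $x_U$ is a \emph{constant}; your version---writing $e_{\mathbf i,\mathbf j}=e_{\mathbf i,\mathbf i}U_{\mathbf i,\mathbf j}$ so that the coefficients are scalar combinations of the diagonal projections $e_{\mathbf i,\mathbf i}\in\mathcal D$, on which $\partial_{AD}$ vanishes---is the more careful formulation (the $\mathbb C$-span of $\Pi_n$ is not all of $\mathcal R_n$, e.g.\ already for $n=1$). For the compatibility $\delta_{n+1}|_{AD(\mathcal R_n)}=\delta_n$, you argue directly via the inclusion $\Pi_n\subset\Pi_{n+1}$ and well-definedness; the paper instead sets $\delta_n':=\delta_n-\delta_{n+1}|_{AD(\mathcal R_n)}$, observes this is a derivation vanishing on both $AD(\mathcal D)$ and $\mathcal R_n$, and concludes by the Leibniz rule that it vanishes on the algebra they generate. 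Your route is more elementary; the paper's trick of checking a derivation is zero on generators is slicker and avoids any explicit bookkeeping with $\Pi_n\subset\Pi_{n+1}$.
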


\begin{proof}
We show firstly that $\delta_n$ is a derivation for every $n\ge 1$.
By the definition of $\delta_n$, it  suffices to verify the Leibniz rule for $x=aU$ and $y=bV$ with $a,b\in AD(\mathcal{D})$ and $U,V\in \Pi_n$.

Let $U\in \Pi_n$ be fixed permutation matrix.
By Proposition \ref{permutation}  there exists a suitable permutation $\gamma:[0,1]\to [0,1]$ of dyadic intervals, such that
$$UcU^{-1}=c\circ \gamma, \quad \forall c\in S[0,1].$$
In particular,
$$UcU^{-1}\in AD[0,1], \quad \forall c\in AD[0,1].$$
Given that the permutation $\gamma$ commutes with the approximate derivative $\partial_{AD}$ we obtain that
\begin{equation}\label{u eq}
\partial_{AD}\left(UcU^{-1}\right)=\partial_{AD}(c\circ \gamma)=\partial_{AD}(c)\circ\gamma=U\partial_{AD}(c)U^{-1},\quad \forall c\in AD[0,1].
\end{equation}
Therefore, we have
$$
\delta_n(xy)=\delta_n\left(a\cdot UbU^{-1}\cdot UV\right)=\partial_{AD}\left(a\cdot UbU^{-1}\right)\cdot UV.
$$
Using the Leibniz rule for the elements $a, UbU^{-1}\in AD[0,1]$ and \eqref{u eq} we infer that
$$
\partial_{AD}\left(a\cdot UbU^{-1}\right)=\partial_{AD}(a)\cdot UbU^{-1}+a\cdot\partial_{AD}\left(UbU^{-1}\right)=\partial_{AD}(a)\cdot UbU^{-1}+aU\cdot\partial_{AD}(b)U^{-1}.$$
Hence,
\begin{eqnarray*}
\delta_n(xy) & = & \Big(\partial_{AD}(a)\cdot UbU^{-1}+aU\cdot\partial_{AD}(b)U^{-1}\Big)UV=
\\&
=& \partial_{AD}(a)U\cdot bV+aU\cdot\partial_{AD}(b)V=\delta_n(x)y+x\delta_n(y).
\end{eqnarray*}
Thus, the Leibniz rule is satisfied for $x,y$, and therefore, $\delta_n$ is a derivation on $AD(\mathcal{R}_n)$.

Since for any $x\in \mathcal{R}_n$ in the representation $x=\sum_{U\in \Pi_n}x_UU$ every $x_U$ is a constant, it follows immediately from the definition of $\delta_n$, that $\delta_n|_{\mathcal{R}_n}=0$. The equality  $\delta_n|_{AD(\mathcal{D})}=\partial_{AD}$ also follows directly, because for $x\in AD(\mathcal{D})$ the representation of $x$ in the form $x=\sum_{U\in\Pi_n}x_UU$ involves only the identical permutation matrix and so the required equality follows immediately from \eqref{def_delta_AD_Rn}.
 It remains to show that  $\delta_{n+1}|_{AD(\mathcal{R}_n)}=\delta_n,\quad n\geq 0.$

Define a derivation $\delta_n':AD(\mathcal{R}_n)\to S(\mathcal{R})$ by setting
$$\delta_n'=\delta_n-\delta_{n+1}|_{AD(\mathcal{R}_n)}.$$
We have
$$\delta_n'|_{AD(\mathcal{D})}=0,\quad \delta_n'|_{\mathcal R_n}=0.$$
Since $AD(\mathcal{R}_n)$ is generated by $AD(\mathcal{D})$ and $\mathcal R_n$ it follows from the Leibniz rule that the derivation $\delta_n'$ vanishes on $AD(\mathcal{R}_n)$. This proves the claim.
\end{proof}

We are now in a position to construct a noncommutative analogue of the approximate derivative $\partial_{AD}$.

\begin{thm}\label{b main thm} There exists a derivation $\delta_{AD}:AD(\mathcal{R})\to S(\mathcal{R})$ such that $\delta|_{AD(\mathcal{D})}=\partial_{AD}$
\end{thm}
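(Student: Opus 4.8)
The plan is to build $\delta_{AD}$ as the $\rho$-limit of the tower $\{\delta_n\}_{n\geq 0}$ constructed in Proposition~\ref{tower lemma}. Concretely, set $\mathcal{A}:=\bigcup_{n\geq 0}AD(\mathcal{R}_n)$, which is a regular $*$-subalgebra of $S(\mathcal{R})$ whose $\rho$-closure is $AD(\mathcal{R})$ by Definition~\ref{def_AD_R}. On $\mathcal{A}$ the derivations $\delta_n$ are coherent, i.e. $\delta_{n+1}|_{AD(\mathcal{R}_n)}=\delta_n$, so they glue to a single well-defined linear map $\widetilde{\delta}:\mathcal{A}\to S(\mathcal{R})$ satisfying the Leibniz rule (the Leibniz identity for $x,y\in\mathcal{A}$ is checked inside any single $AD(\mathcal{R}_N)$ containing both, where it holds by Proposition~\ref{tower lemma}), and with $\widetilde{\delta}|_{AD(\mathcal{D})}=\partial_{AD}$ and $\widetilde{\delta}|_{\bigcup_n\mathcal{R}_n}=0$.

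Next I would extend $\widetilde{\delta}$ from $\mathcal{A}$ to its $\rho$-closure $AD(\mathcal{R})$ by continuity. The key input here is Proposition~\ref{delta continuity lemma}: every derivation from a $*$-regular subalgebra of $S(\mathcal{M})$ into $S(\mathcal{M})$ is $\rho$-Lipschitz with constant $3$. Applying this to the regular algebra $\mathcal{A}$ gives $\rho(\widetilde{\delta}(a),\widetilde{\delta}(b))\leq 3\rho(a,b)$ for all $a,b\in\mathcal{A}$. Since $(S(\mathcal{R}),\rho)$ is a complete metric space (\cite[Proposition 2.1]{Ciach}) and $AD(\mathcal{R})=\overline{\mathcal{A}}^{\rho}$, the uniformly continuous map $\widetilde{\delta}$ extends uniquely to a $\rho$-continuous map $\delta_{AD}:AD(\mathcal{R})\to S(\mathcal{R})$.

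Finally I would verify that the extension $\delta_{AD}$ is still a derivation. This is a standard continuity-of-multiplication argument: multiplication in $S(\mathcal{R})$ is $\rho$-continuous (since $(S(\mathcal{R}),\rho)$ is a topological ring by \cite[Proposition 2.1]{Ciach}), so given $x,y\in AD(\mathcal{R})$ one picks sequences $a_k,b_k\in\mathcal{A}$ with $a_k\to x$, $b_k\to y$ in $\rho$; then $a_kb_k\to xy$, and passing to the limit in $\delta_{AD}(a_kb_k)=\delta_{AD}(a_k)b_k+a_k\delta_{AD}(b_k)$—using $\rho$-continuity of $\delta_{AD}$ and of multiplication—yields $\delta_{AD}(xy)=\delta_{AD}(x)y+x\delta_{AD}(y)$. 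Since $AD(\mathcal{D})\subset\mathcal{A}$, the identity $\delta_{AD}|_{AD(\mathcal{D})}=\partial_{AD}$ is immediate from $\widetilde{\delta}|_{AD(\mathcal{D})}=\partial_{AD}$.

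The only genuinely delicate point is the Lipschitz bound underwriting the extension step: one must ensure Proposition~\ref{delta continuity lemma} genuinely applies, which requires that $\mathcal{A}=\bigcup_n AD(\mathcal{R}_n)$ be a $*$-regular subalgebra of $S(\mathcal{R})$ containing $\mathbf{1}$—this was already established in the proof of Proposition~\ref{p4} (each $AD(\mathcal{R}_n)$ is regular by Lemma~\ref{lem_AD(R_n)}, and an increasing union of regular subalgebras is regular). Everything else is a routine density/continuity argument, so I expect no real obstacle beyond bookkeeping.
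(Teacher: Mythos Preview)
Your proposal is correct and follows essentially the same route as the paper: glue the coherent tower $\{\delta_n\}$ on $\mathcal{A}=\bigcup_n AD(\mathcal{R}_n)$, invoke Proposition~\ref{delta continuity lemma} for $\rho$-continuity, and extend to the $\rho$-closure $AD(\mathcal{R})$. Your write-up is in fact more careful than the paper's---you spell out the completeness of $(S(\mathcal{R}),\rho)$, the preservation of the Leibniz rule under $\rho$-limits via the topological-ring structure, and the regularity hypothesis needed for Proposition~\ref{delta continuity lemma}---whereas the paper leaves these as implicit.
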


\begin{proof}
Let $\delta_n:AD(\mathcal{R}_n)\to S(\mathcal{R})$, $n\geq 0$, be the derivation, given by \eqref{def_delta_AD_Rn}.

Consider the $\ast$-subalgebra $\mathcal{A}=\bigcup_{n\geq0}AD(\mathcal{R}_n).$ Define the mapping $\delta:\mathcal{A}\to S(\mathcal{R})$ by setting $\delta|_{AD(\mathcal{R}_n)}=\delta_n.$ By Proposition \ref{tower lemma} we have  $\delta_{n+1}|_{AD(\mathcal{R}_n)}=\delta_n$, and therefore, $\delta$ is a well-defined mapping. It is clear that $\delta$ is a derivation.

By Lemma \ref{delta continuity lemma} the derivation $\delta $ is continuous with respect to the metric $\rho$. By definition, $AD(\mathcal{R})=\overline{\bigcup_{n\geq0}AD(\mathcal{R}_n)}^{\rho}$, and so $\delta$ extends up to a derivation $\delta_{AD}:AD(\mathcal{R})\to S(\mathcal{R}).$ Since $\delta_n|_{AD(\mathcal{D})}=\partial_{AD}$, it follows that $\delta_{AD}|_{AD(\mathcal{D})}=\partial_{AD}$, which completes the proof.
\end{proof}

Next, we show  that the noncommutative approximate derivative $\delta_{AD}(\mathcal{R})$ is not spatial.

\begin{prop}\label{noninner}
\label{p3} Let $\delta_{AD}:AD(\mathcal{R})\to S(\mathcal{R})$ be as in Theorem \ref{b main thm}.
There is no $a\in S(\mathcal{R})$ such that $\delta_{AD}(x)=[a,x]$ for all  $x\in AD(\mathcal{R})$.
\end{prop}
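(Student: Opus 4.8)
The plan is to reduce the claim to the commutator obstruction of Theorem~\ref{com}(c): in a finite von Neumann algebra the identity cannot be written as $[a,b]$ whenever one of $a,b$ is normal. Suppose, for contradiction, that $\delta_{AD}$ is spatial, say $\delta_{AD}(x)=[a,x]$ for some fixed $a\in S(\mathcal{R})$ and all $x\in AD(\mathcal{R})$. It then suffices to exhibit a single normal element $x\in AD(\mathcal{R})$ with $\delta_{AD}(x)=\mathbf{1}$: for such an $x$ we would have $[a,x]=\mathbf{1}$, equivalently $[x,-a]=\mathbf{1}$, with $x$ normal and $\mathcal{R}$ finite, which is impossible by Theorem~\ref{com}(c).

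To produce such an $x$, take the class of the identity function $t\mapsto t$ on $[0,1]$, which under the identification of Section~\ref{nc_approx_differentiability} is an element of $AD(\mathcal{D})=AD[0,1]$. Since $\mathcal{R}_0=\mathbb{C}$, the algebra $\mathcal{A}_0=AD(\mathcal{R}_0)$ of Definition~\ref{def_AD_R} coincides with $AD(\mathcal{D})$, so $x\in\bigcup_{n\geq0}\mathcal{A}_n\subseteq AD(\mathcal{R})$. The function $t\mapsto t$ is real-valued, hence $x=x^{*}$ and $x$ is in particular normal; it moreover lies in $D[0,1]$, being differentiable everywhere with derivative the constant function $1$. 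By Theorem~\ref{b main thm} we have $\delta_{AD}|_{AD(\mathcal{D})}=\partial_{AD}$, and by Proposition~\ref{prop_partial_AD_nonexpansive} the derivation $\partial_{AD}$ restricts to the classical derivation $\partial$ of Definition~\ref{def_classical_der} on $D[0,1]$. Therefore $\delta_{AD}(x)=\partial_{AD}(x)=\partial(x)=\mathbf{1}$, the identity of $S(\mathcal{R})$.

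Combining the two paragraphs completes the argument: under the assumed spatial implementation we get $[x,-a]=\delta_{AD}(x)=\mathbf{1}$ with $x$ normal, contradicting Theorem~\ref{com}(c); hence no implementing $a$ exists. I do not expect a genuine obstacle here, since the substance has already been supplied by Theorems~\ref{com} and~\ref{b main thm}: the only points needing attention are the routine verifications that the chosen $x$ lies in $AD(\mathcal{R})$ and is normal, together with the bookkeeping that identifies $\delta_{AD}(x)$ with the unit of $\mathcal{R}$ through the chain $\delta_{AD}\supseteq\partial_{AD}\supseteq\partial$. In effect the proposition is a corollary of Theorem~\ref{com}(c) once the element $x$ is in hand.
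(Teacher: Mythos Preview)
Your proof is correct, but it takes a genuinely different route from the paper's.

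The paper does not invoke Theorem~\ref{com} here. Instead, it argues that if $\delta_{AD}=[a,\cdot]$ for some $a\in S(\mathcal{R})$, then $\delta_{AD}$ is continuous in the \emph{measure} topology (because multiplication in $S(\mathcal{R})$ is measure-continuous). Restricting to $AD(\mathcal{D})=AD[0,1]$, this forces $\partial_{AD}$ to be measure-continuous. But $\partial_{AD}$ vanishes on all projections of $L_\infty[0,1]\subset AD[0,1]$, and every element of $AD[0,1]$ is a measure-limit of simple functions; hence $\partial_{AD}\equiv 0$, contradicting $\partial_{AD}([t\mapsto t])=\mathbf{1}$.

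Your approach exhibits the single witness $x=[t\mapsto t]$ and feeds $[x,-a]=\mathbf{1}$ directly into Theorem~\ref{com}(c). This is shorter and entirely rigorous once Theorem~\ref{com} is available, and it makes the obstruction explicit. The paper's argument, by contrast, avoids the (nontrivial) Heisenberg-relation result and instead isolates a structural principle of independent interest: any derivation on $AD(\mathcal{D})$ that is continuous in measure and kills projections must be identically zero. So your route is a clean corollary of Theorem~\ref{com}(c), while the paper's route is more self-contained and yields a slightly more general continuity statement along the way.
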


\begin{proof}
Suppose, by contradiction,  that there exists  an element $a\in S(\mathcal{R})$ such that $\delta_{AD}(x)=[a,x]$ for all  $x\in\mathcal{B}.$ Since $S(\mathcal{R})$ equipped with the measure topology is a topological $*$-algebra it follows that,  in particular,
 $\delta_{AD}$ is continuous with respect to the measure topology.  Since
$\delta_{AD}|_{AD(\mathcal{D})}=\partial_{AD},$    it follows that $\partial_{AD}$ is also continuous in the measure topology. Furthermore, the algebra $AD(\mathcal{D})=AD[0,1]$ contains all projections from $\mathcal{D}=L_\infty[0,1]$ (see Proposition  \ref{ad01}) and is, therefore, contained in the closure in measure topology of the set of all linear combinations of all projections. Since $\partial_{AD}$ vanishes on projections  and is continuous in the measure topology, it follows that  $\partial_{AD}=0$,
 and therefore, $\delta_n=0$. Hence, $\delta=0,$  which is a contradiction.
\end{proof}

\begin{rem}
 We note that $AD(\mathcal{R})$ contains a regular ring of continuous geometry for $\mathbb{C},$ namely,
$\mathcal{R}_\infty=\overline{\bigcup_{n\geq0}\mathcal{R}_n}^{\rho}$ is a continuous geometry
 by \cite[Theorems~D~and~E]{Neu}. Furthermore, the ring   $\mathcal{R}_\infty$ is a proper subalgebra of
$AD(\mathcal{R}).$
Indeed, $\delta_{AD}|_{\mathcal{R}_\infty}=0,$ because  by Proposition~\ref{tower lemma} $\delta_{AD}|_{\mathcal{R}_n}=0$ for all
$n\geq 0$ and $\delta_{AD}$ is $\rho$-continuous by  Proposition \ref{delta continuity lemma}. On the other hand, $\delta_{AD}$ is non trivial. Thus
$\mathcal{R}_\infty$ is a proper subalgebra of
$AD(\mathcal{R}).$

\end{rem}

\section{An example of a derivation on a Cartan masa which does not extend to $S(\mathcal{M})$}\label{sec5}

In this section, we prove that there is a derivation on the algebra $\mathcal{D}$ with values in $S(\mathcal{D})$, which can not be extended up to a derivation on $S(\mathcal{R})$. Using Connes--Feldmann--Weiss theorem we also prove analogous result for any Cartan masa in a hyperfinite type II$_1$ factor $\mathcal{R}$.

Let, as before, $\mathcal{D}$ be the ``diagonal'' masa in $\mathcal{R}$ (see Section \ref{nc_approx_differentiability}).
 As before, we identify $\mathcal{D}=L_\infty[0,1]$ and view $S(\mathcal{D})=S[0,1]$ as a $*$-subalgebra of $S(\mathcal{R})$.

\begin{thm}\label{thm_nonext_diagonal}Let  $\delta:AD(\mathcal{D})\to S(\mathcal{D})$ be a derivation, such that
$\delta|_{AD[0,1/2]}=\partial_{AD}$ and $\delta|_{AD[1/2,1]}=-\partial_{AD}$. The derivation $\delta$ cannot be extended up to a derivation from $S(\mathcal{R})$ to $S(\mathcal{R})$.

\end{thm}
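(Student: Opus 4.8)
The plan is to argue by contradiction using Theorem~\ref{com}(c). Suppose $D\colon S(\mathcal{R})\to S(\mathcal{R})$ is a derivation with $D|_{AD(\mathcal{D})}=\delta$. I would first reformulate $\delta$: writing $p=\chi_{[0,1/2]}$, $q=\chi_{[1/2,1]}$ and $\varepsilon=p-q\in\mathcal{D}$ (a self\-adjoint unitary), the decomposition $AD(\mathcal{D})=p\,AD(\mathcal{D})\oplus q\,AD(\mathcal{D})$ together with $\partial_{AD}(pf)=p\,\partial_{AD}(f)$ shows that the hypothesis on $\delta$ is equivalent to $\delta=\varepsilon\,\partial_{AD}$ on $AD(\mathcal{D})$. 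In particular, for the functions $g(t)=t$ and $a:=\varepsilon g+\tfrac14\mathbf{1}$, which lie in $AD(\mathcal{D})$, one has $D(\varepsilon)=\delta(\varepsilon)=0$, $D(g)=\delta(g)=\varepsilon$, $D(\mathbf{1})=0$, and hence $D(a)=D(\varepsilon)g+\varepsilon D(g)=\varepsilon^{2}=\mathbf{1}$.

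Next I would bring in the ``flip'' unitary. Let $u\in\Pi_{1}\subset\mathcal{R}_{1}$ be the non\-trivial permutation matrix of $\mathcal{R}_{1}$, so $u=u^{*}=u^{-1}$ and $u$ interchanges $e_{00}=p$ and $e_{11}=q$. By Proposition~\ref{permutation} (or directly, from the binary expansion $g=\sum_{k\ge 1}2^{-k}(\mathbf{1}-\chi_{X_{k}})$, using that $u$ fixes $\chi_{X_{k}}$ for $k\ge 2$ and sends $\chi_{X_{1}}\mapsto\mathbf{1}-\chi_{X_{1}}$), conjugation by $u$ implements the translation $t\mapsto t+\tfrac12\ (\mathrm{mod}\ 1)$ on $\mathcal{D}$; thus $upu=q$, so $u\varepsilon u=-\varepsilon$, and $ugu=g+\tfrac12\varepsilon$. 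Therefore $uau=(u\varepsilon u)(ugu)+\tfrac14\mathbf{1}=-\varepsilon\bigl(g+\tfrac12\varepsilon\bigr)+\tfrac14\mathbf{1}=-a$, i.e.\ $ua=-au$. Set $w:=D(u)$. Differentiating $u^{2}=\mathbf{1}$ gives $uw+wu=0$, and expanding $D(au)=-D(ua)$ (valid since $au=-ua$) together with $D(a)=\mathbf{1}$ yields $aw+wa=-2u$. Now left\-multiply this last identity by $u$ and move the factor $u$ inward using $ua=-au$ and $uw=-wu$: the two terms rearrange to $w(au)-(au)w=-2\mathbf{1}$, that is $[au,w]=2\mathbf{1}$. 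Finally, since $a=a^{*}$ and $u=u^{*}$ anticommute, $(au)^{*}=ua=-au$, so $c:=\tfrac12\,au$ is skew\-adjoint, hence normal; thus $\mathbf{1}=[c,w]$ with $c\in S(\mathcal{R})$ normal, contradicting Theorem~\ref{com}(c). Hence no such $D$ exists.

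The point where one cannot just compute inside $AD(\mathcal{D})$ — and hence the main obstacle — is that the value $w=D(u)$ of a hypothetical extension on the flip unitary is entirely unconstrained by $\delta$. The twist is what lets us get around this: it forces $D(\varepsilon g)=\mathbf{1}$ even though $\varepsilon g$ is only self\-adjoint, and the same twist makes conjugation by $u$ negate $\varepsilon$ and shift $g$ by $\tfrac12\varepsilon$, so that after the harmless translate $a=\varepsilon g+\tfrac14\mathbf{1}$ one gets a genuine anticommutation $ua=-au$. Those two features are precisely what is needed to fold the unknown $w$ into a commutator with the \emph{normal} element $\tfrac12\,au$. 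In a full write\-up the care points are the bookkeeping in the last rearrangement (moving $u$ past $a$ and $w$ by the two anticommutation relations) and verifying that $au$ is genuinely skew\-adjoint, which is exactly where the normality hypothesis of Theorem~\ref{com}(c) enters.
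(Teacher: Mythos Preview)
Your argument is correct and is essentially the paper's own proof: the element $a=\varepsilon g+\tfrac14$ is exactly the paper's $x_0$, the flip unitary $u$ and the anticommutation relations $ua=-au$, $uD(u)+D(u)u=0$ are used identically, and the contradiction with Theorem~\ref{com}(c) is obtained in the same way. The only cosmetic difference is that you package the final commutator as $[\tfrac12 au,\,D(u)]=\mathbf{1}$ with $au$ skew-adjoint, whereas the paper writes it as $[uD(u),\,x_0]=-2$ with $x_0$ self-adjoint.
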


\begin{proof}

 Let    $\pi: L_\infty[0,1]\longrightarrow\mathcal{D}$ be  a $\ast$-isomorphism defined in Section~\ref{nc_approx_differentiability}.

Let $\partial_{AD}:AD[0,1]\to S[0,1]$ be the approximate derivation. By Proposition \ref{prop_partial_AD_nonexpansive} this derivation is non-expansive. Therefore, by \cite[Theorem 3.1]{BSCh06}  there exists a derivation $\delta_0:S[0,1]\to S[0,1]$, which extends $\partial_{AD}$.

Denote for brevity, the first Rademacher function by $r=\chi_{(0,\frac12)}-\chi_{(\frac12,1)}$ and consider the mapping $\tilde{\delta}:S[0,1]\to S[0,1]$ defined by
$$\tilde{\delta}(x)=\delta_0(xr),\quad x\in S[0,1].$$
Since $\delta_0(r)=\partial_{AD}(r^2)=0,$ it follows that $\tilde{\delta}$ is a derivation on $S[0,1]$.

We set
$$\delta=\pi\circ\tilde{\delta}\circ\pi^{-1}: S(\mathcal{D})\longrightarrow S(\mathcal{D}).$$
We claim that $\delta$ is a derivation, which cannot be extended up to a derivation from $\mathcal{R}$ to $S(\mathcal{R})$.
Assume, by contradiction, that the derivation $\delta$ extends up to a derivation $D:\mathcal{R}\to S(\mathcal{R}).$

Consider the automorphisms $\gamma\in Aut(L_\infty[0,1])$ defined by setting
$$
(\gamma(x))(t)=x(\{t+1/2\}),\quad x\in L_\infty[0,1],
$$
where  $\{t\}$ is the fractional part of a number $t\in\mathbb{R}.$
Since $\gamma(\chi_{_{X_1}})=1-\chi_{_{X_1}}$ and $\gamma(\chi_{_{X_k}})=\chi_{_{X_k}}$ for  $k>1,$ we obtain that
$$
\pi(\gamma(x))=u\pi(x)u,\quad x\in S[0,1],
$$
where $$u=\begin{pmatrix}0&1\\1&0\end{pmatrix}\otimes  \left(\bigotimes\limits_{i=2}^\infty 1_i\right).$$

Let $f\in L_\infty[0,1]$ be given by $f(t)=t$.
Define the self-adjoint element $x_0\in \mathcal{D}$ by setting
$$x_0=\frac14+\pi(f\cdot r).$$
It is clear that,
$$\delta(x_0)=\pi(\tilde{\delta}(f\cdot r))=\pi(\delta_0(f))=1$$ and
$\gamma(x_0)=-x_0,$ because $\gamma(f)=f+\frac{1}{2}\chi_{(0,\frac12)}-\frac{1}{2}\chi_{(\frac12, 1)}$ and $\gamma(r)=-r.$
In particular, we have $ux_0u=-x_0$ and $ux_0=-x_0u.$

By the Leibniz rule, we have
$$D(ux_0u)=D(u)x_0u+uD(x_0)u+ux_0D(u)=D(u)x_0u+1+ux_0D(u).$$
Since $ux_0u=-x_0,$ it follows that
$$-1=D(-x_0)=-D(ux_0u)=D(u)x_0u+1+ux_0D(u).$$
Taking into account that $ux_0=-x_0u,$ we obtain
$$-2=-D(u)ux_0-x_0uD(u).$$
Since
$$D(u)u+uD(u)=D(uu)=D(1)=0,$$
it follows that
$$-2=uD(u)x_0-x_0uD(u)=[uD(u),x_0].$$
Since $x_0$ is self-adjoint, the latter equality contradicts Theorem~\ref{com}~(c).

Thus, $\delta:AD(\mathcal{D})\to S(\mathcal{D})$ cannot be extended up to a derivation from $S(\mathcal{R})$ to $S(\mathcal{R})$.
\end{proof}

We now prove a result similar to Theorem \ref{thm_nonext_diagonal} for an arbitrary Cartan masa in $\mathcal{R}$.

\begin{thm}\label{nonext thm} Let $\mathcal{A}$ be a Cartan masa in the hyperfinite II$_1-$factor $\mathcal{R}.$ There exists a derivation $\delta:\mathcal{A}\to S(\mathcal{A})$ which cannot be extended as a derivation from $\mathcal{R}$ to $S(\mathcal{R}).$
\end{thm}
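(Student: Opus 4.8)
The plan is to reduce the case of an arbitrary Cartan masa $\mathcal{A}$ to the already-treated case of the diagonal masa $\mathcal{D}$ via the Connes--Feldman--Weiss theorem. Recall that CFW asserts that any two Cartan masas in the hyperfinite $\mathrm{II}_1$-factor $\mathcal{R}$ are conjugate by an automorphism of $\mathcal{R}$; that is, there exists $\theta\in\Aut(\mathcal{R})$ with $\theta(\mathcal{D})=\mathcal{A}$. Since $\theta$ preserves the trace (the trace on $\mathcal{R}$ is unique), Proposition~\ref{prop_trace_pres_isom} shows that $\theta$ extends to a $*$-isomorphism $\theta:S(\mathcal{R})\to S(\mathcal{R})$ which is continuous in the measure topology, and which restricts to a $*$-isomorphism $S(\mathcal{D})\to S(\mathcal{A})$.

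Next I would transport the derivation $\delta$ constructed in Theorem~\ref{thm_nonext_diagonal}. Set $\delta_{\mathcal{A}}=\theta\circ\delta\circ\theta^{-1}$ on $\theta(AD(\mathcal{D}))\subset\mathcal{A}$, or more precisely, first fix a representative: since $\delta$ is a derivation from $AD(\mathcal{D})$ (a $\rho$-dense regular subalgebra of $S(\mathcal{D})$) into $S(\mathcal{D})$, and we only need a derivation defined on all of $\mathcal{A}$, we may use the extension $\pi\circ\tilde\delta\circ\pi^{-1}$ of $\delta$ to all of $S(\mathcal{D})$ (which exists since $\partial_{AD}$ is non-expansive, via \cite[Theorem 3.1]{BSCh06}, exactly as in the proof of Theorem~\ref{thm_nonext_diagonal}); restricting this to $\mathcal{D}$ gives a derivation $\mathcal{D}\to S(\mathcal{D})$. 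Then $\delta_{\mathcal{A}}:=\theta\circ\delta\circ\theta^{-1}:\mathcal{A}\to S(\mathcal{A})$ is a well-defined derivation, because $\theta$ and $\theta^{-1}$ are $*$-isomorphisms intertwining the algebra structures.

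Finally I would argue the non-extendability by contradiction. Suppose $\delta_{\mathcal{A}}$ extended to a derivation $D_{\mathcal{A}}:\mathcal{R}\to S(\mathcal{R})$. Then $\theta^{-1}\circ D_{\mathcal{A}}\circ\theta:\mathcal{R}\to S(\mathcal{R})$ would be a derivation (again using that $\theta$ is a measure-continuous $*$-isomorphism of $S(\mathcal{R})$), and its restriction to $\mathcal{D}$ would equal $\theta^{-1}\circ D_{\mathcal{A}}|_{\mathcal{A}}\circ\theta=\theta^{-1}\circ\delta_{\mathcal{A}}\circ\theta=\delta$. This contradicts Theorem~\ref{thm_nonext_diagonal}, which says no such extension of $\delta$ exists. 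Hence $\delta_{\mathcal{A}}$ admits no extension to $\mathcal{R}$, proving the theorem.

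The main obstacle, and the only genuinely non-routine ingredient, is the invocation of the Connes--Feldman--Weiss theorem to obtain the automorphism $\theta$ conjugating $\mathcal{D}$ to $\mathcal{A}$; once this is in hand, everything else is a formal transport-of-structure argument, with Proposition~\ref{prop_trace_pres_isom} handling the passage from $\mathcal{R}$ to $S(\mathcal{R})$. One should take a little care that the derivation is genuinely defined on the \emph{whole} masa $\mathcal{A}$ (not merely on a dense subalgebra), which is why I pass through the $S[0,1]$-valued extension of $\delta$ before conjugating; but this is straightforward given the non-expansiveness of $\partial_{AD}$ established in Proposition~\ref{prop_partial_AD_nonexpansive}.
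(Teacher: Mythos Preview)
Your proposal is correct and follows essentially the same approach as the paper: invoke the Connes--Feldman--Weiss theorem to obtain $\theta\in\Aut(\mathcal{R})$ with $\theta(\mathcal{D})=\mathcal{A}$, extend $\theta$ to $S(\mathcal{R})$ via Proposition~\ref{prop_trace_pres_isom}, transport the derivation of Theorem~\ref{thm_nonext_diagonal} to $\mathcal{A}$, and derive a contradiction by conjugating back any putative extension. Your extra care in ensuring the derivation is defined on all of $\mathcal{A}$ (by first passing to the $S[0,1]$-valued extension before conjugating) is exactly what the paper does implicitly inside the proof of Theorem~\ref{thm_nonext_diagonal}.
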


\begin{proof}

By Connes--Feldmann-Weiss Theorem  \cite[Corollary 11]{CFW81}, there is an $*$-automorphism  $\alpha\in Aut(\mathcal{R})$ such that  $\alpha(\mathcal{D})=\mathcal{A}$.
Since any $*$-automorphism on $\mathcal{R}$ preserves the trace, it follows that the $*$-automorphism $\alpha$ uniquely extends to a continuous in the measure topology $*$-automorphism of the Murray--von Neumann algebra $S(\mathcal{R}),$ which we still denote by $\alpha $ (see Proposition \ref{prop_trace_pres_isom}).

%

Now, let $\delta:\mathcal{D}\to S(\mathcal{D})$ be the derivation as in Theorem \ref{thm_nonext_diagonal}. Then the mapping  $\alpha\circ\delta\circ\alpha^{-1}:\mathcal{A}\to S(\mathcal{A})$  is well-defined and is a derivation. If $\alpha\circ\delta\circ\alpha^{-1}$ extends  to a derivation $D$ from $\mathcal{R}$ into $S(\mathcal{R})$, then  a derivation $\alpha^{-1}\circ D\circ\alpha:\mathcal{R} \to S(\mathcal{R})$ is an extension of $\delta$, which is not possible. Thus $\delta$ cannot be extended up to a derivation from $\mathcal{R}$ to $S(\mathcal{R})$.
\end{proof}

%
%
%
%
%
%
%
%
%
%
%
%

\end{document}